\pgfplotsset{compat=1.15}
\tikzset{baseline={($ (current bounding box.west) - (0,1ex) $)}, auto}
\tikzset{vertex/.style={circle, inner sep=1.5pt, fill}, edge/.style={thick, line join=bevel}}
\definecolor{purple}{rgb}{.5, 0, .635} 
\definecolor{yellow}{rgb}{1, 0.851, 0.188}
\definecolor{blue}{rgb}{0.133, 0.745, 0.89}
\definecolor{darkblue}{rgb}{0.08, 0.29, 0.37}
\definecolor{red}{rgb}{0.996, 0.427, 0.451}
\definecolor{darkred}{rgb}{0.52, 0.14, 0.15}
\definecolor{green}{rgb}{0.278, 0.525, 0.357}
\definecolor{darkgreen}{rgb}{0.15, 0.26, 0.16}
\newcolumntype{C}{>{\centering\arraybackslash}X}
\DeclarePairedDelimiter\norm{\lVert}{\rVert}%
\newcommand{\C}{\mathbb C}
\newcommand{\R}{\mathbb R}
\newcommand{\Z}{\mathbb Z}
\newcommand{\T}{\mathbb T}
\newcommand{\Sp}{\mathbb S}
\newcommand{\Sc}{\mathcal{S}}
\newcommand{\moon}[2]{\mathcal{P}(#1, #2)}
\newcommand{\sample}[2]{\mathcal{S}_{#1}(#2)}
\DeclareMathOperator{\MMT}{\mathcal{M}}
\theoremstyle{plain}
\newtheorem{thm}{Theorem}[section]
\newtheorem{lem}[thm]{Lemma}
\newtheorem{cor}[thm]{Corollary}
\newtheorem{prop}[thm]{Proposition}
\newtheorem{fact}[thm]{Fact}
\newtheorem{question}{Question}
\theoremstyle{definition}
\newtheorem{defn}[thm]{Definition}
\theoremstyle{remark}
\newtheorem{rem}[thm]{Remark}
\title{Curve Stitching and Dancing Planets}
\author{Frances Herr \\
\small University of Chicago \\
\small \tt herrf@uchicago.edu}
\date{}
\begin{document}

\maketitle

\begin{abstract}
    Curve stitching is a classic educational activity where one constructs elegant curves from a family of straight lines. We perform curve stitching around a circle to make a \emph{modular stitch graph}. Take $m$ points equally spaced around a circle, choose an integer multiplier $a$, and draw a chord from point $p$ to $a p \mod m$. What design will appear as the envelope of these chords? We connect these discrete objects to a continuous-time dynamical system and apply a topological perspective to understand the answer to this question.
\end{abstract}

\section{Modular Curve Stitching}\label{sec:mod-mult-tables}

Start by drawing a circle. Carefully place 12 evenly spaced points around the perimeter and label them $0$ to $11$. Now, add lines to the picture by following a simple rule: for each point $p$, draw a line between $p$ and $2p$, taking $2p$ modulo $12$ if needed. What design appears?

Increase the number of points around the circle and repeat the experiment; can you see the shape emerging? Figure $\ref{fig:2times}$ shows the result for $12$ points, $30$ points, and finally $100$ points. 

\begin{figure}[H]
    \centering
    \includegraphics[scale=0.1]{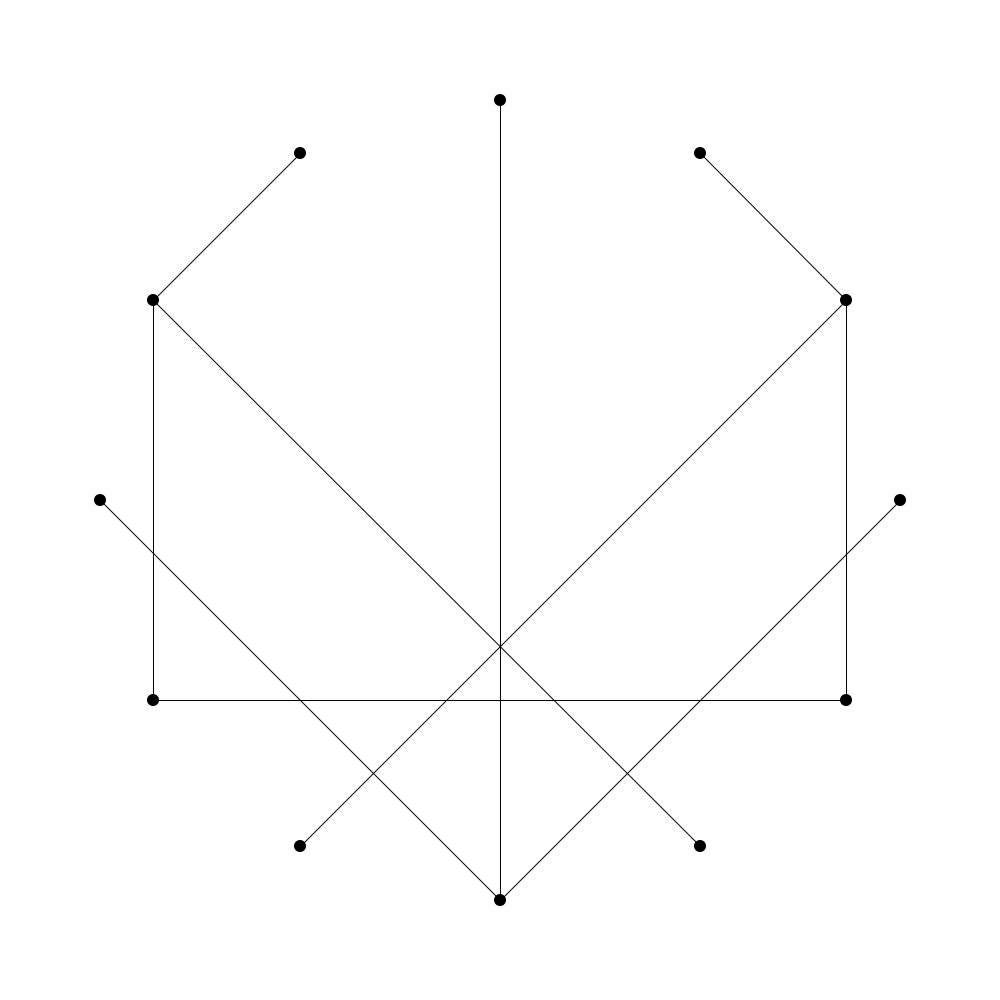}
    \includegraphics[scale=0.1]{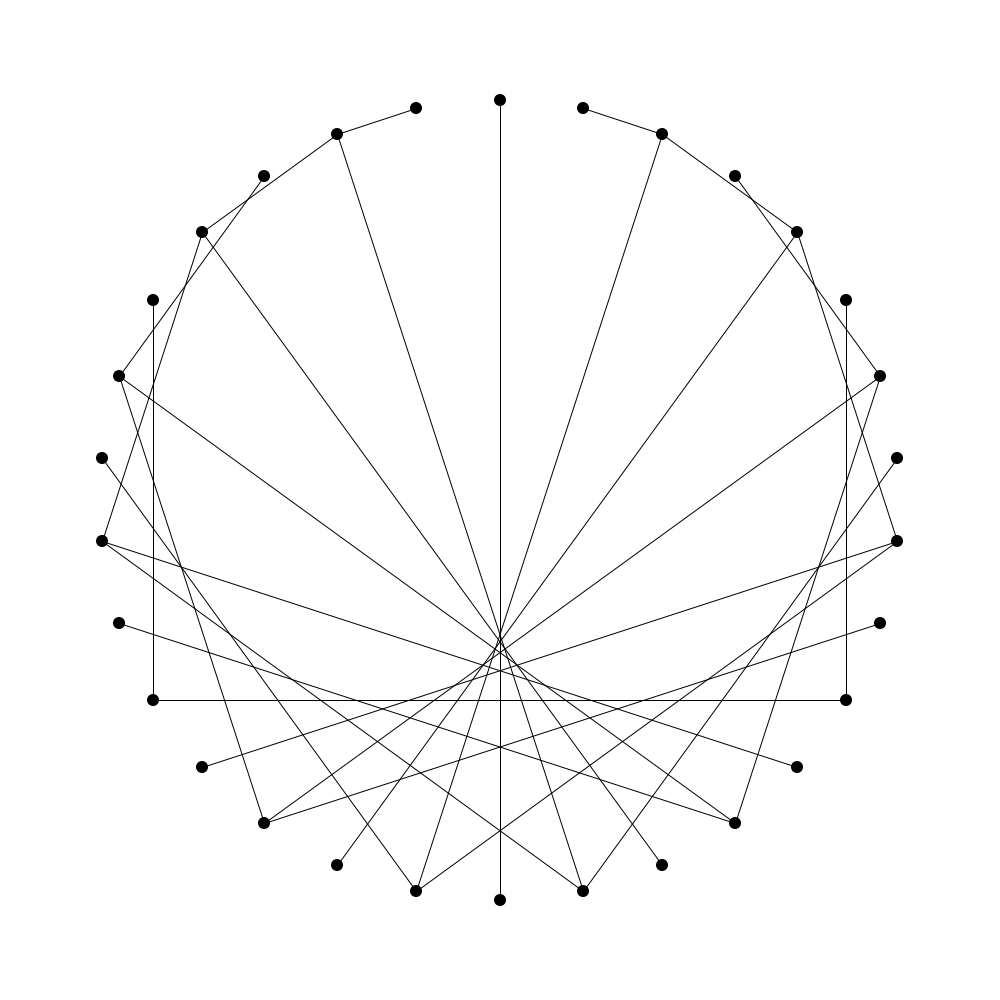}
    \includegraphics[scale=0.1]{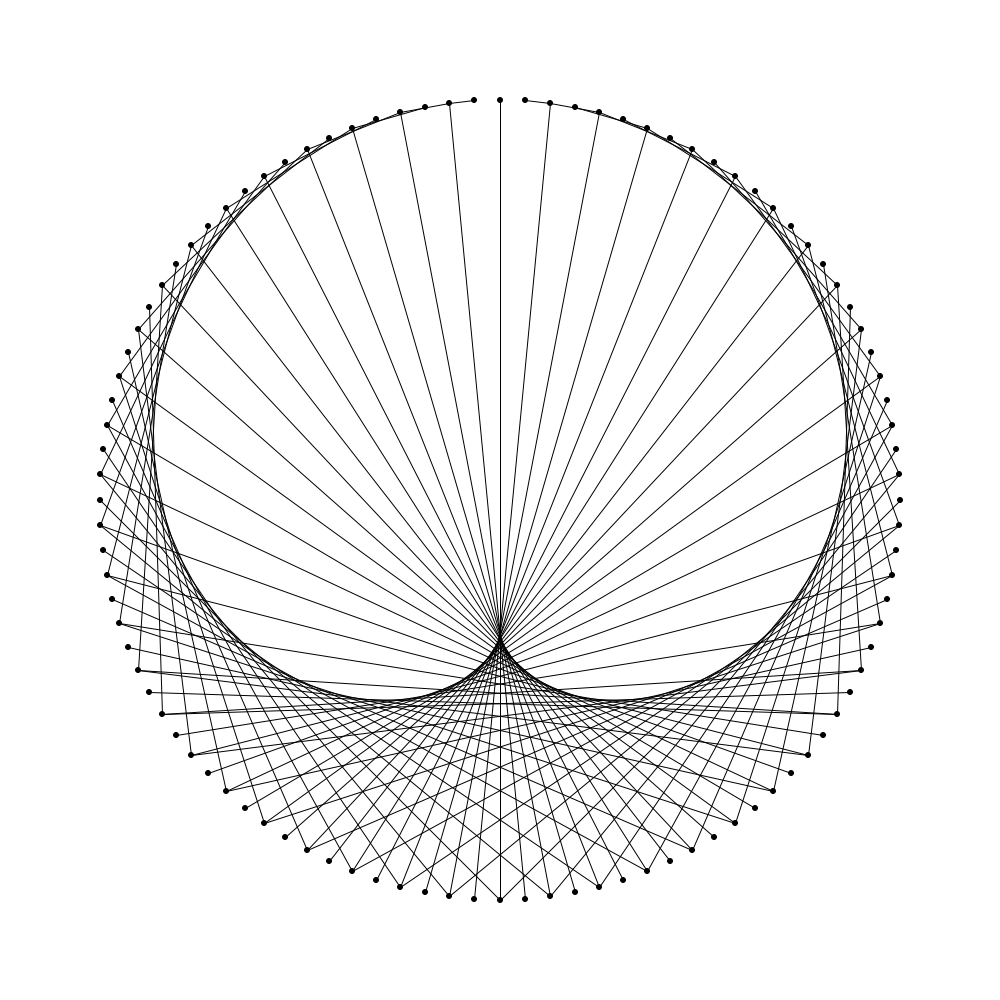}
    \caption{Increasing values for the modulus $m$ with multiplier $a = 2$. From left to right, $m = 12, \: 30, \: 100$.}
    \label{fig:2times}
\end{figure}

Mary Everest Boole experimented in this same way almost 200 years ago. Around 1840, a young Boole began lacing thread through holes in punched cards, a process she called \textit{curve stitching}.  In her 1909 book, \emph{Philosophy and Fun of Algebra} \cite{Boole}, she recommends this craft for teaching geometry to children. Thanks to Boole, in the 21$^{\text{st}}$ century geometry classroom, many children enjoy the same surprise that we saw in Figure \ref{fig:2times}: an elegant curve appears from carefully drawn straight lines!

This kind of drawing may already be familiar to the reader. Such objects appear throughout recreational mathematics and go by many names--- ``string art'', ``light caustics'', or ``spirographs'', to name a few. One popular appearance is in a YouTube video by the channel \emph{Mathologer}, ``Times Tables, Mandelbrot, and the Heart of Mathematics'' \cite{Polster}. We will call these objects \emph{modular stitch graphs} to honor Boole's influence in this history. 

Given a positive integer $m$ (the \emph{modulus}) and another integer $a$ (the \emph{multiplier}), we define the \emph{modular stitch graph} $\MMT(m,a)$ to be a discrete set of directed chords in the circle. For $m$ evenly spaced points around the circle labeled $0$ through $m-1$, connect $p$ to $ap \mod m$ with a directed chord. Then $\MMT(m,a)$ is the set of all these chords. Although $a$ can be any integer, note that $\MMT(m, a_1)$ and $\MMT(m, a_2)$ have the same set of chords if $a_1 \equiv a_2 \mod{m}$. Hence, we typically choose multipliers $a$ with $0 \leq a < m$.

\begin{quote}
    \textbf{Big Question.} \emph{Given values for $m$ and $a$, what design will $\MMT(m,a)$ create?}
\end{quote}

Before reading on, I encourage the reader to explore these objects for themselves. It only takes a little experimenting to stumble upon the zoo of designs showcased in Figure \ref{fig:examples-of-MMTs}. To generate these pictures, I recommend a tool by Mathias Lengler \cite{Lengler}. My own code to draw these objects is available online \cite{Herr}.

\begin{figure}[H]
     \centering
     \begin{subfigure}[b]{0.2\textwidth}
         \centering
         \includegraphics[width=\textwidth]{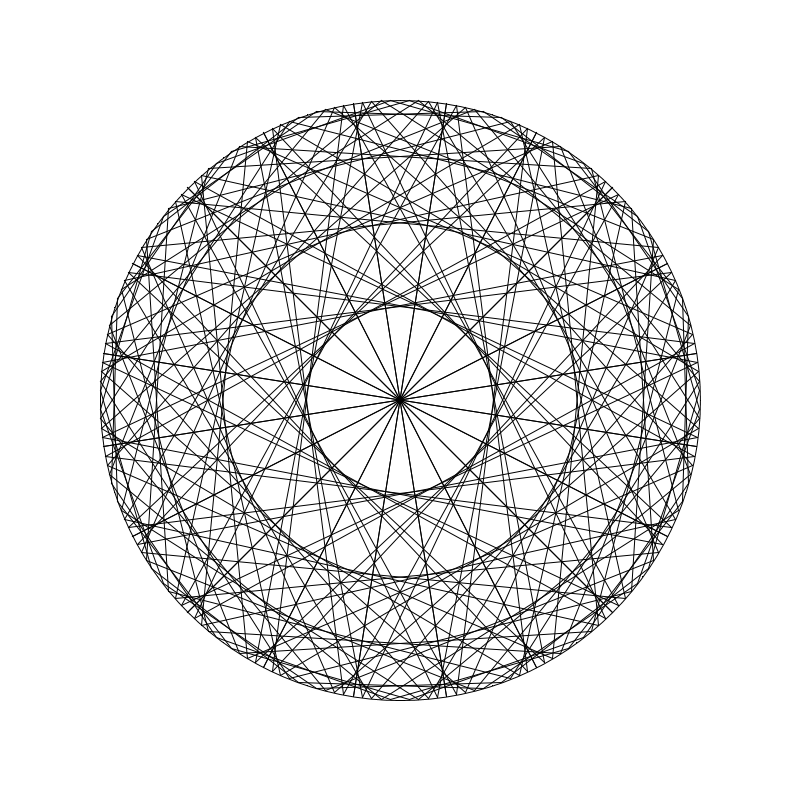}
         \caption{$\MMT(200,21)$}
         \label{fig:MMT-200-21}
     \end{subfigure}
    \hfill
     \begin{subfigure}[b]{0.2\textwidth}
         \centering
         \includegraphics[width=\textwidth]{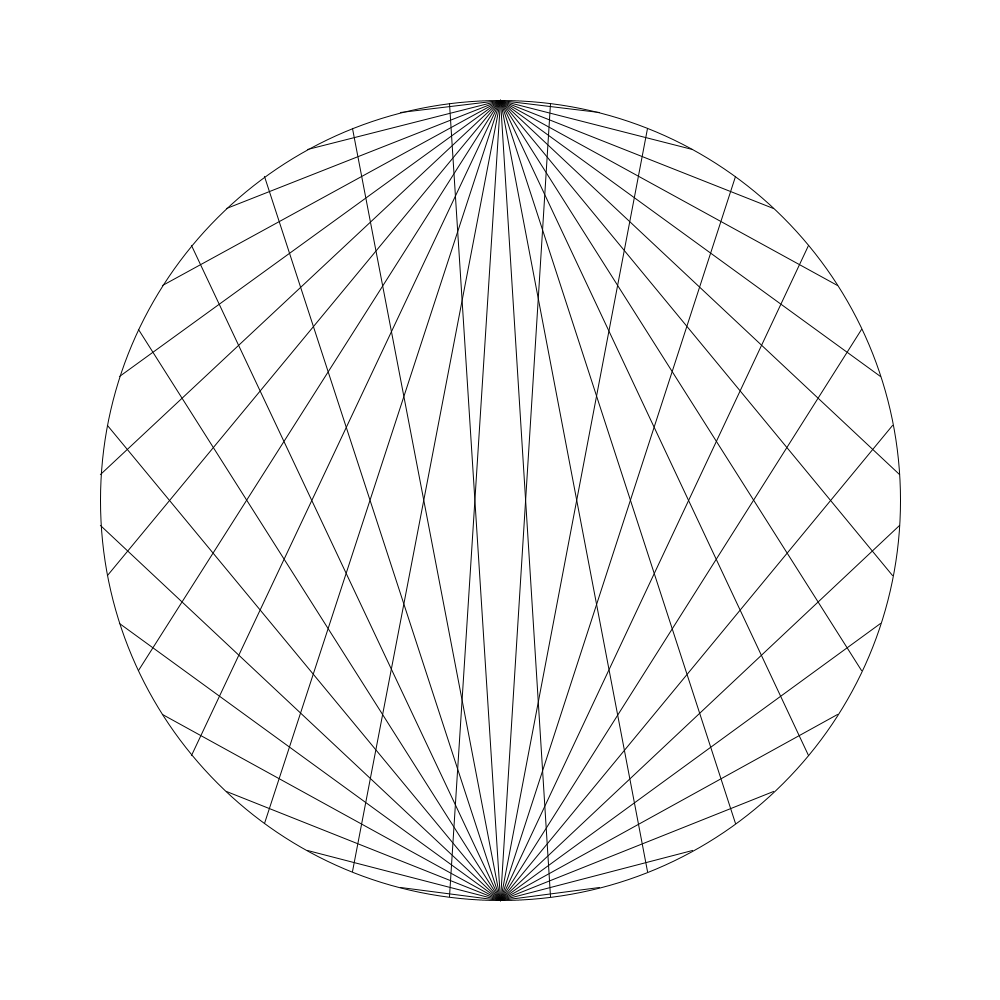}
         \caption{$\MMT(50,25)$}
         \label{fig:MMT-50-25}
     \end{subfigure}
    \hfill
     \begin{subfigure}[b]{0.2\textwidth}
         \centering
         \includegraphics[width=\textwidth]{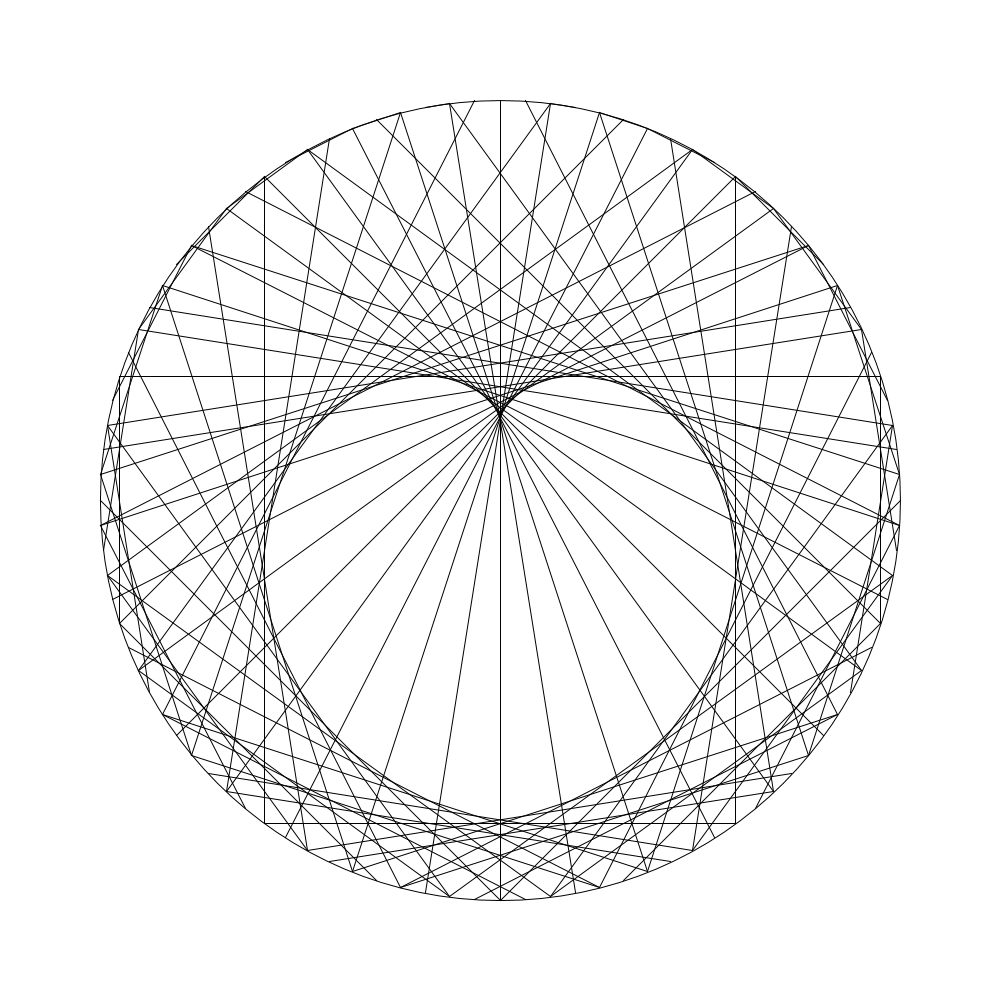}
         \caption{$\MMT(100,34)$}
         \label{fig:MMT-100-34}
     \end{subfigure}
     \hfill
     \begin{subfigure}[b]{0.2\textwidth}
         \centering
         \includegraphics[width=\textwidth]{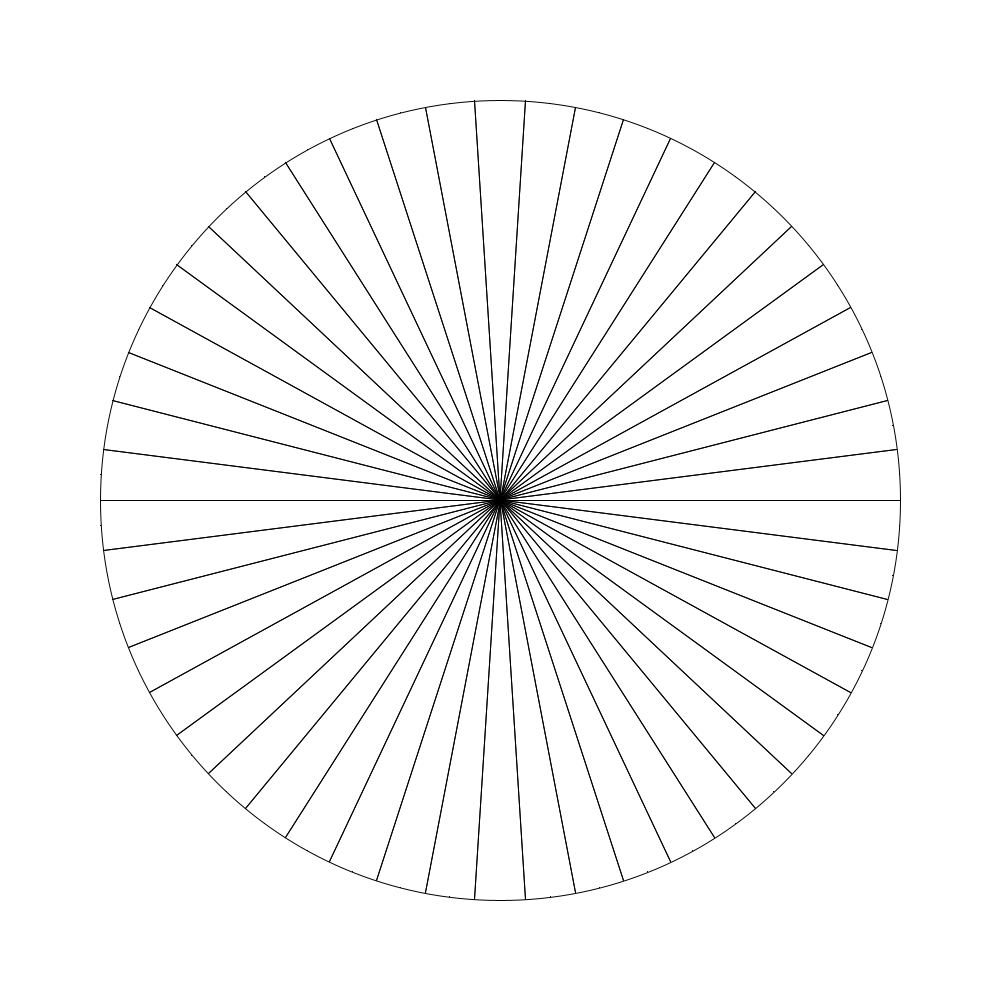}
         \caption{$\MMT(100, 51)$}
         \label{fig:MMT-100-51}
     \end{subfigure} \\
    \begin{subfigure}[b]{0.2\textwidth}
         \centering
         \includegraphics[width=\textwidth]{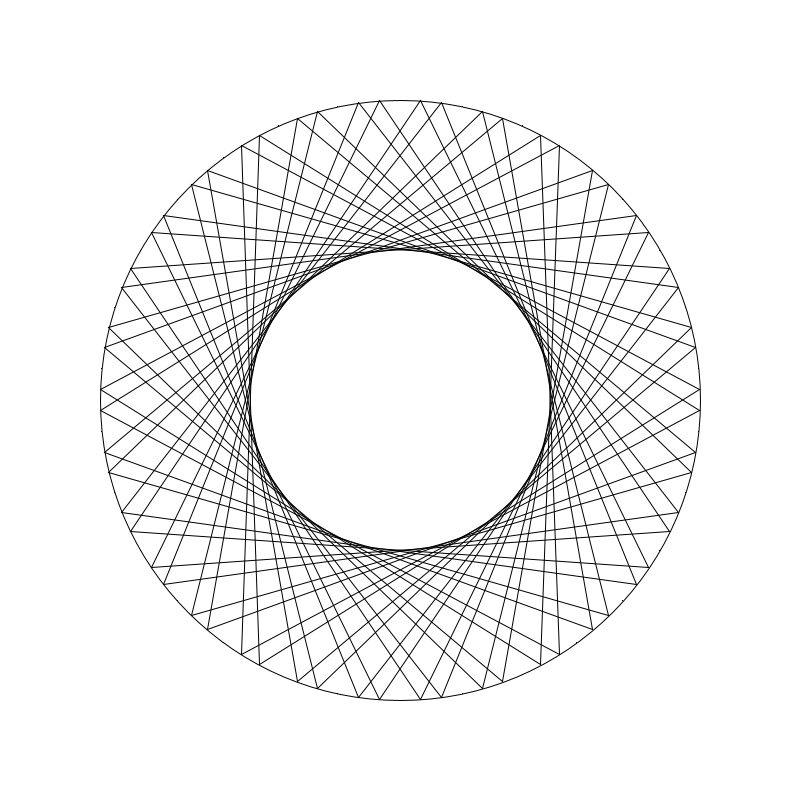}
         \caption{$\MMT(90,31)$}
         \label{fig:MMT-90-31}
     \end{subfigure}
     \hfill
     \begin{subfigure}[b]{0.2\textwidth}
         \centering
         \includegraphics[width=\textwidth]{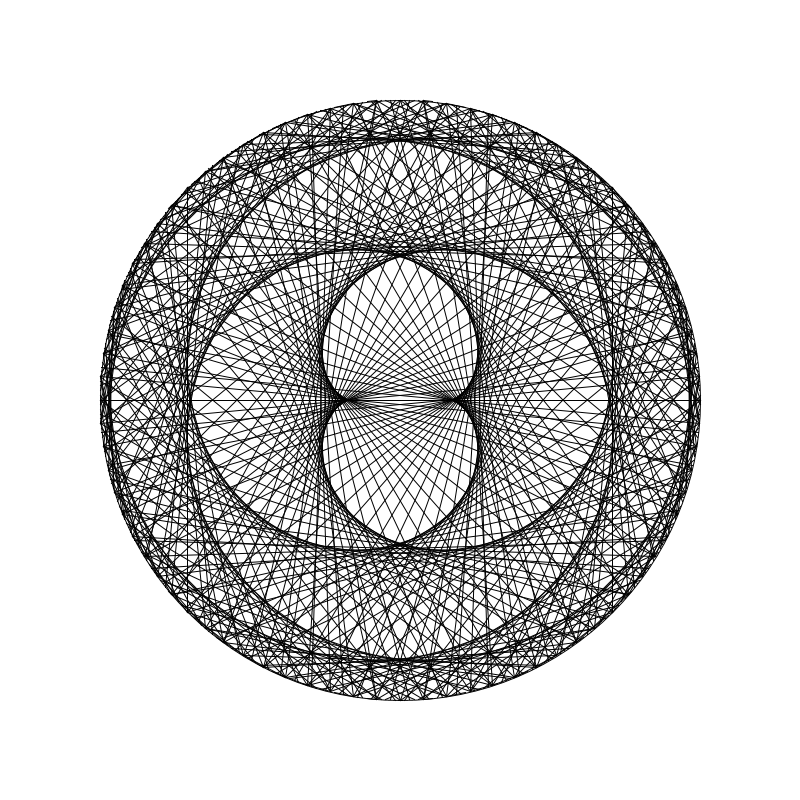}
         \caption{$\MMT(400,115)$}
         \label{fig:MMT-400-115}
    \end{subfigure}
    \hfill
     \begin{subfigure}[b]{0.2\textwidth}
         \centering
         \includegraphics[width=\textwidth]{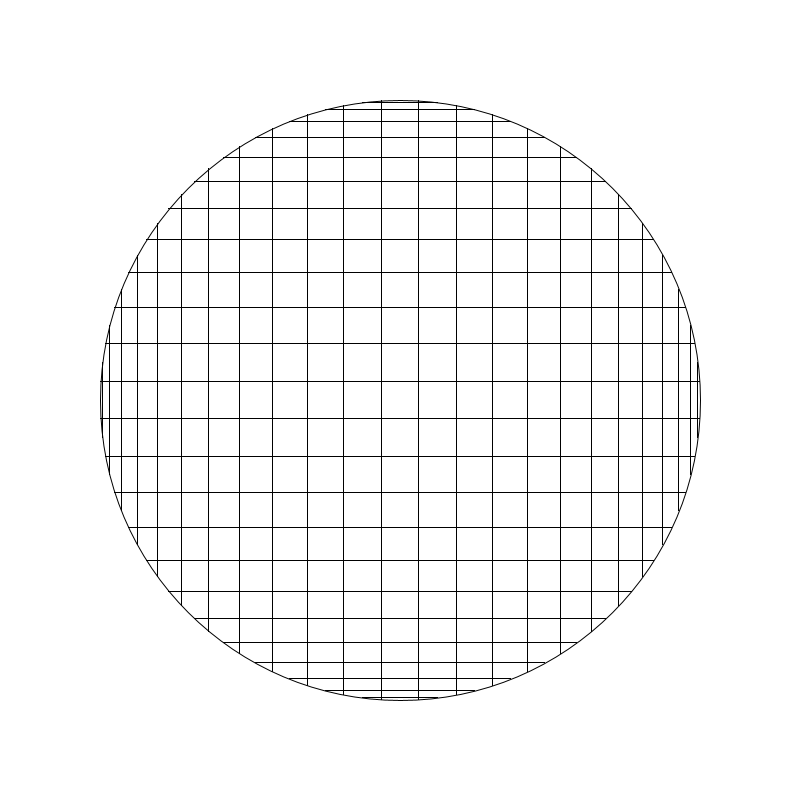}
         \caption{$\MMT(100,49)$}
         \label{fig:MMT-100-49}
     \end{subfigure}
    \hfill
         \begin{subfigure}[b]{0.2\textwidth}
         \centering
         \includegraphics[width=\textwidth]{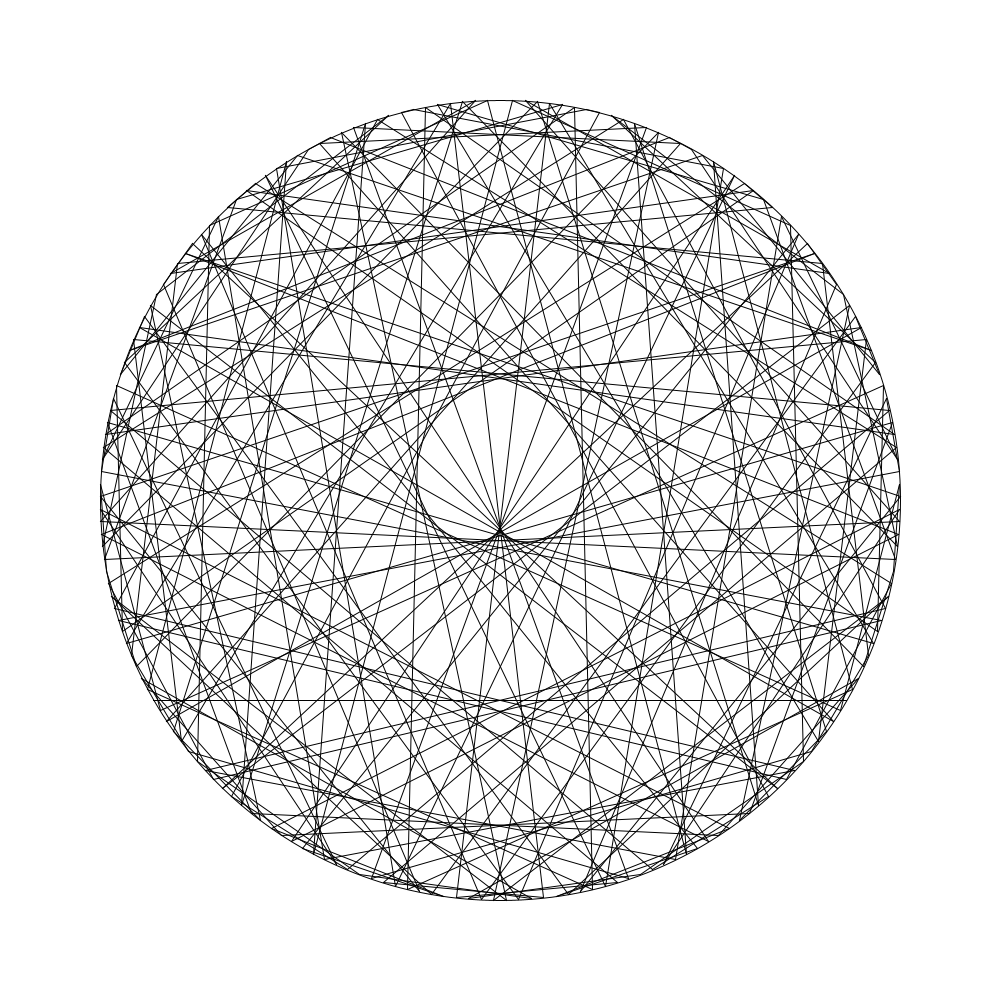}
         \caption{$\MMT(206, 21)$}
         \label{fig:MMT-206-21}
     \end{subfigure} 
        \caption{Modular stitch graphs with various values for the modulus $m$ and the multiplier $a$.}
        \label{fig:examples-of-MMTs}
\end{figure}

\subsection{Some first patterns}\label{sec:basic-patterns}

What do we mean by the ``design'' of a modular stitch graph? One candidate for a concrete mathematical notion is an \emph{envelope}.

\begin{defn}[Envelope]
    An \emph{envelope} of a collection of lines $L$ in the plane is a planar curve $C$ such that each $\ell \in L$ is tangent to $C$ at some point.
\end{defn}

By this definition, a collection of lines can possibly have many envelopes. Sometimes, an envelope curve for a modular stitch graph can strongly resemble its design. Other times, however, it is not a good representation of the graph's appearance. We can see this phenomenon with some examples.

Just like our first experiment, we will fix a small value for the multiplier $a$ and increase the modulus $m$. When $a = 2$, we see a heart-shaped curve called a \emph{cardioid} (Figure \ref{fig:2times}). This cardioid is an envelope of the chords in $\MMT(m, 2)$ for all $m$. Repeating the process with $a = 3, 4, 5, \dots$, we notice that for large modulus, $\MMT(m,a)$ resembles a curve with $a-1$ petals between $a-1$ cusps (Figure \ref{fig:fixed-m-increasing-a}). These curves are called \emph{epicycloids}; we will return to them in more detail in the next section.

\begin{figure}[H]
     \centering
      \begin{subfigure}[b]{0.2\textwidth}
         \centering
         \includegraphics[width=\textwidth]{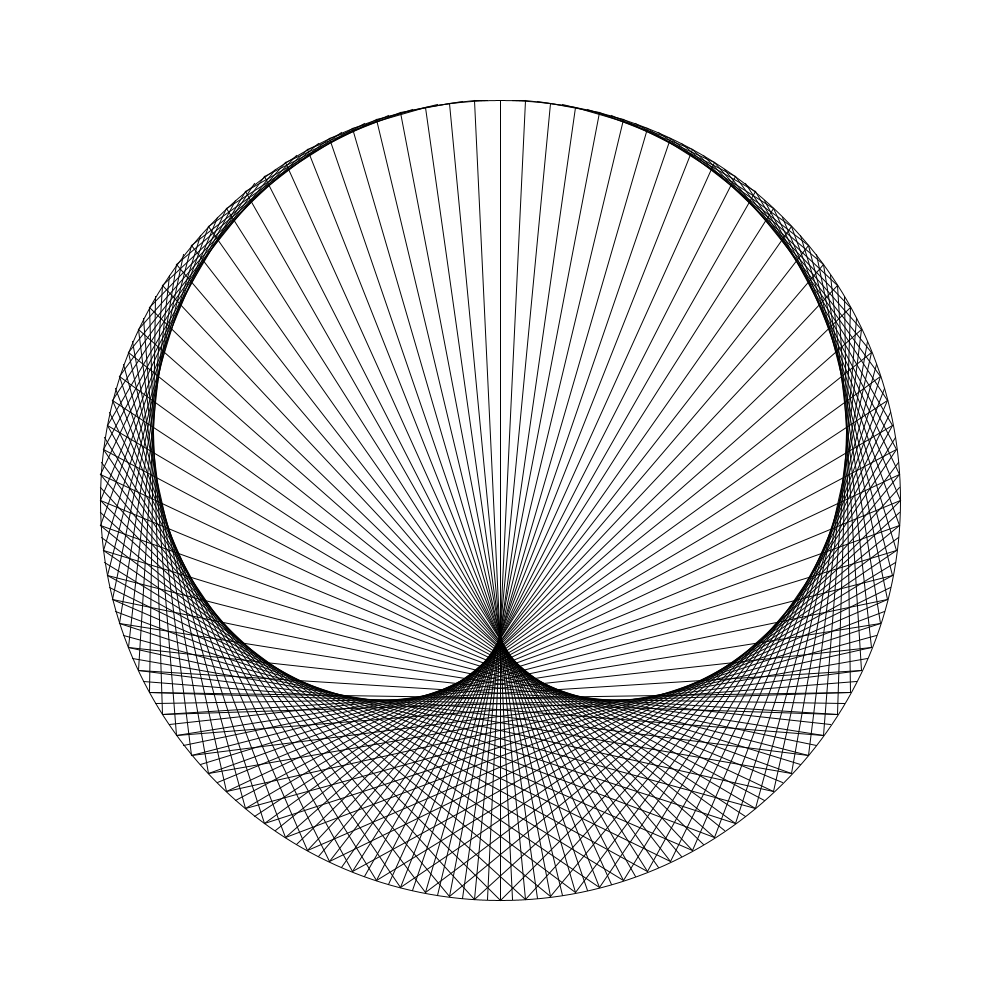}
         \caption{$\MMT(200,2)$}
         \label{fig:MMT-200-2}
     \end{subfigure}
     \begin{subfigure}[b]{0.2\textwidth}
         \centering
         \includegraphics[width=\textwidth]{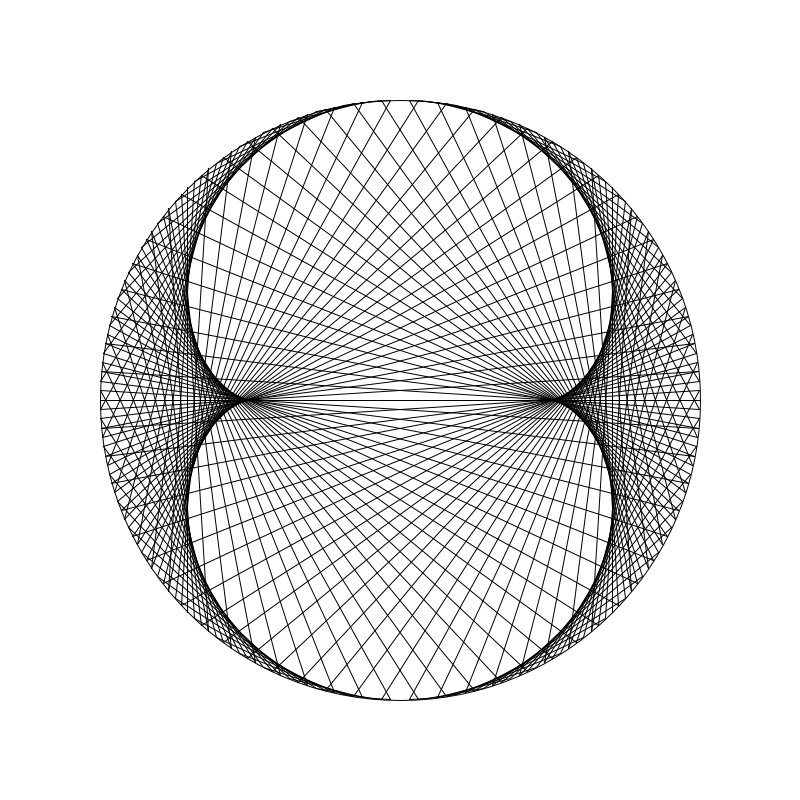}
         \caption{$\MMT(200,3)$}
         \label{fig:MMT-200-3}
     \end{subfigure}
     \begin{subfigure}[b]{0.2\textwidth}
         \centering
         \includegraphics[width=\textwidth]{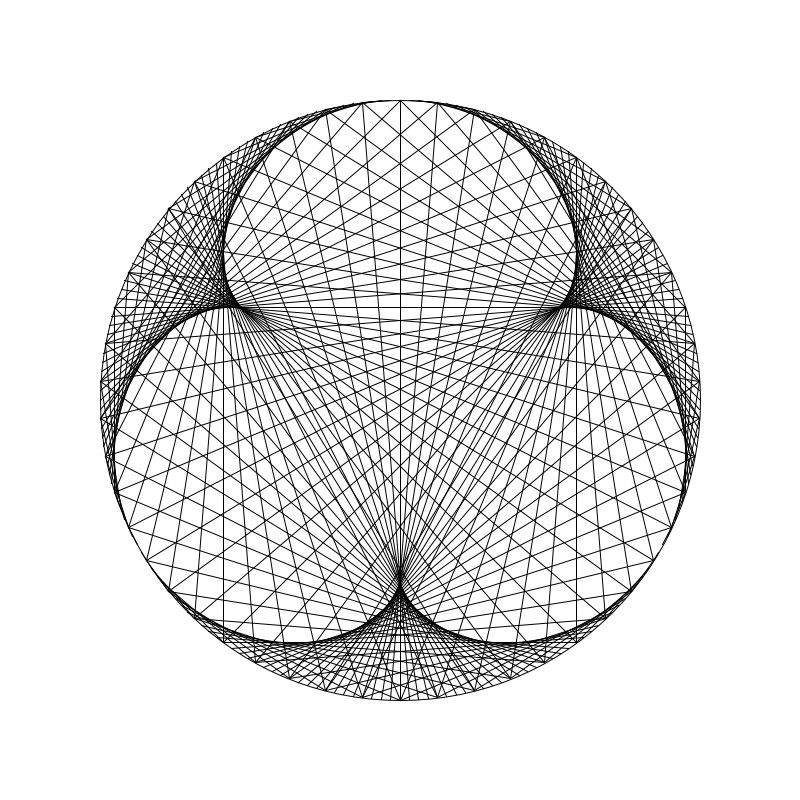}
         \caption{$\MMT(200,4)$}
         \label{fig:MMT-200-4}
     \end{subfigure}
     \\
     \begin{subfigure}[b]{0.2\textwidth}
         \centering
         \includegraphics[width=\textwidth]{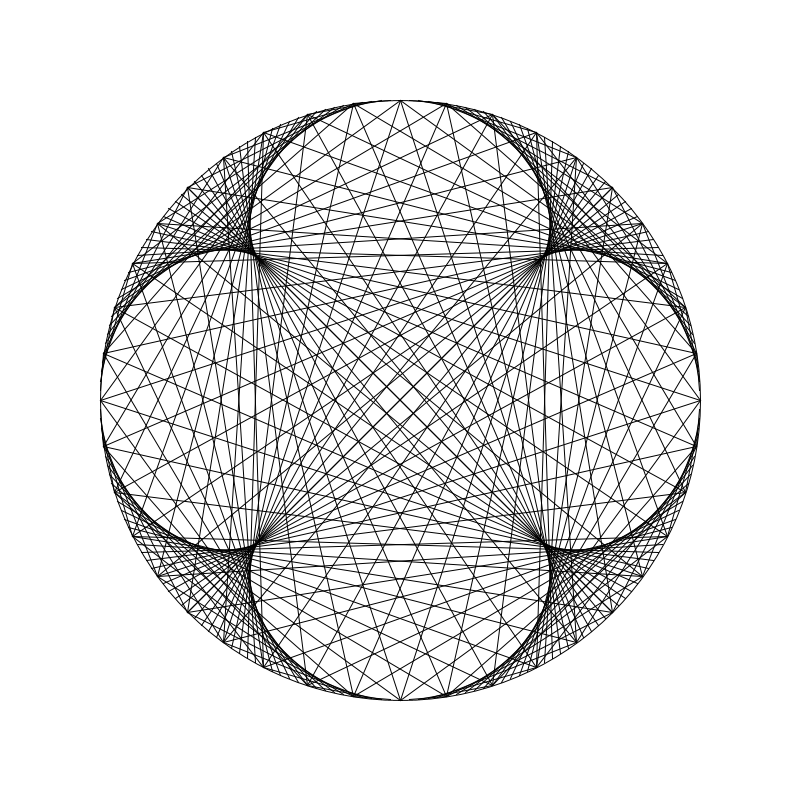}
         \caption{$\MMT(200,5)$}
         \label{fig:MMT-200-5}
     \end{subfigure}
     \begin{subfigure}[b]{0.2\textwidth}
         \centering
         \includegraphics[width=\textwidth]{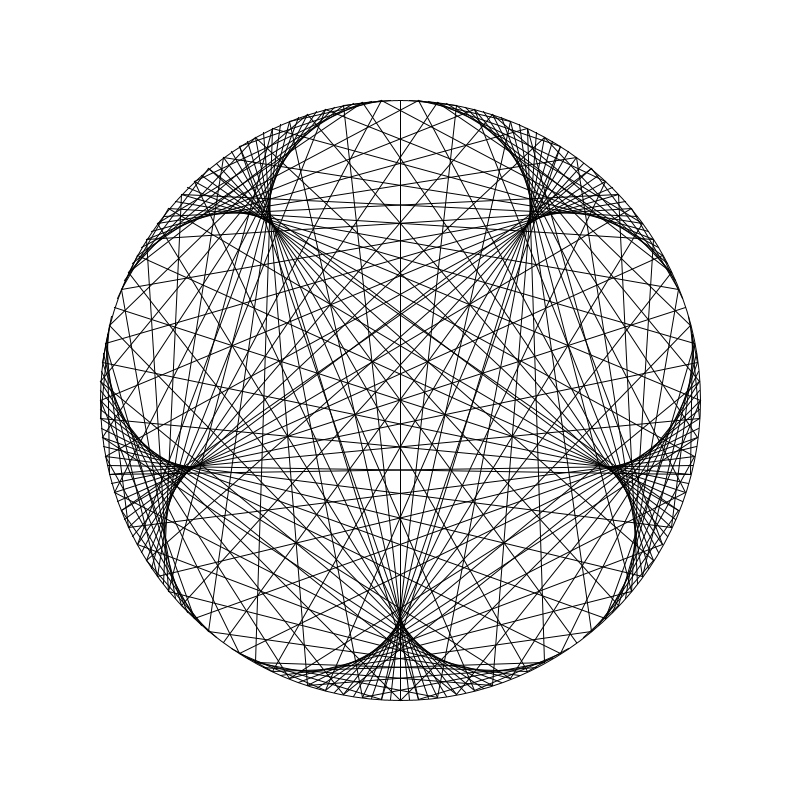}
         \caption{$\MMT(200,6)$}
         \label{fig:MMT-200-6}
     \end{subfigure}
     \begin{subfigure}[b]{0.2\textwidth}
         \centering
         \includegraphics[width=\textwidth]{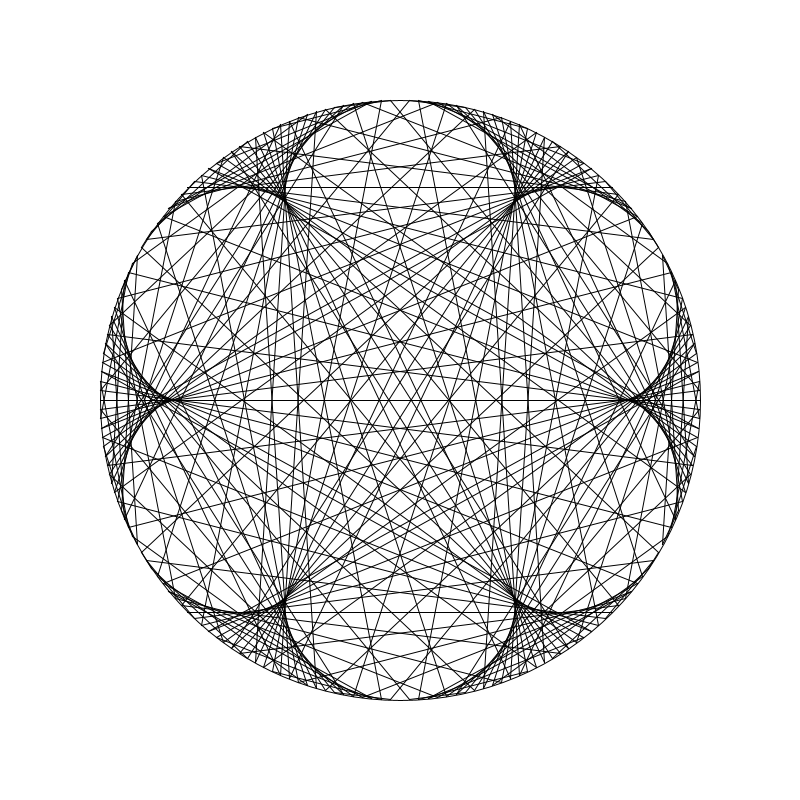}
         \caption{$\MMT(200,7)$}
         \label{fig:MMT-200-7}
     \end{subfigure}
        \caption{Modular stitch graphs with modulus $200$ and increasing multipliers.}
        \label{fig:fixed-m-increasing-a}
\end{figure}

\begin{fact}\label{fact:baby-envelopes}
     The epicycloid with $a -1$ petals is an envelope for the family of lines in $\MMT(m,a)$.
\end{fact}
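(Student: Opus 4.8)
The plan is to pass from the finite graph to a continuous one-parameter family of chords, compute the envelope of that family by the standard method, and then recognize the resulting curve as the advertised epicycloid. I place the $m$ marked points on the unit circle, so point $p$ sits at angle $2\pi p/m$; the chord of $\MMT(m,a)$ from $p$ to $ap \bmod m$ is then the segment joining $e^{it}$ to $e^{iat}$ with $t = 2\pi p/m$ (the reduction mod $m$ is invisible to the exponential). Every chord of $\MMT(m,a)$ therefore belongs to the continuous family $\{\ell_t\}_{t\in[0,2\pi)}$ of lines through $e^{it}$ and $e^{iat}$, where I take $a \ge 2$. Because a curve tangent to every member of $\{\ell_t\}$ is in particular tangent to the finitely many chords of $\MMT(m,a)$, it suffices to exhibit an envelope of the continuous family. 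Using the standard equation of a chord of the unit circle through angles $\alpha$ and $\beta$, namely $x\cos\frac{\alpha+\beta}{2} + y\sin\frac{\alpha+\beta}{2} = \cos\frac{\alpha-\beta}{2}$, the family is cut out by
\[
F(x,y,t) = x\cos\tfrac{(a+1)t}{2} + y\sin\tfrac{(a+1)t}{2} - \cos\tfrac{(a-1)t}{2} = 0.
\]

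Next I would find the envelope by solving $F = 0$ together with $\partial F/\partial t = 0$. Differentiating in $t$ yields a second linear equation in $(x,y)$, and the two equations share an orthogonal (rotation) coefficient matrix, so the system inverts immediately. After product-to-sum simplification the solution is the parametrized curve
\[
\gamma(t) = \left( \tfrac{a}{a+1}\cos t + \tfrac{1}{a+1}\cos at,\ \tfrac{a}{a+1}\sin t + \tfrac{1}{a+1}\sin at \right),
\]
or, in complex notation, $\gamma(t) = \frac{a}{a+1}e^{it} + \frac{1}{a+1}e^{iat}$.

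I would then verify two things about $\gamma$. First, that it really is an envelope: since $\gamma(t)$ is an affine combination of the two endpoints (its coefficients $\frac{a}{a+1}$ and $\frac{1}{a+1}$ sum to $1$), the point $\gamma(t)$ lies on $\ell_t$; and since $\gamma'(t) = \frac{ai}{a+1}(e^{it}+e^{iat})$ while $\ell_t$ has direction $e^{iat}-e^{it}$, a one-line computation gives $i(e^{it}+e^{iat})\,\overline{(e^{iat}-e^{it})} = 2\sin((a-1)t) \in \R$, so $\gamma'(t)$ is parallel to the chord. Hence $\ell_t$ is tangent to $\gamma$ at $\gamma(t)$, and by the reduction above every chord of $\MMT(m,a)$ is tangent to $\gamma$. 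Second, that $\gamma$ is the epicycloid with $a-1$ petals: the parameter shift $t \mapsto t + \tfrac{\pi}{a-1}$ followed by an overall rotation turns $\gamma$ into $\frac{a}{a+1}e^{is} - \frac{1}{a+1}e^{ias}$, which is the standard parametrization of an epicycloid traced by a circle of radius $\frac{1}{a+1}$ rolling outside a circle of radius $\frac{a-1}{a+1}$, a curve with exactly $a-1$ cusps.

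The envelope solve and the tangency check are routine; the step that takes the most care is the final identification. The derived parametrization carries a plus sign and the coefficients $\frac{a}{a+1},\frac{1}{a+1}$ rather than the $+R,-r$ of a textbook epicycloid, so I expect the main obstacle to be pinning down the precise rigid motion and reparametrization matching $\gamma$ to a standard epicycloid and confirming that the frequency ratio $a$ produces exactly $a-1$ cusps. Everything else follows mechanically.
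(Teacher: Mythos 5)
Your proof is correct, and it takes a genuinely different route from the paper, which in fact never proves Fact \ref{fact:baby-envelopes} directly: there the Fact is obtained by combining the Fundamental Correspondence (Lemma \ref{lem:fundamental-correspondence}, identifying $\MMT(m,a)$ with the $m$-sampling of the integral dance $\moon{1}{a}$) with Proposition \ref{prop:envelope-full-statement}, and Proposition \ref{prop:envelope-full-statement} itself is not proved in the paper but delegated to Simoson and Bouthillier. What you have done is prove, self-containedly, exactly the special case of that proposition needed here: your envelope $\gamma(t)=\frac{a}{a+1}e^{it}+\frac{1}{a+1}e^{iat}$ is precisely the curve of equations (\ref{eq:epicycloid}) with $(\alpha,\beta)=(a,1)$, and your closing rotation-and-reparametrization step moreover verifies an assertion the paper states without proof, namely that the plus-sign parametrization (\ref{eq:epicycloid}) coincides (up to rigid motion) with the textbook rolling-circle epicycloid $\frac{a}{a+1}e^{is}-\frac{1}{a+1}e^{ias}$ with radii $\frac{a-1}{a+1}$ and $\frac{1}{a+1}$ and exactly $a-1$ cusps. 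The trade-off: the paper's route covers all planet dances at once, including the negative/hypocycloid cases it needs in Sections 3--4, at the price of an external citation; your direct $F=F_t=0$ computation is elementary and makes the article's logical chain for this Fact complete on its own. Your intermediate computations check out ($\theta=\frac{(a+1)t}{2}$, $\phi=\frac{(a-1)t}{2}$ give $x=\cos\phi\cos\theta+\frac{a-1}{a+1}\sin\phi\sin\theta$ and its companion, which product-to-sum reduces to your $\gamma$; and $i(e^{it}+e^{iat})\overline{(e^{iat}-e^{it})}=2\sin((a-1)t)$ is right). Two small points worth a sentence in a final write-up: when $(a-1)t\equiv 0 \pmod{2\pi}$ (the fixed points $ap\equiv p \bmod m$) the chord degenerates to a point and $\ell_t$ is undefined, so those indices should be excluded or handled separately (the degenerate ``chord'' is the point $e^{it}=\gamma(t)$ on the curve); and at the cusps $(a-1)t\equiv\pi\pmod{2\pi}$ one has $\gamma'(t)=0$, so tangency of the (diametral) chord there must be read in the limiting sense, which your parallelism identity does justify since the one-sided tangent directions at the cusp align with the chord. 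Neither affects the validity of the argument.
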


This fact will follow from a more general statement given later in the paper (Proposition \ref{prop:envelope-full-statement}). Assuming this for now, we can ask a more specific version of the Big Question: \emph{For a large enough modulus $m$, is the epicycloid with $a-1$ petals always a good visual representation for the design of $\MMT(m,a)$?}

Experimenting with small multipliers (approximately less than $20$), we might be led to believe that the answer is ``yes''. But this is a hasty conclusion. Modular stitch graphs with higher multipliers introduce the variety of designs showcased in Figure~\ref{fig:examples-of-MMTs} which are much stranger than the basic curves in Figure \ref{fig:fixed-m-increasing-a}. We begin to suspect that the numerical relationship between $m$ and $a$ plays a key role in determining the design. 

\begin{question}\label{ques:families-of-tables}
    The three families of tables $\MMT(2a, a)$, $\MMT(2a -2, a)$, and $\MMT(2a + 2, a)$ each have a representative in Figure \ref{fig:examples-of-MMTs}. Can you explain why these designs appear?  
\end{question}

\begin{rem}
    Although it is often useful to think about modular stitch graphs as combinatorial objects, it will also be helpful for our purposes to define them geometrically. Parameterize the unit circle in $\C$ by $e^{2\pi it}$ for $t \in [0,1]$. Then $\MMT(m,a)$ is the set of chords with initial point $e^{2\pi i t_k}$ and terminal point $e^{2\pi iat_k}$ with $t_k = \frac{k}{m}$ for all $k \in \{0, \dots, m-1\}$.     
\end{rem}

\subsection{Envelopes}\label{sec:epicycloids-and-hypocycloids} 

An \emph{epicycloid} is a curve given by rolling one circle on the outside of another fixed circle and tracking a point on the boundary of the outer circle. To construct the epicycloid with $a -1$ petals, take the moving circle to have radius $\frac{1}{a+1}$ and the fixed circle to have radius $\frac{a -1}{a+1}$. See Figure \ref{fig:epicycloid-example}.

\begin{figure}[H]
    \centering
    \includegraphics[scale=0.4]{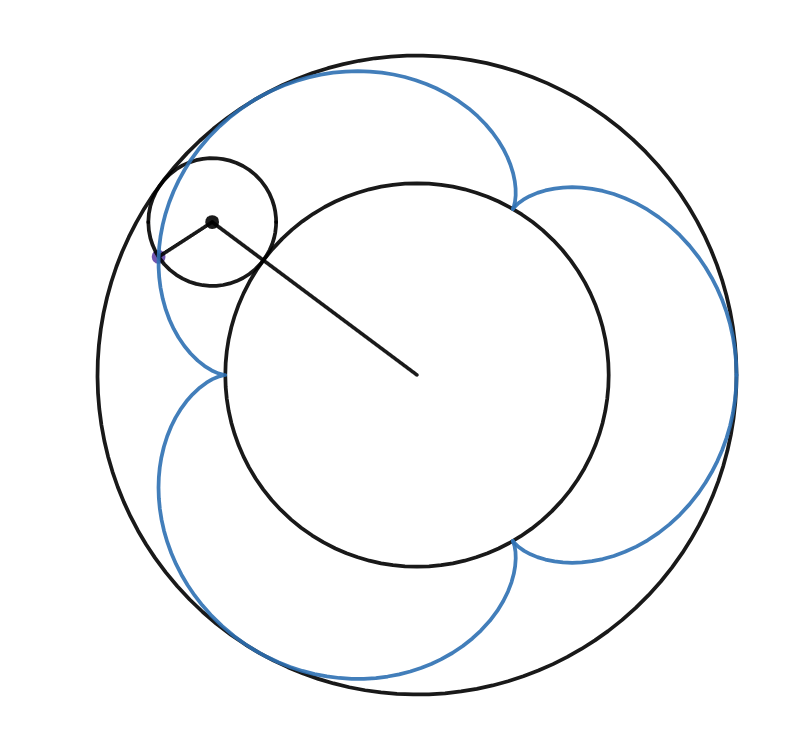}
    \includegraphics[scale=0.14]{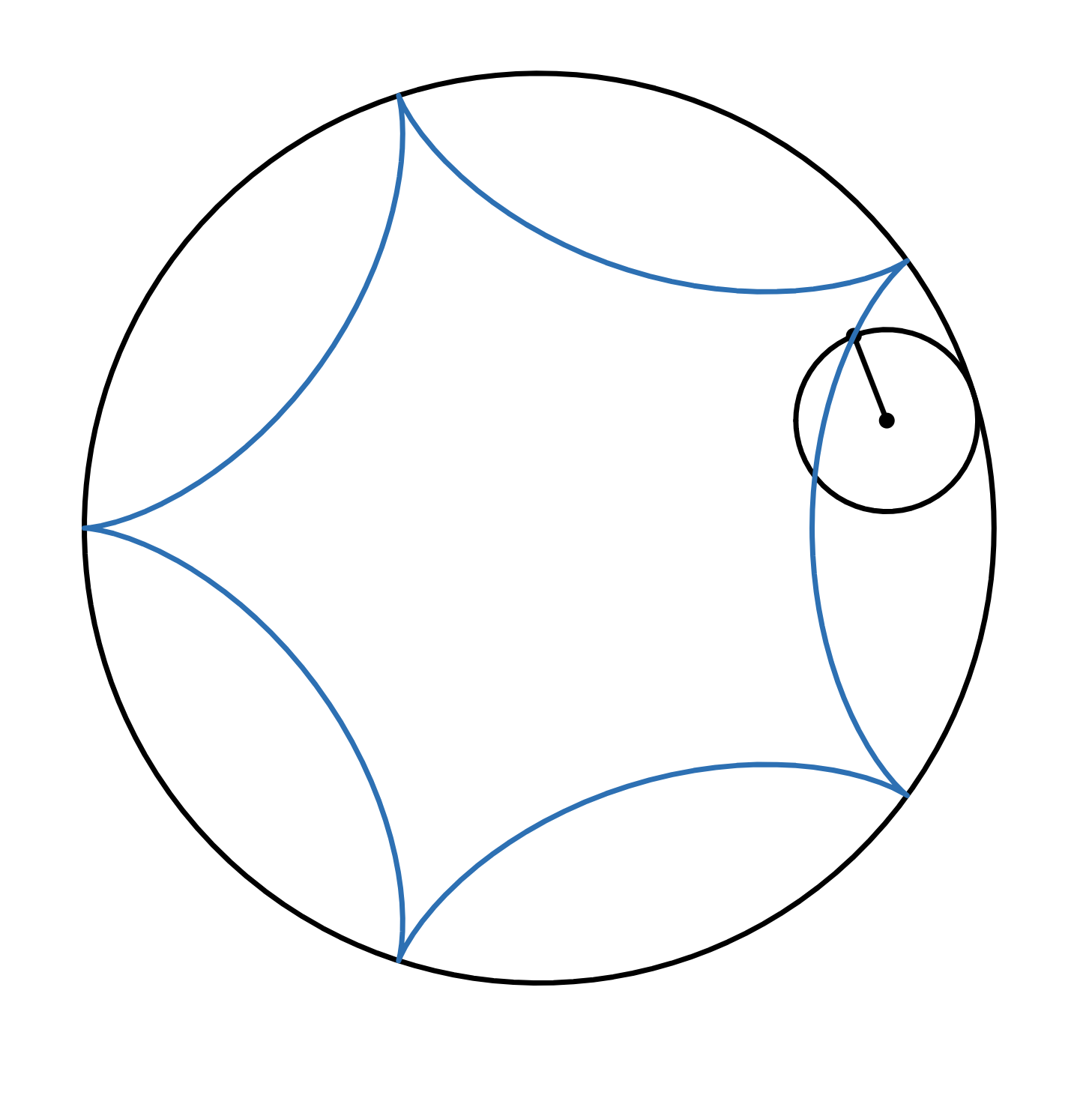}
    \caption{Constructing an epicycloid and a hypocycloid by rolling circles. (left) This epicycloid is formed by a circle of radius $\frac{1}{5}$ rolling outside a circle of radius $\frac{3}{5}$. (right) This hypocycloid is formed by a circle of radius $\frac{4}{3}$ rolling inside a circle of radius $\frac{5}{3}$.}
    \label{fig:epicycloid-example}
\end{figure}

A \emph{hypocycloid} is similar to an epicycloid but the rolling circle is on the inside of the fixed circle. We can realize both these curves with the following parametric equations.

\begin{align}\label{eq:epicycloid} 
     \begin{split}
        x(t) &= \frac{\alpha\cos(\beta t)  + \beta\cos(\alpha t)}{\alpha + \beta} \\
        y(t) &= \frac{\alpha\sin(\beta t)  + \beta\sin(\alpha t)}{\alpha + \beta}
    \end{split}  
\end{align}

If $\alpha$ and $\beta$ are both positive real numbers, then equations (\ref{eq:epicycloid}) describe an epicycloid. By a change of variables, we can assume without loss of generality that $0 < \beta \leq \alpha$. Then (\ref{eq:epicycloid}) corresponds with a circle of radius $\frac{\beta}{\alpha + \beta}$ rolling around a circle of radius $\frac{\alpha - \beta}{\alpha + \beta}$. If $\alpha > 0$ but $\beta < 0$, then equations (\ref{eq:epicycloid}) describe a hypocycloid with fixed circle radius $\frac{\alpha - \beta}{|\alpha + \beta|}$ and rolling circle radius $\frac{|\beta|}{|\alpha + \beta|}$.

In this way, we can realize epicycloids and hypocycloids using the same set of parametric equations. For our purposes, we will only be concerned with cases where $\alpha$ and $\beta$ are integers because we are interested in periodic curves.

Epicycloids and hypocycloids are the relevant envelopes for modular stitch graphs. In a quantifiable sense, understanding the design of a modular stitch graph amounts to finding the most ``natural'' epicycloid or hypocycloid which fits as an envelope. To accomplish this task, we construct a new object.

\section{Dancing Planets}
The second century Ptolemaic model for the solar system involved planets traversing ``epicycles'' over the course of their circular orbit around the Earth. Although we no longer view the universe as heliocentric, and we know that orbits are not circular, the beauty of this model endures. Even today, epicycloids from the ratios of planetary orbits make a common appearance in science-inspired art (see \cite{Henderson}). This history lends us a convenient metaphor to introduce our next mathematical object.\footnote{Other sources have used different metaphors to describe this same situation. Notably, in \cite{Simoson2000}, the author describes two runners on a circular track holding a bungee chord.}

Imagine two planets-- A and B-- on the same circular orbit where planet B is moving twice as fast as planet A. Held between them is an infinitely stretchy tether always pulled taught (see Figure \ref{fig:planet-dance}). We will imagine that A and B never collide but simply pass through each other. As the planets orbit, we watch from above and take a picture 100 times at regular intervals during planet A's orbit. The result? A picture which looks very similar to $\MMT(100, 2)$.

\begin{figure}
    \centering
    \includegraphics[width=0.5\linewidth]{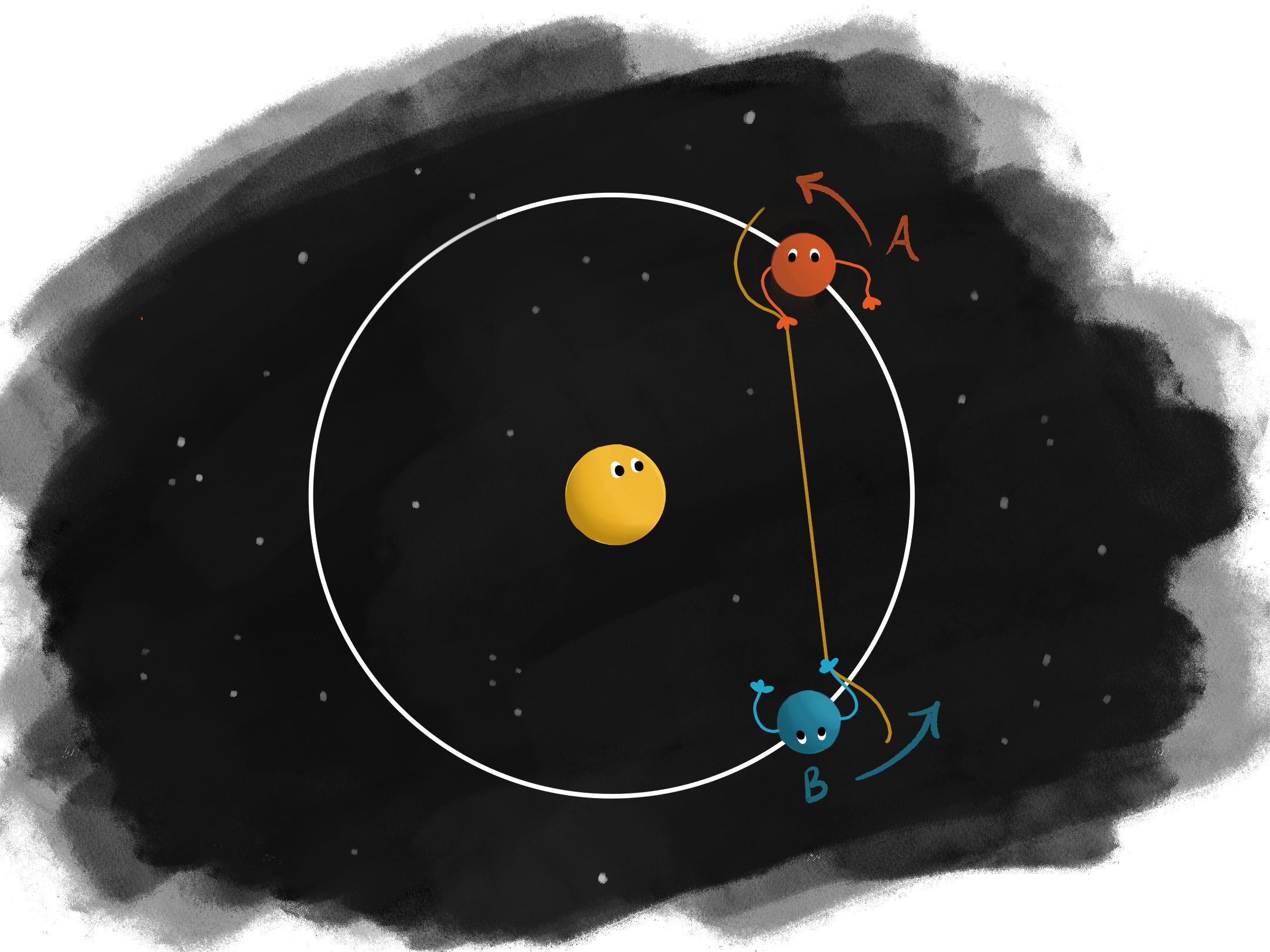}
    \caption{Two planets on the same circular orbit with a stretchy tether between.}
    \label{fig:planet-dance}
\end{figure}

\begin{defn} 
    We denote a \emph{planet dance} by $\moon{\alpha}{\beta}$ where $\alpha, \beta \in \Z$ are integers. These integers define a set of directed chords of the circle with initial point at $e^{2\pi i \alpha t}$ and terminal point at $e^{2\pi i \beta t}$ for all $t \in [0, 1]$.
\end{defn}

When the planet dance $\moon{\alpha}{\beta}$ is $\moon{0}{0}$, $\moon{0}{1}$, $\moon{1}{0}$, or $\gcd(\alpha, \beta) = 1$, we say that $\moon{\alpha}{\beta}$ is in \emph{reduced form}.

\begin{defn}\label{def:postive-negative-dances}
    If $\alpha$ and $\beta$ have the same sign, then $\moon{\alpha}{\beta}$ is a \emph{positive planet dance}. If $\alpha$ and $\beta$ have opposite signs, then $\moon{\alpha}{\beta}$ is a \emph{negative planet dance}. We will typically assume that $\alpha \geq 0$, so that the sign of the planet dance is determined by the sign of $\beta$.
\end{defn}

For a positive planet dance $\moon{\alpha}{\beta}$, the chords envelope a recognizable epicycloid. Figure~\ref{fig:moonDanceExample} shows both $\moon{3}{2}$ and the corresponding epicycloid. This curve is described by the equations (\ref{eq:epicycloid}) with $\alpha = 3$ and $\beta = 2$.

Upon first inspection, negative planet dances do not share this correspondence. However, the pattern is still there, though a bit hidden. To see the correspondence with an hypocycloid, we need to extend the chords beyond the circle. Figure \ref{fig:negative-planet-dance-ex} shows $\moon{5}{-3}$ and its corresponding hypocycloid.

\begin{figure}
     \centering
     \begin{subfigure}[b]{0.3\textwidth}
         \centering
         \includegraphics[width=\textwidth]{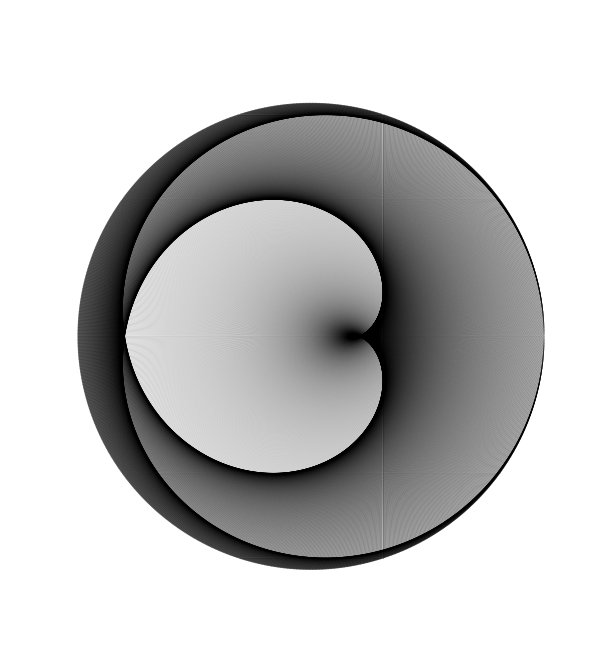}
         \caption{$\moon{3}{2}$}
         \label{fig:moonDance(3,2)}
     \end{subfigure}
     \begin{subfigure}[b]{0.3\textwidth}
         \centering
         \includegraphics[width=\textwidth]{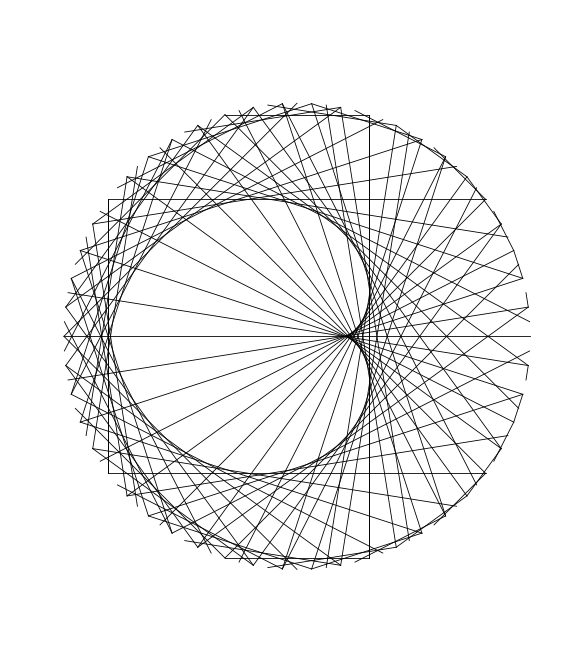}
         \caption{100-sample of $\moon{3}{2}$}
         \label{fig:moonDanceSample(3,2,100)}
     \end{subfigure}
      \begin{subfigure}[b]{0.38\textwidth}
         \centering
         \includegraphics[width=\textwidth]{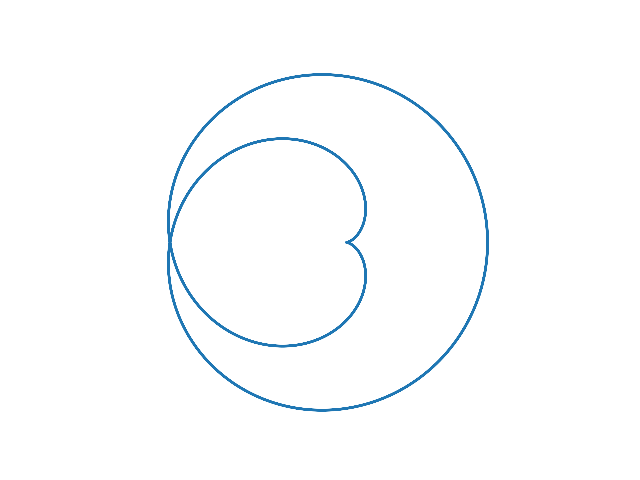}
         \caption{The epicycloid given by $\alpha = 3$ and $\beta = 2$.}
        \label{fig:epicycloid(3,2)}
     \end{subfigure}
        \caption{A planet dance, its 100-sample, and the corresponding epicycloid.}
    \label{fig:moonDanceExample}
\end{figure}

\begin{figure}
    \centering
    \includegraphics[scale=0.48]{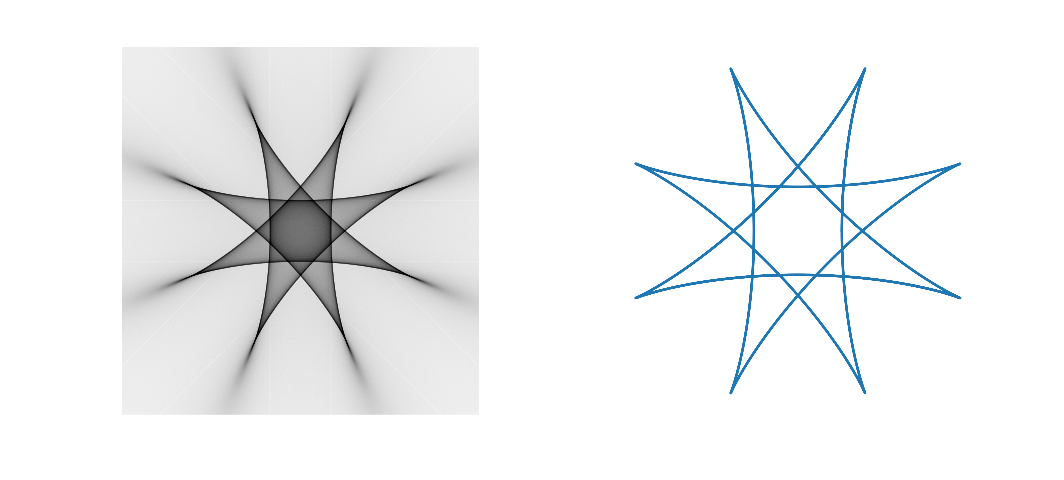}
    \caption{$\moon{5}{-3}$ and the corresponding hypocycloid}
    \label{fig:negative-planet-dance-ex}
\end{figure}

\begin{prop}[Envelope of a planet dance]\label{prop:envelope-full-statement}
    The curve (epicycloid or hypocycloid) given by equations (\ref{eq:epicycloid}) for integers $\alpha$ and $\beta$ will be an envelope for the family of lines given by $\moon{\alpha}{\beta}$.
\end{prop}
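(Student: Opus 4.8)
The plan is to prove tangency constructively: for each chord of $\moon{\alpha}{\beta}$ I will exhibit an explicit point on the given curve where that chord is tangent. Reparameterize the dance by the angle $\phi = 2\pi t$, so that the chord $\ell_\phi$ runs from the initial point $P(\phi) = (\cos\alpha\phi,\ \sin\alpha\phi)$ to the terminal point $Q(\phi) = (\cos\beta\phi,\ \sin\beta\phi)$, and write the candidate envelope from (\ref{eq:epicycloid}), reparameterized to share this same angle, as
\[
C(\phi) = \frac{\alpha\,Q(\phi) + \beta\,P(\phi)}{\alpha + \beta}.
\]
The observation that does all of the conceptual work is that this is merely an affine combination of the two chord endpoints: the coefficients $\tfrac{\alpha}{\alpha+\beta}$ and $\tfrac{\beta}{\alpha+\beta}$ sum to $1$, so $C(\phi)$ lies on the line through $P(\phi)$ and $Q(\phi)$, i.e.\ on the (extended) chord $\ell_\phi$. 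Thus the curve meets every chord, and it remains only to verify that it meets each one tangentially.

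For tangency I would show that the velocity $C'(\phi)$ is parallel to the chord direction $Q(\phi) - P(\phi)$. Differentiating, using $P'(\phi) = \alpha(-\sin\alpha\phi,\ \cos\alpha\phi)$ and $Q'(\phi) = \beta(-\sin\beta\phi,\ \cos\beta\phi)$, the coefficients conspire to give
\[
C'(\phi) = \frac{\alpha\beta}{\alpha+\beta}\bigl(-\sin\beta\phi - \sin\alpha\phi,\; \cos\beta\phi + \cos\alpha\phi\bigr).
\]
To confirm this is parallel to $Q(\phi) - P(\phi) = (\cos\beta\phi - \cos\alpha\phi,\ \sin\beta\phi - \sin\alpha\phi)$, I compute the $2\times 2$ determinant of the two vectors. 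After expanding, the mixed products cancel and what survives is $(\sin^2\alpha\phi + \cos^2\alpha\phi) - (\sin^2\beta\phi + \cos^2\beta\phi) = 1 - 1 = 0$ by the Pythagorean identity. Hence the tangent line to $C$ at $C(\phi)$ is exactly $\ell_\phi$, so each chord is tangent to the curve and $C$ is an envelope of $\moon{\alpha}{\beta}$.

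The computation itself is routine; the care is needed in the degenerate cases, which I expect to be the main obstacle. First, the formula presumes $\alpha + \beta \neq 0$, and one has a genuine chord-family only when $\alpha \neq \beta$ with both integers nonzero; the remaining reduced forms degenerate (a stationary planet yields a pencil through a fixed point, while $\beta = -\alpha$ yields a family of parallel diameters) and fall outside the epicycloid/hypocycloid picture, so I would flag them as exceptions rather than treat them with the formula. Second, $C'(\phi)$ vanishes precisely when $P(\phi) = -Q(\phi)$, that is when the two planets are antipodal; these are exactly the cusps of the curve, where the literal derivative furnishes no tangent direction. At such a point I would argue tangency by continuity: away from the finite set of cusps the tangent line equals $\ell_\phi$, and since the chord direction and the tangent direction both vary continuously, the cusp's diameter is the limit of nearby tangent lines and so remains tangent in the envelope sense.

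Finally, it is worth highlighting the one genuinely convenient feature of this argument. Because (\ref{eq:epicycloid}) encodes epicycloids (when $\beta > 0$) and hypocycloids (when $\beta < 0$) through a single sign convention, the determinant computation above never refers to the sign of $\beta$ and therefore settles positive and negative planet dances simultaneously. In particular, no separate geometric argument for the hypocycloid case is required, which explains why the hidden correspondence of Figure \ref{fig:negative-planet-dance-ex} is recovered for free once the chords are extended to full lines.
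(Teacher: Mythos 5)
Your proof is correct, but there is a wrinkle in the comparison: the paper deliberately does \emph{not} prove Proposition \ref{prop:envelope-full-statement}, deferring instead to Simoson \cite{Simoson2000} and Bouthillier \cite{Bouthillier}, so there is no internal proof to match against. What you have written is essentially a compact reconstruction of Simoson's argument: his ``tack in a bungee cord'' is precisely your affine-combination observation, since $C(\phi) = \tfrac{\alpha}{\alpha+\beta}Q(\phi) + \tfrac{\beta}{\alpha+\beta}P(\phi)$ is the point sitting at a fixed fraction along the (extended) cord from $P$ to $Q$. Your computations check out: the derivative $C'(\phi) = \tfrac{\alpha\beta}{\alpha+\beta}\bigl(-\sin\beta\phi-\sin\alpha\phi,\ \cos\beta\phi+\cos\alpha\phi\bigr)$ is right, and the determinant against $Q(\phi)-P(\phi)$ expands to $(\sin^2\alpha\phi-\sin^2\beta\phi)+(\cos^2\alpha\phi-\cos^2\beta\phi)=0$, so tangency holds wherever both vectors are nonzero. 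Your treatment of the degeneracies is also sound, with two small points worth tightening. First, besides the cusps (where $P=-Q$ and $C'=0$, handled correctly by your limiting argument), there are the instants $(\alpha-\beta)\phi \in 2\pi\Z$ where $P(\phi)=Q(\phi)$: there the chord degenerates to a single point (and $C(\phi)=P(\phi)$ lands on the unit circle at a petal tip), so no line exists and the tangency claim is vacuous; a complete write-up should note this alongside the cusps. Second, in the excluded case $\beta=-\alpha$ the chords from $e^{i\alpha\phi}$ to $e^{-i\alpha\phi}$ are parallel \emph{vertical chords}, not diameters (only $x=0$ is a diameter); this is harmless since you set the case aside, but the description is off. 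Finally, your closing remark is a genuine merit of the approach: because the sign of $\beta$ never enters the determinant, the epicycloid and hypocycloid cases are dispatched uniformly, matching the paper's observation (Figure \ref{fig:negative-planet-dance-ex}) that negative dances exhibit their envelope only once chords are extended to full lines --- which is exactly where your affine coefficients leave $[0,1]$.
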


Proposition \ref{prop:envelope-full-statement} generalizes Fact \ref{fact:baby-envelopes} as will be explained in the remainder of this section. Since we want to focus on extending this result, we will not prove it here but refer to ``The Trochoid as a Tack in a Bungee Cord'' by Simoson \cite{Simoson2000} or Bouthillier's master's thesis \cite{Bouthillier} for a nice proof.

Notice that $\moon{\alpha}{\beta}$ is a \emph{continuous} family of lines. Hence, the envelope given by Proposition \ref{prop:envelope-full-statement} is unique. This stands in contrast to the many envelopes for the discrete object $\MMT(m,a)$.

\begin{defn}[$m$-sampling of a planet dance]
    Let $\moon{\alpha}{\beta}$ be a planet dance and $m$ be a positive integer. An \emph{$m$-sampling} of $\moon{\alpha}{\beta}$ is the finite set of chords for $t = 0, \frac{1}{m}, \frac{2}{m}, \dots, \frac{m-1}{m}$. We denote this set $\sample{m}{\alpha, \beta}$.
\end{defn}

A sampling of a planet dance looks rather like a modular stitch graph (see Figure \ref{fig:moonDanceExample} for an example). But does this visual correlation point to an actual mathematical correspondence?

\begin{question}\label{ques: correspondence}
    For every $\MMT(m,a)$ are there $\alpha$ and $\beta$ such that $\MMT(m,a) = \sample{m}{\alpha, \beta}$? And, for each $\sample{m}{\alpha, \beta}$, can we find $a$ such that $\sample{m}{\alpha, \beta} = \MMT(m, a)$?
\end{question}

The first part of Question \ref{ques: correspondence} has an immediate affirmative answer. The modular stitch graph $\MMT(m, a)$ is exactly the same set of chords as $\sample{m}{1, a}$. When the initial end of the chord has traveled distance $p$, the terminal end will have traveled a distance of $a p$. Thus, each chord in $\sample{m}{1, a}$ can be constructed by multiplying the initial point $p$ by $a$. We will call planet dances of the form $\moon{1}{\beta}$ \emph{integral dances}.

\begin{lem}[Fundamental Correspondence]\label{lem:fundamental-correspondence}
    The modular stitch graph $\MMT(m,a)$ is an $m$-sampling of the integral planet dance $\moon{1}{a}$.
\end{lem}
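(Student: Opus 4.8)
The plan is to prove the lemma by a direct comparison of definitions, showing that the two sets of chords coincide chord-by-chord. The only genuine content is reconciling the modular reduction in the combinatorial definition of $\MMT(m,a)$ with its absence in the sampling $\sample{m}{1,a}$; everything else is unwinding notation, so I expect the argument to be short.

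First I would invoke the geometric description of $\MMT(m,a)$ from the Remark: it is the set of chords with initial point $e^{2\pi i t_k}$ and terminal point $e^{2\pi i a t_k}$, where $t_k = \frac{k}{m}$ for $k \in \{0, \dots, m-1\}$. Next I would write out the definition of the $m$-sampling $\sample{m}{1,a}$: taking $\alpha = 1$ and $\beta = a$ in the planet dance $\moon{1}{a}$ and sampling at $t = \frac{k}{m}$ yields chords with initial point $e^{2\pi i \cdot 1 \cdot (k/m)} = e^{2\pi i t_k}$ and terminal point $e^{2\pi i \cdot a \cdot (k/m)} = e^{2\pi i a t_k}$. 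These expressions are term-by-term identical, so as indexed families over $k \in \{0, \dots, m-1\}$ the two collections of directed chords agree.

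The one step that requires a word of justification --- and the closest thing to an obstacle --- is checking that this geometric terminal point agrees with the \emph{combinatorial} one, namely the point labeled $ap \bmod m$. Writing $p = k$, the combinatorial terminal point is $e^{2\pi i (ap \bmod m)/m}$. Since $ap \equiv (ap \bmod m) \pmod{m}$, the two exponents differ by an integer multiple of $2\pi i$, so by the periodicity of the complex exponential $e^{2\pi i (ap \bmod m)/m} = e^{2\pi i a p / m} = e^{2\pi i a t_k}$. Thus the modular arithmetic on the labels is absorbed exactly by the $2\pi$-periodicity of the parameterization, and no separate case analysis on $a$ or $m$ is needed. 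Concluding, both constructions produce the same set of directed chords, so $\MMT(m,a) = \sample{m}{1,a}$, which is precisely the statement that $\MMT(m,a)$ is an $m$-sampling of the integral dance $\moon{1}{a}$.
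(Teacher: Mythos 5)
Your proof is correct and takes essentially the same route as the paper, which treats the lemma as immediate from the definitions: when the initial point has traveled distance $p$, the terminal point has traveled $ap$, so the chords of $\MMT(m,a)$ and $\sample{m}{1,a}$ coincide term by term. Your explicit remark that the reduction $ap \bmod m$ is absorbed by the $2\pi$-periodicity of the parameterization $e^{2\pi i t}$ just spells out the step the paper leaves implicit.
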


Together, Lemma \ref{lem:fundamental-correspondence} and Proposition \ref{prop:envelope-full-statement} imply Fact \ref{fact:baby-envelopes} from the previous section. A modular stitch graph is a discrete sampling of a planet dance, and the envelope for a planet dance is an envelope for its discrete sampling. However, a given modular stitch graph may be a discrete sampling of multiple planet dances, just as it might have multiple envelopes. 

The answer to the second part of Question \ref{ques: correspondence} is not so simple. If we have a planet dance $\moon{\alpha}{\beta}$ with $\alpha > 1$ and $\gcd(\alpha, \beta) = 1$, the multiplier of the corresponding modular stitch graph should be $\frac{\beta}{\alpha}$. But in our definition of modular stitch graphs, we do not allow non-integral multipliers. Of course, we could change the definition, but we will see that there is a way forward without this restructuring. 

\begin{rem}
    Other sources have taken the route of defining modular stitch graphs using non-integer multipliers. See \cite{Lengler} for an example.
\end{rem}
 
Why do we hope that we can use the current definition of modular stitch graphs to represent samplings of non-integral planet dances? Empirical evidence. Figure \ref{fig:MMT-100-34} depicts $\MMT(100, 34)$ and we notice that it looks very similar to $\moon{3}{2}$ in Figure \ref{fig:moonDance(3,2)}. In fact, any modular stitch graph of the form $\MMT(3a -2, a)$ will have this same epicycloid curve as an envelope\footnote{This example is one member in an infinite family that initiated this research project. I discuss this story and the family of graphs in Section~\ref{sec:table-of-tables}.}. What is the deeper reason that these two designs look the same? 

\section{Introducing Topology}\label{sec:a-new-perspective}

\subsection{Planet dances as paths on a torus}\label{sec:moon-systems-as-paths}

A directed chord on the circle is uniquely determined by the position of the two endpoints---planet A and planet B. Hence, the space of all such possible chords is a circle times a circle... a torus! We denote the circle by $\Sp^1$ and the torus by $\T^2$. 

\begin{figure}[H]
    \centering
    \includegraphics[width=0.3\linewidth]{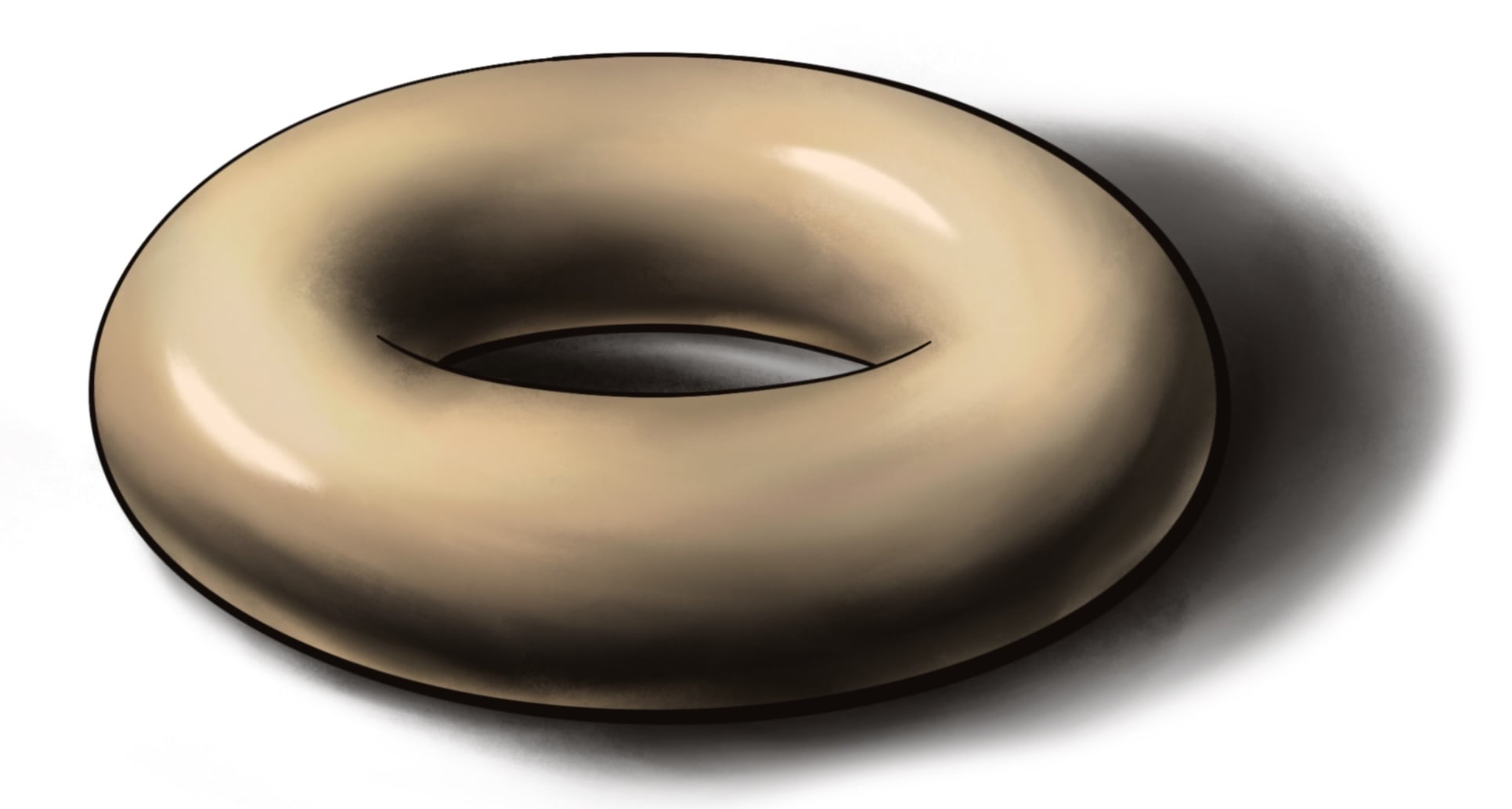}
    \caption{A standard embedding of a torus in $\R^3$.}
    \label{fig:torus}
\end{figure}

Instead of imagining the familiar doughnut shape as in Figure \ref{fig:torus}, we will be working with the flat torus.\footnote{Thanks to Elliot Kienzle for the beautiful torus drawing.} Parameterize the circle with $e^{2\pi i t}$ for $t \in [0,1]$. Then the product of two circles $\Sp^1 \times \Sp^1$ is parameterized by the unit square $[0,1] \times [0,1]$. We can then  represent $\T^2$ as the unit square in $\R^2$ with opposite sides identified or, equivalently, $\T^2 \cong \R^2 / \Z^2$.

A point on the torus gives a single chord on the circle where the coordinates determine the endpoints of the chord. A path on the torus gives a continuous family of chords. For a planet dance $\moon{\alpha}{\beta}$, we know planet A and planet B are moving at constant speeds $\alpha$ and $\beta$ respectively. So all chords in $\moon{\alpha}{\beta}$ form a linear closed path on the flat torus. These loops are commonly used to represent torus knots. Figure~\ref{fig:moon-systems-on-torus} shows two examples of these linear loops on the torus.

\begin{figure}
    \centering
    \includegraphics[scale=0.45]{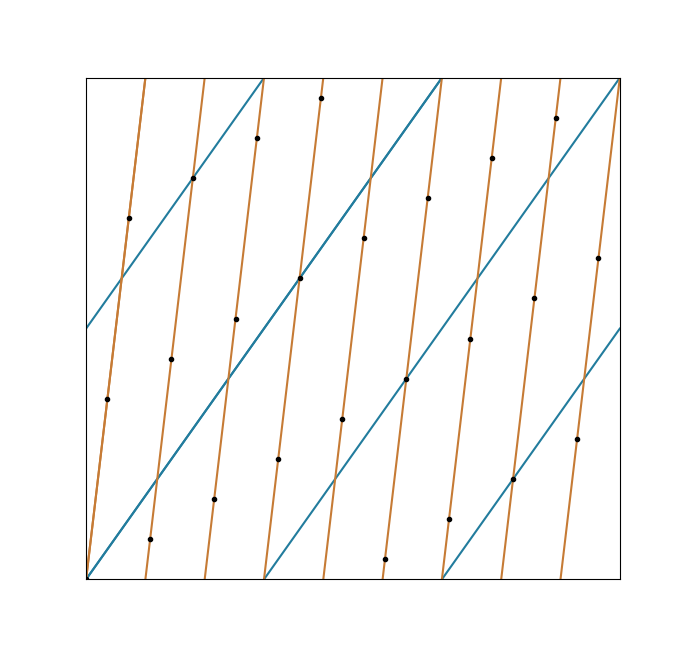}
    \caption{An example of linear loops on the torus. Identify the top and bottom edges together, and the left and right edges together with a translation. The blue lines depict $y = \frac{3}{2}x$ which corresponds with $\moon{2}{3}$ and the orange lines depict $y = 9x$ which corresponds to $\moon{1}{9}$. The black dots indicate that $\moon{1}{9}$ is sampled at a rate of $25$, corresponding with $\MMT(25,9)$.}
    \label{fig:moon-systems-on-torus}
\end{figure}

The line in $\R^2/\Z^2$ which corresponds to $\moon{\alpha}{\beta}$ is given by
\[x = \alpha t, \hspace{10pt} y = \beta t, \hspace{10pt} \text{ or, if } \alpha \neq 0, \hspace{10pt} y = \tfrac{\beta}{\alpha}x.\]
We will identify the planet dance $\moon{\alpha}{\beta}$ with this line on the torus, and depending on the context, refer to this line as $\moon{\alpha}{\beta}$.

\begin{rem}[On direction]
     Swapping the values of $\alpha$ and $\beta$ in a planet dance will not make a difference in the picture, but it does change the direction of each chord and change the slope of the corresponding line in $\R^2/\Z^2$. Moving forward, we will maintain the direction of chords as this inherently matters for modular stitch graphs. In a modular stitch graph, we think of the initial point of the chord being multiplied by $a$ to get the terminal point of the chord. That is, we would like $\MMT(m,a)$ to correspond with $\sample{m}{1, a}$ and not $\sample{m}{a, 1}$.
\end{rem}

If a planet dance is a linear loop on the torus, then a sampling of the planet dance is a set of equally spaced points along the loop. As the sampling becomes more frequent, the set of points more closely approximates the loop and the discrete sampling picture becomes a better representation of the continuous-time planet dance. 

Not only do planet dances and sampled planet dances have a geometric interpretation, but they also have an algebraic one. We can write $\moon{\alpha}{\beta}$ as a collection of points in $\R^2/\Z^2$.
\[\moon{\alpha}{\beta} = \left\{\left(\alpha t, \beta t\right) \: | \: t \in \R\right\}\]
This set forms a rank-one continuous subgroup of $\R^2/\Z^2$ with component-wise addition.
\[(\alpha t, \beta t) + (\alpha s, \beta s) = \left(\alpha(t + s), \: \beta(t + s)\right) \in \moon{\alpha}{\beta}\]
Likewise, $\sample{m}{\alpha, \beta}$ is a rank-one discrete subgroup of $\R^2/\Z^2$. We will use this group structure in the subsequent sections.

\subsection{Aliasing in planet dances}\label{sec:planet-dance-aliasing}
We have observed that sometimes samplings of planet dances can appear in disguise. Like how $\MMT(100, 34) = \sample{m}{1, 34}$ looks much more like $\moon{3}{2}$ than $\moon{1}{34}$. It's possible the reader has seen this kind of phenomena before.

\emph{Aliasing} is a pervasive topic in signal processing. The idea is best demonstrated with sine waves. Suppose that a computer is storing discrete-time samples of a musical note, represented as a sine wave. If the sampling rate is too low, then when the computer plays back the note from its discrete samples, the listener hears it as a lower note, i.e. with a lower frequency. This happens because there is a more ``natural'' wave that fits the samples. The two waves---the original sine wave and the lower frequency alias---intersect exactly at the sample points. An example is shown in Figure ~\ref{fig:sine-waves}. Common examples of aliasing are Moir\'e patterns and the ``wagon wheel'' effect in cinema. In signal processing, aliasing is often seen as an unfortunate reality, and research into ``anti-aliasing filters'' is common. However, this phenomenon is why we see such a large variety of designs among modular stitch graphs. This will be the focus for the rest of the paper.

\begin{figure}
    \centering
    \includegraphics[scale=1.3]{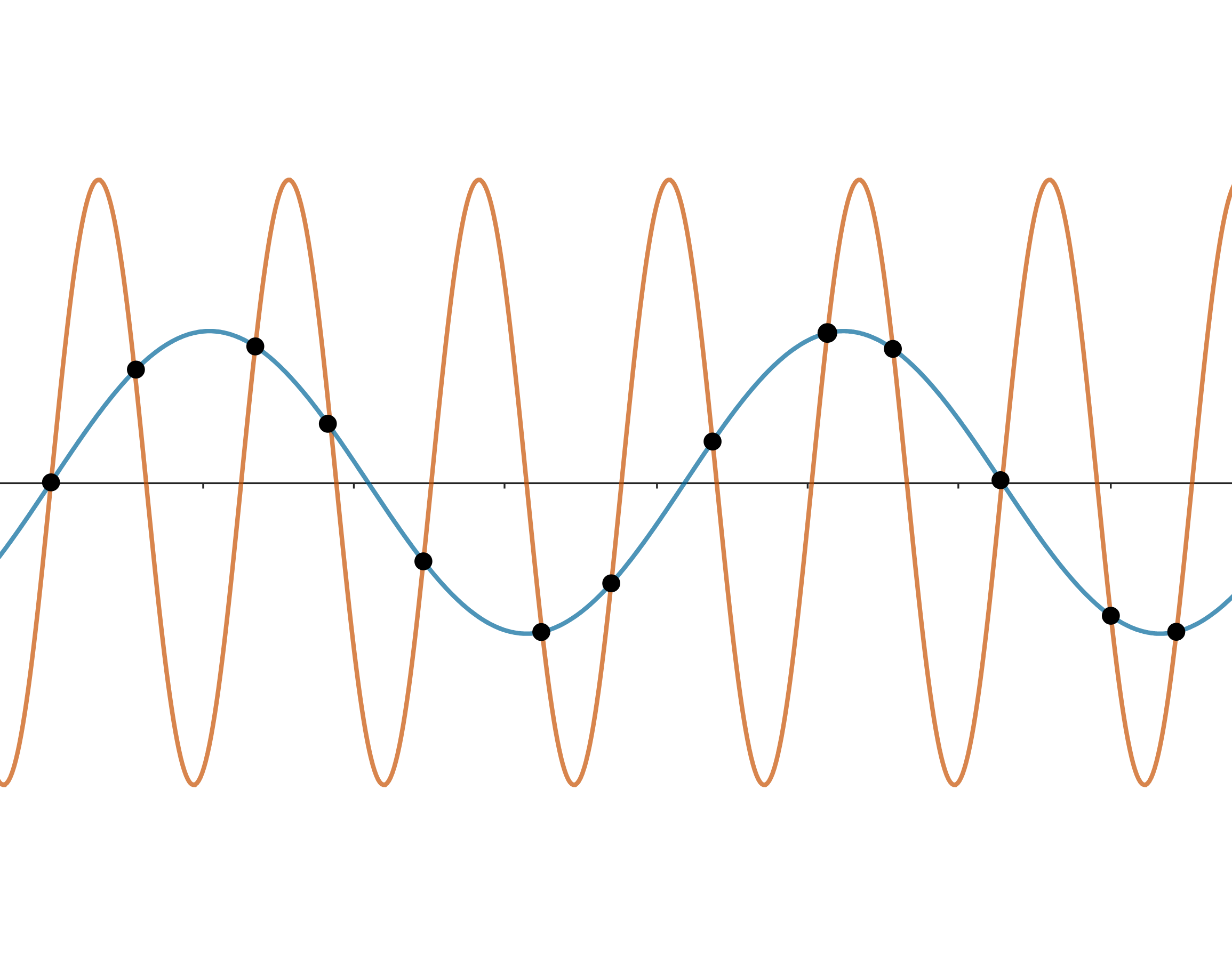}
    \caption{When a higher frequency sine wave is under-sampled, it can appear as a lower frequency sine wave where the sampling points are exactly the points of intersection.}
    \label{fig:sine-waves}
\end{figure}

Just as one sine wave can alias another when we take a discrete sampling, one planet dance can alias another. The key to this aliasing is the points of intersection. 

Suppose that the lines corresponding to planet dances $\mathcal{P}_1$ and $\mathcal{P}_2$ intersect $m$ times in $\R^2/\Z^2$. These intersection points occur at regular intervals because both lines have constant rational slopes. Then the $m$-sampling of $\mathcal{P}_1$ and the $m$-sampling of $\mathcal{P}_2$ are the same set of chords and will look identical. Since we know how to achieve $m$-samplings of \textit{integral} planet dances with modular stitch graphs, this gives us a way to sample \textit{any} planet dance with a modular stitch graph by finding an integral alias. 

\begin{lem}[Planet dance intersection]\label{lem:torus-knot-intersection}
    Let lines $\ell_1$ and $\ell_2$ in $\R^2/\Z^2$ be given by
    \[\ell_1(t) = (\alpha t, \beta t) \hspace{10pt} \text{and} \hspace{10pt} \ell_2(t) = (\gamma t, \delta t)\]
    such that $\gcd(\alpha, \beta) = \gcd(\gamma, \delta) = 1$. Then $\ell_1$ and $\ell_2$ will intersect $|\alpha\delta - \beta \gamma|$ times and will do so at regular intervals.
\end{lem}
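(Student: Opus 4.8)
The plan is to pass to the universal cover $\R^2$ of the flat torus $\T^2 = \R^2/\Z^2$ and reduce the entire problem to a single one-variable congruence. Throughout write $N = |\alpha\delta - \beta\gamma|$. First I would record two preliminary facts. Since $\gcd(\alpha,\beta) = 1$, the loop $\ell_1$ is embedded with minimal period $1$: if $(\alpha s, \beta s) \in \Z^2$ then, writing $s = p/q$ in lowest terms, $q \mid \alpha p$ and $q \mid \beta p$ force $q \mid \alpha$ and $q \mid \beta$, hence $q = 1$; so distinct parameters $s \in [0,1)$ give distinct points of $\ell_1$. Second, the full preimage of $\ell_2$ under the projection $\R^2 \to \T^2$ is the family of equally spaced parallel lines $\{(x,y) : \delta x - \gamma y \in \Z\}$. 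Indeed, translating the line $\delta x - \gamma y = 0$ by a lattice vector $(m,n)$ yields $\delta x - \gamma y = \delta m - \gamma n$, and as $(m,n)$ ranges over $\Z^2$ the quantity $\delta m - \gamma n$ sweeps out all of $\Z$ precisely because $\gcd(\gamma,\delta) = 1$ (Bézout).

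Next I would combine these. A torus point $\ell_1(s)$ lies on $\ell_2$ if and only if its lift $(\alpha s, \beta s)$ satisfies the preimage condition for $\ell_2$; this condition is invariant under adding elements of $\Z^2$ (because $\gamma,\delta \in \Z$), so it descends to a well-defined condition on the torus. Substituting the parameterization of $\ell_1$ gives
\[\delta(\alpha s) - \gamma(\beta s) = (\alpha\delta - \beta\gamma)\,s \in \Z.\]
Thus $\ell_1(s) \in \ell_2$ exactly when $s \in \tfrac{1}{N}\Z$.

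The count and the regular spacing then drop out simultaneously. Within one period $s \in [0,1)$ the solutions are $s = 0, \tfrac{1}{N}, \tfrac{2}{N}, \dots, \tfrac{N-1}{N}$, which are $N$ equally spaced parameter values; by embeddedness they produce $N$ distinct, equally spaced intersection points along $\ell_1$. The roles of the two lines are symmetric: along $\ell_2$ the intersection condition reads $(\beta\gamma - \alpha\delta)\,t \in \Z$, giving the same spacing with the same $N$. This establishes both assertions of the lemma at once.

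The one genuinely delicate point — and the step I expect to be the main obstacle — is the preimage description: I must use $\gcd(\gamma,\delta) = 1$ to guarantee that \emph{every} integer level $k$ is attained, since otherwise the preimage would consist of only a $\gcd$-fraction of the lines and I would undercount the intersections. The reduced-form hypotheses $\gcd(\alpha,\beta) = \gcd(\gamma,\delta) = 1$ are exactly what make the answer clean. I would also note that the argument presumes $N \neq 0$; when $\alpha\delta - \beta\gamma = 0$ the two lines are parallel and are either disjoint or identical, so there is no finite transverse intersection count, consistent with the formula reading $0$. Conceptually, $N$ is nothing but the homological intersection number of the two loops, the absolute value of $\det\left(\begin{smallmatrix}\alpha & \gamma \\ \beta & \delta\end{smallmatrix}\right)$, and the equal spacing reflects that the intersection points form a coset of the subgroup $\tfrac{1}{N}\Z/\Z$ along each geodesic; but the direct congruence above yields the most self-contained proof.
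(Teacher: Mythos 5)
Your proof is correct, and it takes a genuinely different route from the paper. The paper only sketches its argument: perform a change of basis sending $(\alpha,\beta)$ and $(\gamma,\delta)$ to $(1,0)$ and $(0,1)$, then invoke Pick's Theorem to relate the lattice points inside the parallelogram spanned by the two vectors to the determinant $|\alpha\delta - \beta\gamma|$; the intersections correspond to these lattice points. You instead work in the universal cover and reduce everything to the single congruence $(\alpha\delta - \beta\gamma)s \in \Z$, after identifying the full preimage of $\ell_2$ as the family $\{\delta x - \gamma y \in \Z\}$ --- and you correctly flag the key step, namely that B\'ezout with $\gcd(\gamma,\delta)=1$ is what guarantees every integer level is attained, so no lines are missed. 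Your approach buys several things the paper's sketch does not deliver: the \emph{regular intervals} claim falls out immediately and explicitly (the intersection parameters are exactly $s = 0, \tfrac{1}{N}, \dots, \tfrac{N-1}{N}$, which is the form of the statement actually used later for Corollary \ref{cor:aliasing-planet-dances}, where samplings must agree at equally spaced $t$); the embeddedness of $\ell_1$ via $\gcd(\alpha,\beta)=1$ is made explicit rather than implicit, so the $N$ parameter values demonstrably give $N$ distinct points; and the argument is self-contained, needing no lattice-point theorem. What the paper's lattice picture buys in exchange is conceptual clarity: it exhibits $N$ directly as the area of a fundamental parallelogram, i.e.\ the homological intersection number of the two torus loops --- a connection you note yourself at the end. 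One trivial refinement to your degenerate-case remark: when $\alpha\delta - \beta\gamma = 0$ with both dances in reduced form, $(\gamma,\delta) = \pm(\alpha,\beta)$, and since both lines pass through the origin of $\R^2/\Z^2$ they are in fact \emph{identical}, not merely ``disjoint or identical''; so the $N = 0$ case is the case of a single shared geodesic.
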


This is a standard result often applied to intersection numbers of torus knots in $\T^2$. The proof follows by doing a change of basis to send the vectors $(\alpha, \beta)$ and $(\gamma, \delta)$ to $(1,0)$ and $(0,1)$ respectively. Then one can apply Pick's Theorem to relate the number of integer lattice points inside the relevant parallelogram to the determinant of the change of basis matrix. By restating Lemma \ref{lem:torus-knot-intersection}, we can relate non-integral planet dances to modular stitch graphs.

\begin{cor}\label{cor:aliasing-planet-dances}
    Suppose that $\moon{\alpha}{\beta}$ and $\moon{\gamma}{\delta}$ are two planet dances in reduced form and let $m = |\alpha\delta - \beta \gamma|$. Then $\sample{m}{\alpha, \beta} = \sample{m}{\gamma, \delta}$. 
\end{cor}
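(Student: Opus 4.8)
The plan is to read off both samplings as sets of points on the flat torus $\R^2/\Z^2$ and then use Lemma \ref{lem:torus-knot-intersection} to show that each sampling is exactly the common intersection set of the two lines. Recall that a point of $\T^2$ is a single directed chord, so proving the two sets of chords coincide is the same as proving the corresponding two sets of torus points coincide. Under the identification of $\moon{\alpha}{\beta}$ with the line $\ell_1(t)=(\alpha t,\beta t)$, the sampling $\sample{m}{\alpha,\beta}$ is precisely the set $\{\ell_1(k/m) : k=0,\dots,m-1\}$, and likewise $\sample{m}{\gamma,\delta}=\{\ell_2(k/m):k=0,\dots,m-1\}$ where $\ell_2(t)=(\gamma t,\delta t)$.

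First I would note that, since $\alpha,\beta,\gamma,\delta\in\Z$, each line closes up into a loop parameterized by $t\in[0,1]$ (adding $1$ to $t$ shifts the coordinates by the integer vectors $(\alpha,\beta)$ and $(\gamma,\delta)$, which vanish mod $\Z^2$), and because $\gcd(\alpha,\beta)=\gcd(\gamma,\delta)=1$ (the dances are in reduced form) each loop is embedded, so its points correspond bijectively to parameters $t\in[0,1)$. Both loops are traversed at constant velocity, so equal spacing in arc length is the same as equal spacing in the parameter $t$. Lemma \ref{lem:torus-knot-intersection} then applies and tells us that $\ell_1$ and $\ell_2$ meet in exactly $m=|\alpha\delta-\beta\gamma|$ points, occurring at regular intervals.

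The key step is to pin down where these intersection points sit. Both loops pass through the origin at $t=0$, so $(0,0)$ is one of the $m$ intersection points. Since the $m$ intersections are equally spaced in $t$ along $\ell_1$ and the loop is embedded, consecutive ones differ by $\Delta t = 1/m$, and anchoring at the origin forces them to occur exactly at $t=0,1/m,\dots,(m-1)/m$. Hence the intersection set equals $\{\ell_1(k/m)\}=\sample{m}{\alpha,\beta}$. Running the identical argument along $\ell_2$ shows the same intersection set equals $\sample{m}{\gamma,\delta}$. Since both samplings coincide with the single intersection set, they are equal.

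The main obstacle is the phase-pinning argument in the last paragraph: translating the qualitative phrase ``at regular intervals'' from the lemma into the precise statement that the intersections land on the parameter grid $\{k/m\}$. This requires being explicit that (i) the constant-speed parameterization turns regular arc-length spacing into regular $t$-spacing, and (ii) the shared basepoint $(0,0)$ fixes the otherwise-free phase, so that the grid of intersection parameters aligns with the sampling grid rather than being offset from it. A minor edge case to dispose of is $m=0$, where $\alpha\delta=\beta\gamma$ forces the two reduced dances to be the same line and both samplings are empty, making the claim trivial.
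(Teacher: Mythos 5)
Your proposal is correct and takes essentially the same route as the paper: the paper likewise derives Corollary \ref{cor:aliasing-planet-dances} directly from Lemma \ref{lem:torus-knot-intersection}, identifying both $m$-samplings with the common, regularly spaced intersection set of the two lines through the origin in $\R^2/\Z^2$. The only difference is one of detail---you make explicit the phase-pinning step (the shared basepoint $(0,0)$ forcing the intersection parameters onto the grid $t = k/m$ rather than an offset of it), which the paper leaves implicit in its remark that the intersections ``occur at regular intervals.''
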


\begin{prop}\label{prop:aliasing-planet-dances}
    Given a planet dance $\moon{\alpha}{\beta}$ in reduced form and $a \in \Z$, let $m = |\alpha a - \beta|$. Then the modular stitch graph $\MMT(m, a)$ will be an $m$-sampling of $\moon{\alpha}{\beta}$.
\end{prop}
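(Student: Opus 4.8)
The plan is to obtain this proposition directly from the two results already in hand: the Fundamental Correspondence (Lemma \ref{lem:fundamental-correspondence}) and the aliasing corollary (Corollary \ref{cor:aliasing-planet-dances}). The essential observation is that the integral dance $\moon{1}{a}$ serves as the ``known'' sampling whose chords we can draw as a modular stitch graph, so it suffices to exhibit $\moon{\alpha}{\beta}$ as an alias of $\moon{1}{a}$ at the right sampling rate.

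First I would verify that $\moon{1}{a}$ is in reduced form. This is immediate, since $\gcd(1, a) = 1$ for every integer $a$. Thus the hypotheses of Corollary \ref{cor:aliasing-planet-dances} are met by taking the first dance to be $\moon{\alpha}{\beta}$, which is reduced by assumption, and the second to be $\moon{\gamma}{\delta} = \moon{1}{a}$.

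Next I would apply Corollary \ref{cor:aliasing-planet-dances} with $(\gamma, \delta) = (1, a)$. The corollary's sampling rate then becomes $|\alpha\delta - \beta\gamma| = |\alpha a - \beta| = m$, which is exactly the modulus in the statement. This yields the equality of chord sets $\sample{m}{\alpha, \beta} = \sample{m}{1, a}$. Finally, Lemma \ref{lem:fundamental-correspondence} identifies $\sample{m}{1, a}$ with $\MMT(m, a)$, and chaining the two equalities gives $\MMT(m, a) = \sample{m}{\alpha, \beta}$, which is precisely the claim that $\MMT(m, a)$ is an $m$-sampling of $\moon{\alpha}{\beta}$.

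Since the real work is carried by the cited lemma and corollary, I do not expect a genuine obstacle in the main argument; the proof is essentially the composition of the two results. The one point deserving care is the degenerate case $m = 0$, which occurs exactly when $\alpha a = \beta$. Because $\gcd(\alpha, \beta) = 1$, this forces $\alpha \mid \beta$ with $|\alpha| = 1$, so under the convention $\alpha \geq 0$ we have $\alpha \in \{0, 1\}$ and the dance is already integral (or the trivial $\moon{0}{0}$). Hence this case is either excluded by requiring a positive modulus or handled at once, and the substantive content is entirely covered by the argument above.
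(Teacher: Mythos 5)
Your proposal is correct and takes essentially the same route as the paper, which likewise obtains Proposition~\ref{prop:aliasing-planet-dances} by specializing Corollary~\ref{cor:aliasing-planet-dances} to the integral alias $\moon{1}{a}$ (noting $\gcd(1,a)=1$, so $m = |\alpha a - \beta|$) and then identifying $\sample{m}{1,a}$ with $\MMT(m,a)$ via Lemma~\ref{lem:fundamental-correspondence}. Your handling of the degenerate case $m = 0$ is a small point of care the paper leaves implicit, but it does not alter the argument.
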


Although we arrived at Proposition \ref{prop:aliasing-planet-dances} geometrically, there is also an algebraic way to view this result. Consider that for $\alpha, \beta, a$, and $m$ as above, we have the relation $\beta = \alpha a \pm m$. Then we note that $\sample{m}{\alpha, \alpha a \pm m} = \sample{m}{\alpha, \alpha a}$. This follows from the definition of a sampling. We can then rephrase Proposition \ref{prop:aliasing-planet-dances} as $\sample{m}{1, a} = \sample{m}{\alpha, \alpha a}$. 

Note that this is only true because $\gcd(\alpha, m) = 1$. To see this, write $\sample{m}{1, a}$ and $\sample{m}{\alpha, \alpha a}$ as subsets of $\T^2$.
\[\sample{m}{1, a} = \left\{ \left(\frac{k}{m}, \frac{ak}{m}\right) \: \Big| \:0 \leq k < m\right\} \hspace{30pt} \sample{m}{\alpha, \alpha a} = \left\{ \left(\frac{\alpha k}{m}, \frac{a(\alpha k)}{m}\right) \: \Big| \:0 \leq k < m\right\}\]
In order for these to be the same set of points, we need that the map $k \mapsto \alpha k$ on $\Z/m\Z$ is an isomorphism of the group. This happens if and only if $\alpha$ is invertible in $\Z/m\Z$. There are then $|(\Z/m\Z)^{\times}|$ possible choices for $\alpha$ given $m$ and $a$. 

This brings to light a way to describe the most ``visually natural'' envelope for a set of chords $\sample{m}{1, a}$. We use the metric on $\T^2$ inherited from $\R^2$ to determine the sample point $(\frac{\alpha}{m}, \frac{a\alpha}{m})$ closest to the origin. If $\gcd(\alpha, m) = 1$, then the line in $\R^2/\Z^2$ connecting $(0,0)$ to $(\frac{\alpha}{m}, \frac{a\alpha}{m})$ will pass through all the sample points. This determines a planet dance alias for $\sample{m}{1,a}$. If $\gcd(\alpha, m) = d > 1$, then we have some interesting behavior which will be explained in Section \ref{sec:table-of-tables}.

To address an inverse of Proposition \ref{prop:aliasing-planet-dances}, a sampling of a planet dance $\sample{m}{\alpha, \beta}$ can be realized as a modular stitch graph exactly when $m = |\alpha a - \beta|$
for some positive integer $a$. This has a solution if and only if $m \equiv \pm \beta \mod \alpha$. If we start with a general sampled planet dance $\sample{m}{\alpha, \beta}$, we may not be able to represent it as a modular stitch graph with the exact sample rate $m$. We can instead choose the nearest $m'$ to $m$ such that $m' \equiv \pm \beta \mod \alpha$ and then we can realize $\sample{m'}{\alpha, \beta}$ as an modular stitch graph with multiplier $m'$. The size of $\alpha$ will determine how far we may need to roam to find a solution.

\subsection{Returning to our questions}

Looking back at Question \ref{ques: correspondence}, we now have an answer. Given any planet dance $\moon{\alpha}{\beta}$, we can find a corresponding family of modular stitch graphs which are all samplings of $\moon{\alpha}{\beta}$. But given a specific sampling $\sample{m}{\alpha, \beta}$, we may not be able to realize it as a modular stitch graph. This also reveals why certain modular stitch graphs exhibit designs that are very different from the integral planet dance given by the Fundamental Correspondence Lemma \ref{lem:fundamental-correspondence}. When $\MMT(m,a)$ is viewed as an $m$-sampling of $\moon{1}{a}$, if $m$ is an ``under-sampling'' then $\MMT(m,a)$ will alias as a sampling of another planet dance. But this correspondence is only the tip of the iceberg.

\section{Overlaying Designs}\label{sec:table-of-tables}

We now return to the patterns in modular stitch graphs that we noticed in Section \ref{sec:mod-mult-tables}. Assuming that $m$ is even, we observed that graphs of the form $\MMT(m, \frac{m}{2})$ or $\MMT(m, \frac{m}{2} + 1)$ or $\MMT(m, \frac{m}{2} - 1)$ each display a distinct design which holds for all such $m$ (recall Question \ref{ques:families-of-tables}). A natural generalization is to replace `2' with any natural number $b$. That is, supposing that $m$ is divisible by $b$, we consider three classes of graphs:\footnote{One might want to describe these families as $\MMT(ba, a)$ and $\MMT(ba \pm b, a)$ to avoid the assumption that $b | m$. However, in order to keep the narrative of discovery coherent, we will keep the above perspective for now.}
\[\MMT\left(m, \tfrac{m}{b}\right), \hspace{20pt} \MMT\left(m, \tfrac{m}{b} + 1\right), \hspace{10pt} \text{and} \hspace{10pt} \MMT\left(m, \tfrac{m}{b} - 1\right).\]
The designs for each of these classes are generalizations of those when $b = 2$. Examples of these graphs are shown in Figure~\ref{fig:m-div-b-examples}.

\begin{figure}
    \centering
    \begin{subfigure}[b]{0.2\textwidth}
         \centering
         \includegraphics[width=\textwidth]{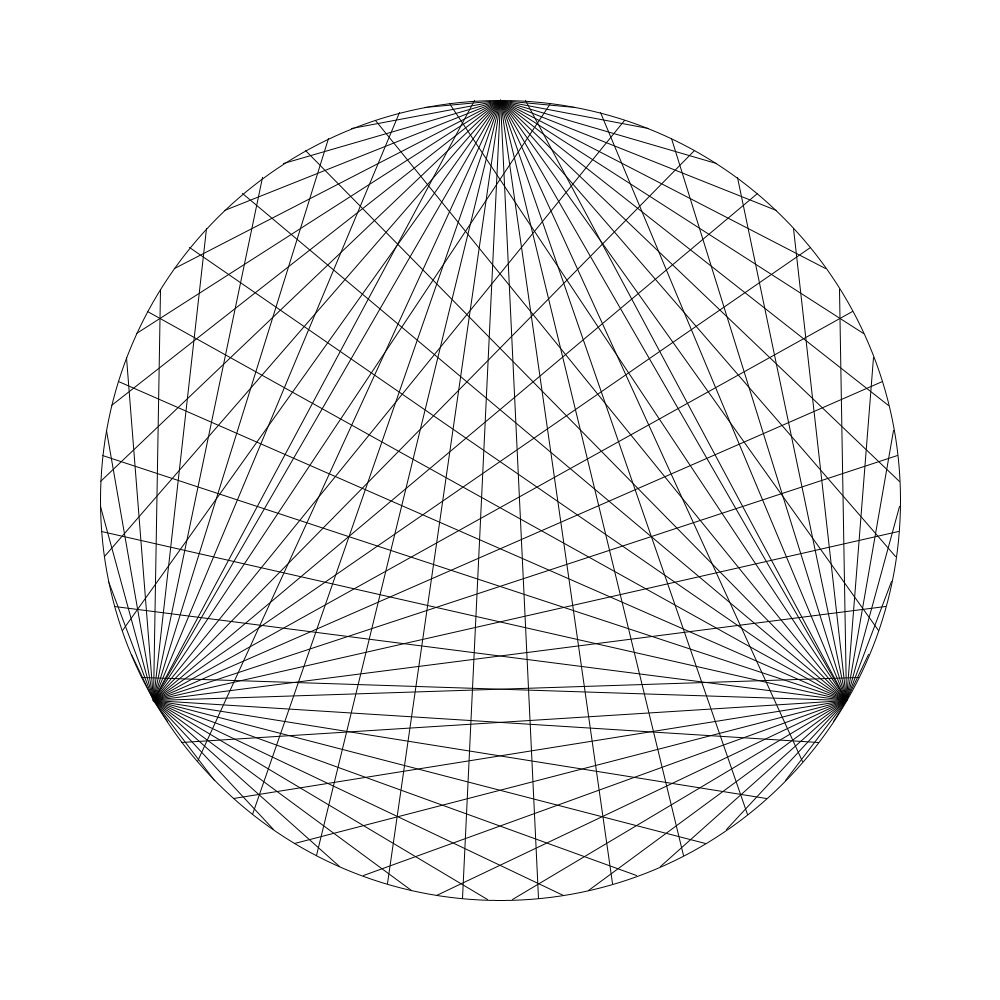}
         \caption{$\MMT(99,33)$}
         \label{fig:MMT-99-33}
     \end{subfigure}
     \begin{subfigure}[b]{0.2\textwidth}
         \centering
         \includegraphics[width=\textwidth]{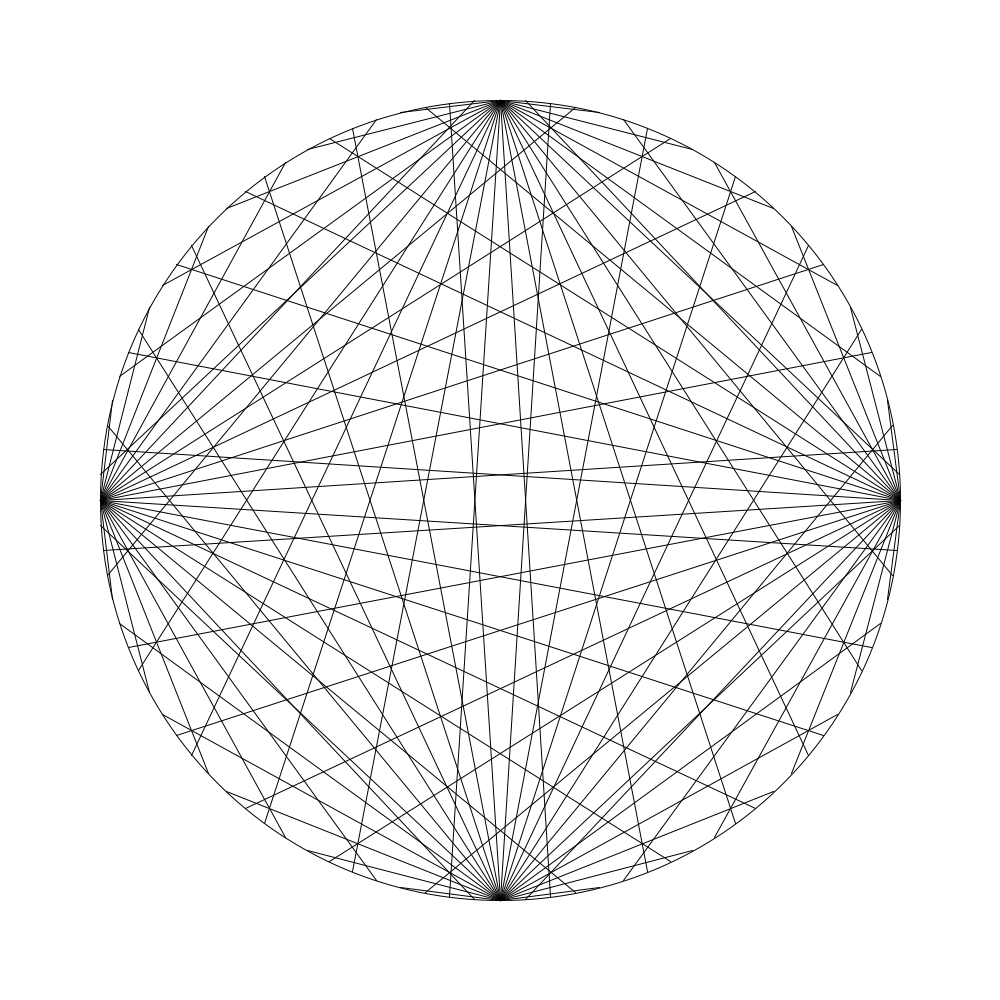}
         \caption{$\MMT(100,25)$}
         \label{fig:MMT-100-25}
     \end{subfigure}
     \begin{subfigure}[b]{0.2\textwidth}
         \centering
         \includegraphics[width=\textwidth]{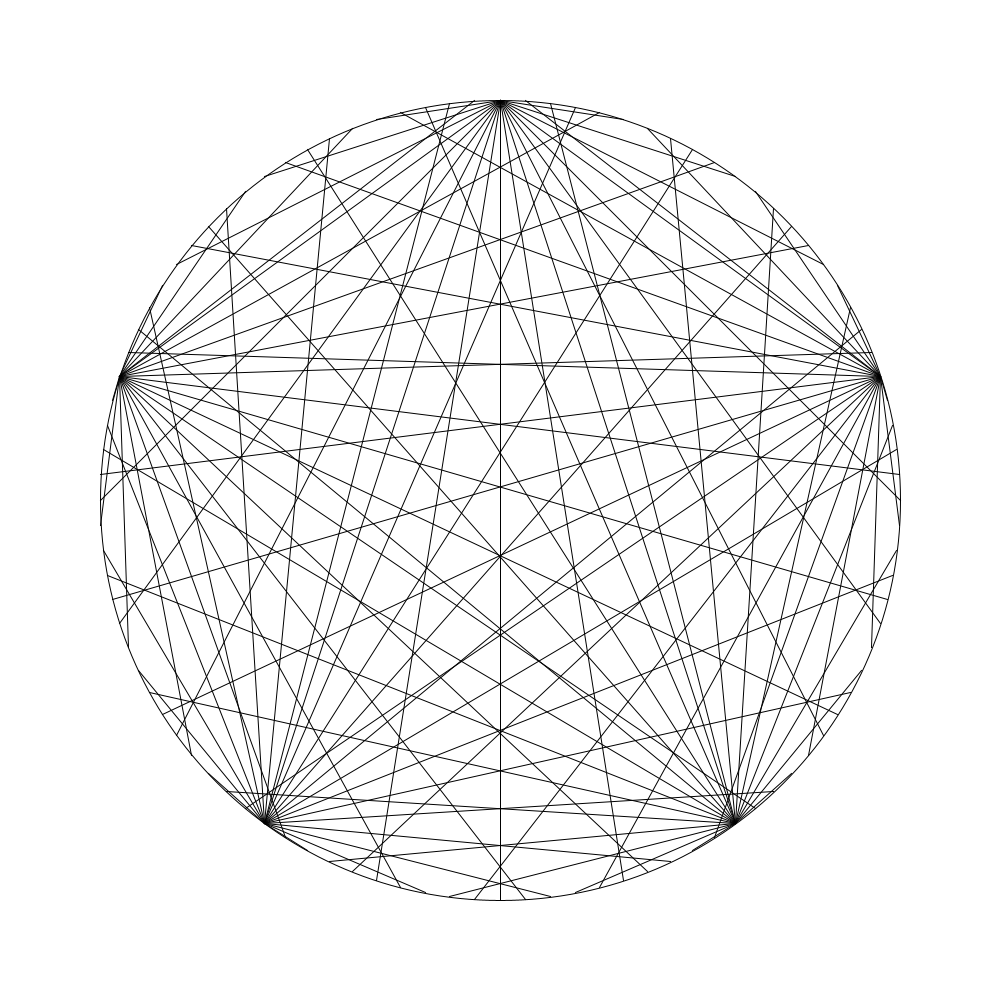}
         \caption{$\MMT(100,20)$}
         \label{fig:MMT-100-20}
     \end{subfigure} \\
     \begin{subfigure}[b]{0.2\textwidth}
         \centering
         \includegraphics[width=\textwidth]{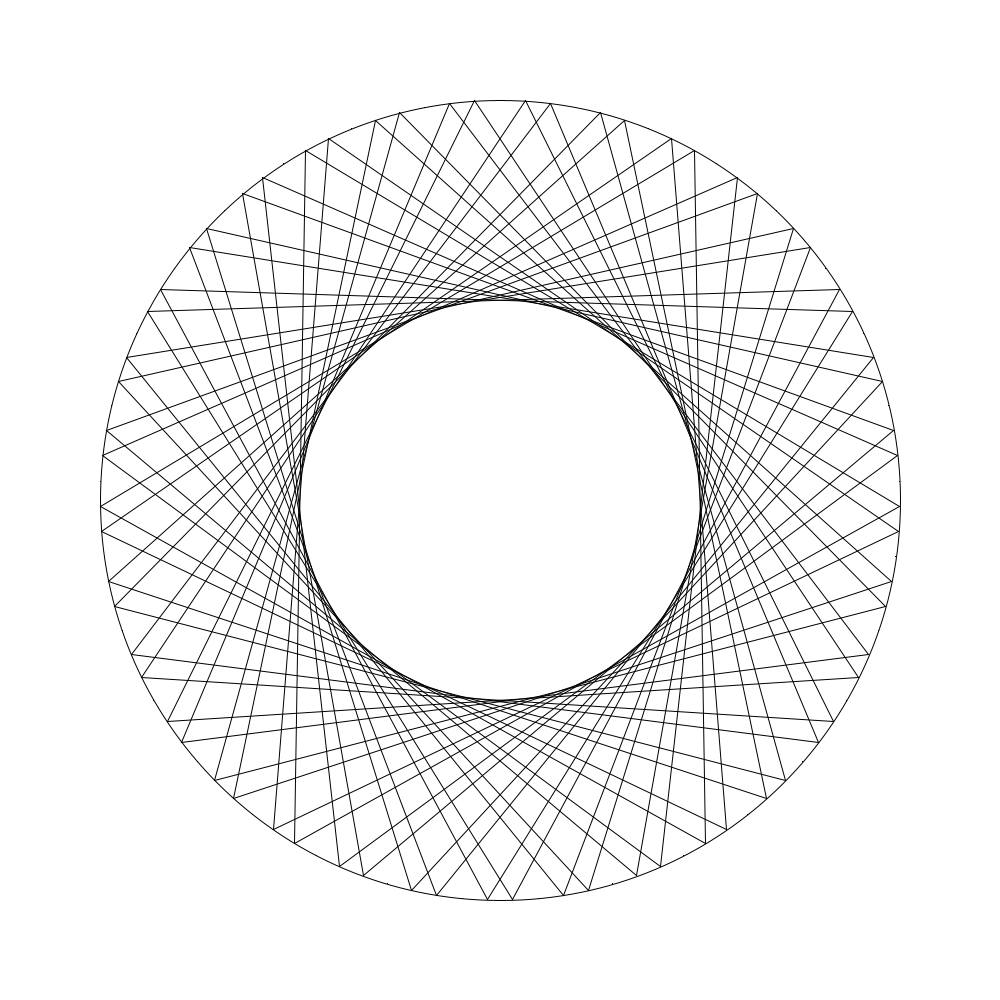}
         \caption{$\MMT(99,34)$}
         \label{fig:MMT-99-34}
     \end{subfigure}
     \begin{subfigure}[b]{0.2\textwidth}
         \centering
         \includegraphics[width=\textwidth]{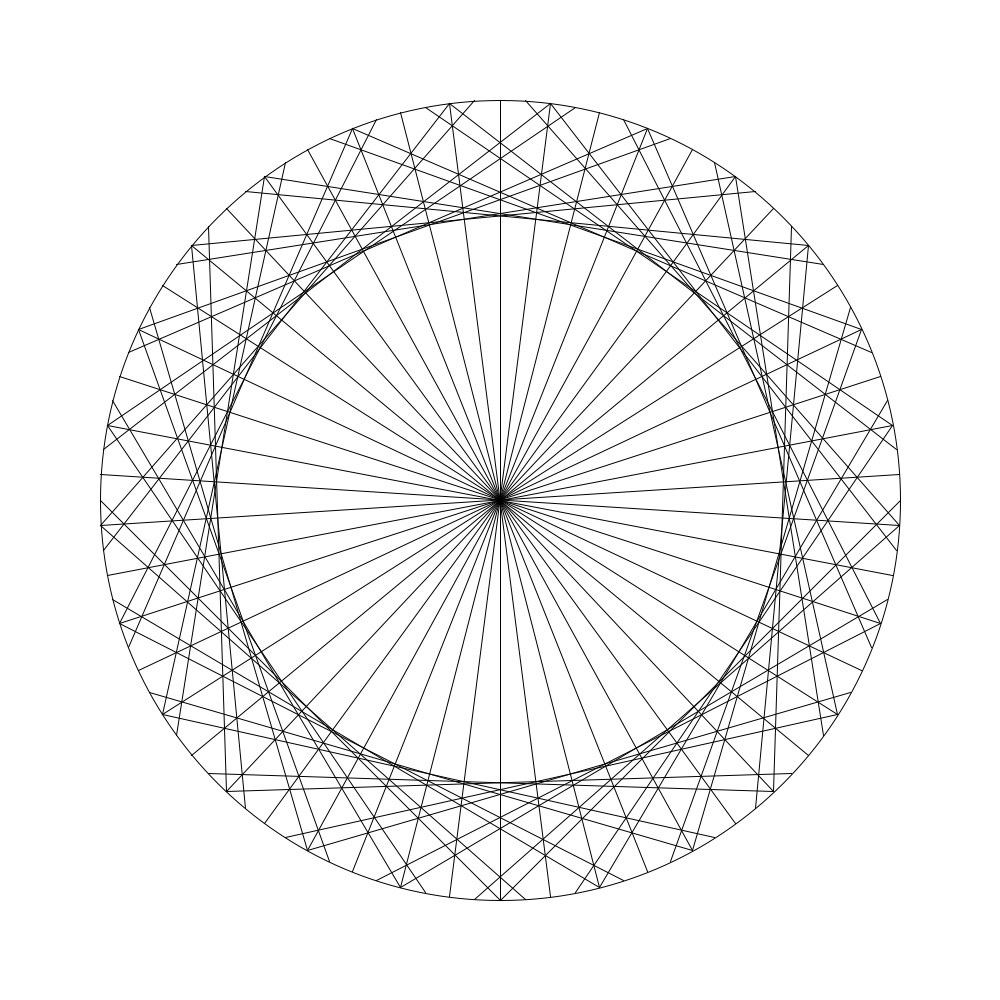}
         \caption{$\MMT(100,26)$}
         \label{fig:MMT-100-26}
     \end{subfigure}
     \begin{subfigure}[b]{0.2\textwidth}
         \centering
         \includegraphics[width=\textwidth]{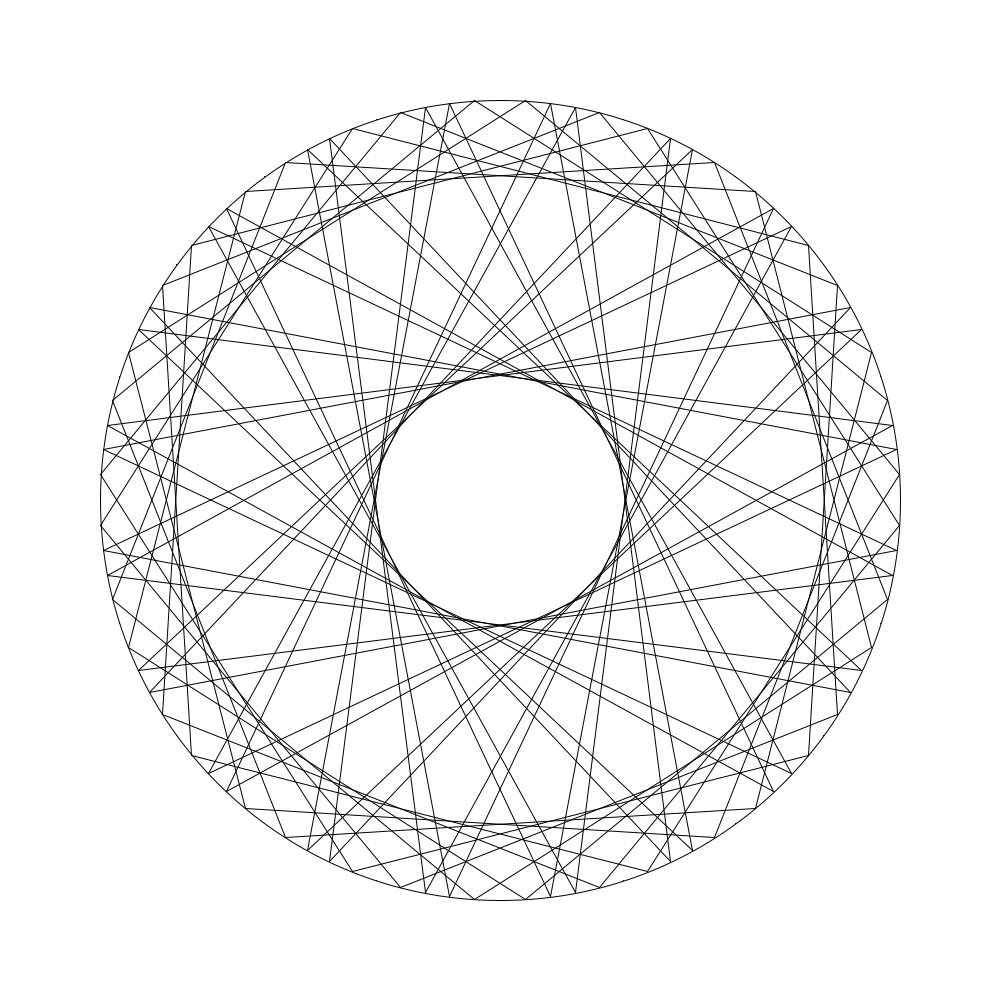}
         \caption{$\MMT(100,21)$}
         \label{fig:MMT-100-21}
     \end{subfigure} \\
    \begin{subfigure}[b]{0.2\textwidth}
         \centering
         \includegraphics[width=\textwidth]{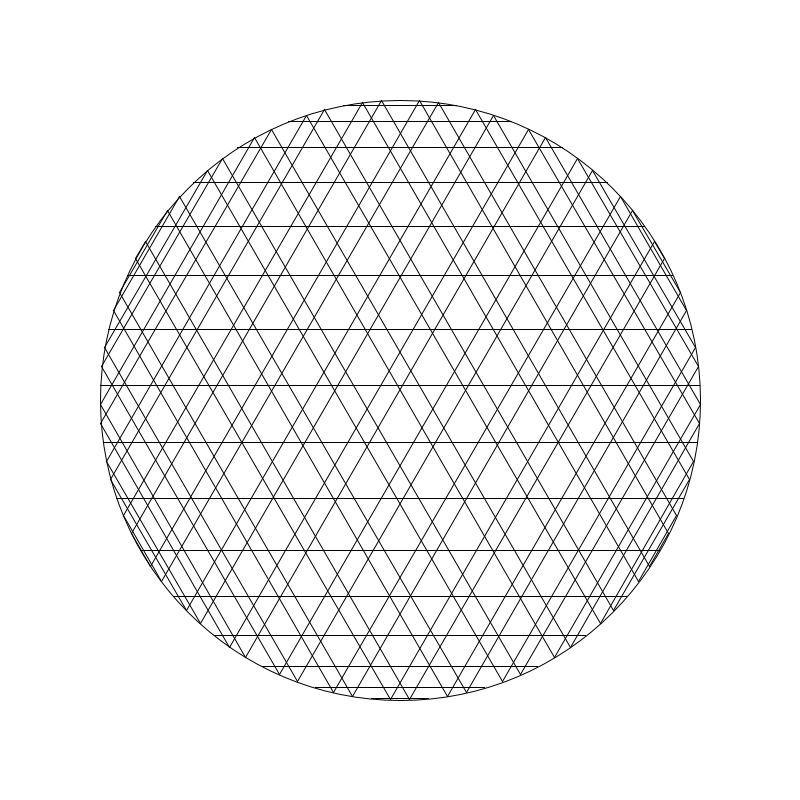}
         \caption{$\MMT(99,32)$}
         \label{fig:MMT-99-32}
     \end{subfigure}
     \begin{subfigure}[b]{0.2\textwidth}
         \centering
         \includegraphics[width=\textwidth]{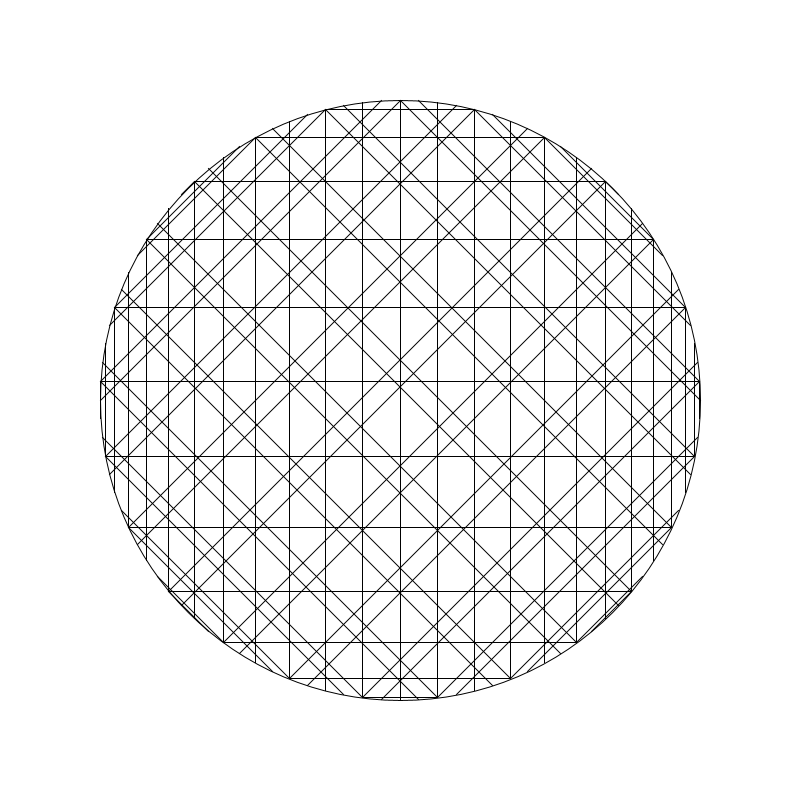}
         \caption{$\MMT(100,24)$}
         \label{fig:MMT-100-24}
     \end{subfigure}
     \begin{subfigure}[b]{0.2\textwidth}
         \centering
         \includegraphics[width=\textwidth]{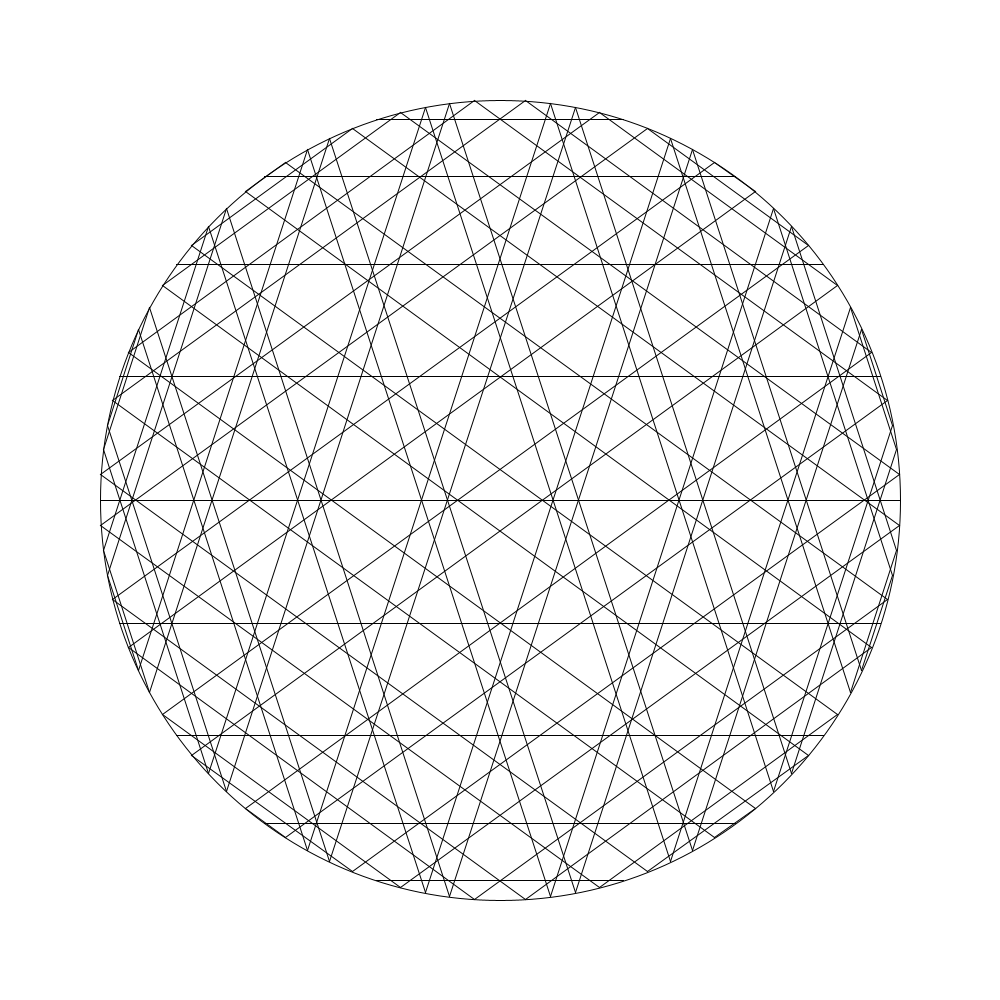}
         \caption{$\MMT(100, 19)$}
         \label{fig:MMT-100-19}
     \end{subfigure}
    \caption{The top row shows examples of graphs of the form $\MMT(m, \frac{m}{b})$ with $b = 3, 4, 5$ from left to right. The second row shows examples of graphs of the form $\MMT(m, \frac{m}{b} + 1)$  with the same $b$ values. The last row shows graphs of the form $\MMT(m, \frac{m}{b} - 1)$ again with the same values for $b$.}
    \label{fig:m-div-b-examples}
\end{figure}

As I was considering these families of modular stitch graphs, I wondered if we could define a similar kind of graph without the assumption that $m$ is divisible by $b$. I decided to look at $\MMT(m,a)$ where $b \nmid m$ and $a = \lceil \frac{m}{b} \rceil$ or $a = \lfloor \frac{m}{b} \rfloor$, since I still wanted $a$ to be an integer. I was amazed to find many clear patterns. Figure~\ref{fig:table_of_tables} shows graphs of the form $\MMT(m, \lceil \frac{m}{b} \rceil)$. The rows are indexed by the value for $b$ and the columns are indexed by the remainder of $m$ mod $b$. I encourage the reader to take a minute and find patterns within this ``grid of graphs''.

\begin{figure}
\setlength\tabcolsep{2pt}
\begin{tabularx}{\textwidth}{@{}c*{9}{C}@{}}
    & 1& & & & & & & & \\
    2 &
   \includegraphics[align=c, width=\linewidth, height=\linewidth, keepaspectratio]{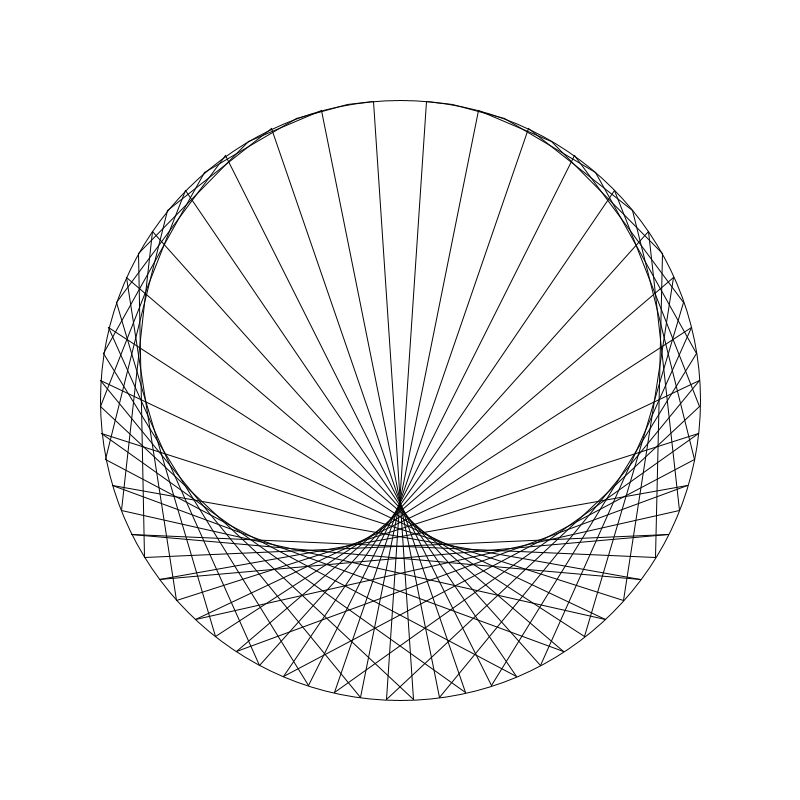} & 2 & & & & & & & \\
   3 &
   \includegraphics[align=c, width=\linewidth, height=\linewidth, keepaspectratio]{"images/LargeTable/34-100".png} & \includegraphics[align=c, width=\linewidth, height=\linewidth, keepaspectratio]{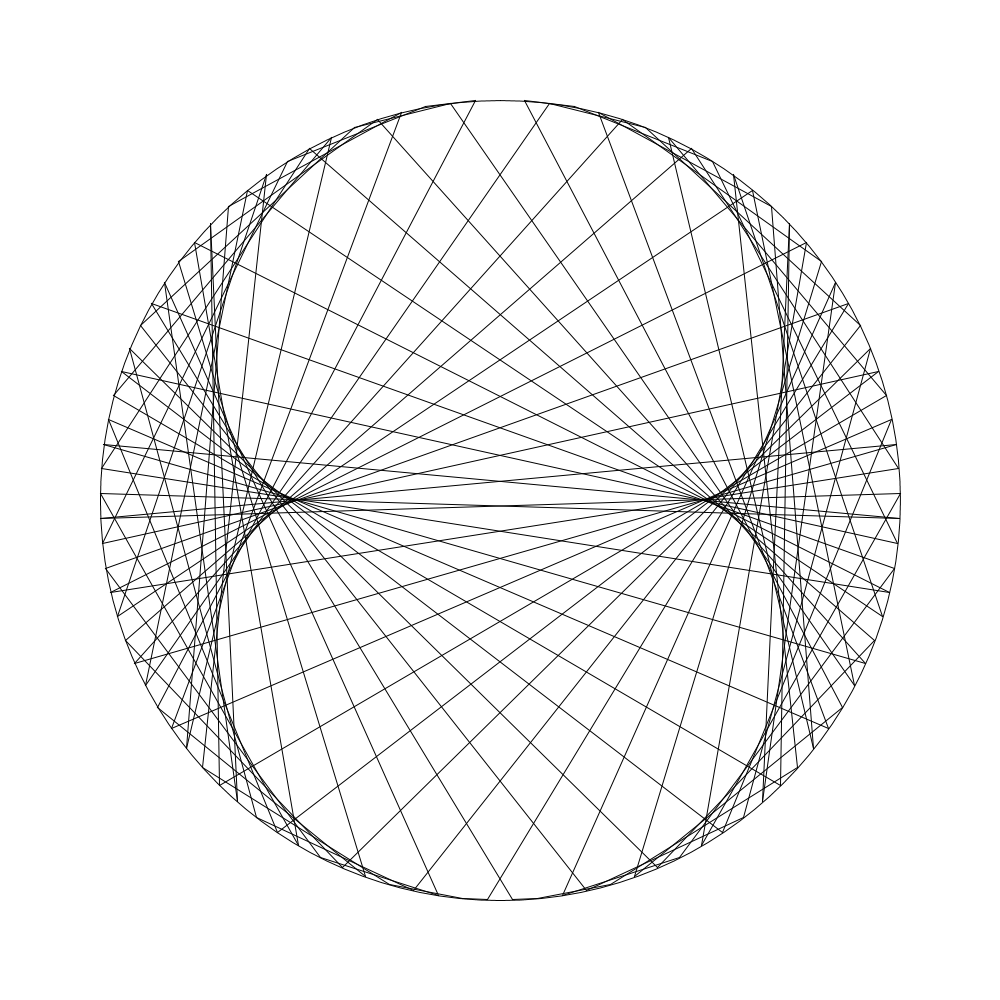}& 3 & & & & & & \\
   4 &
   \includegraphics[align=c, width=\linewidth, height=\linewidth, keepaspectratio]{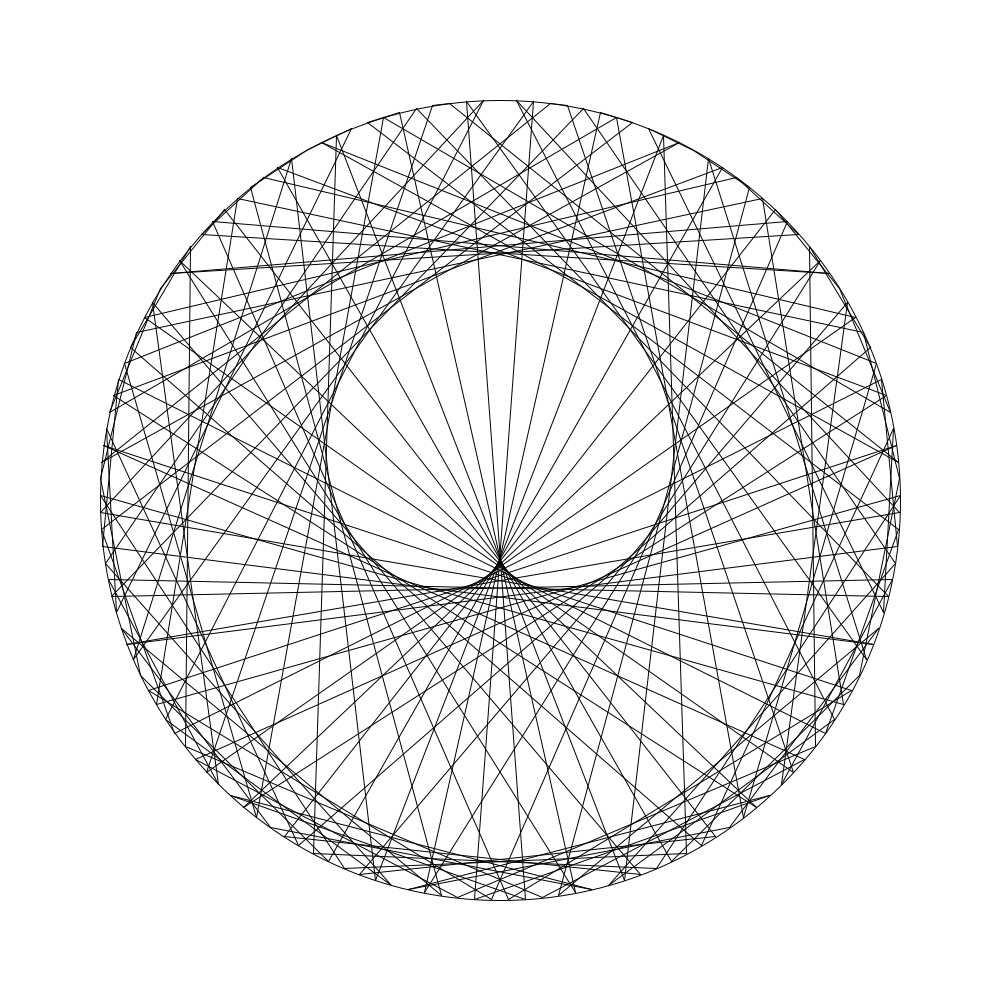} & \includegraphics[align=c, width=\linewidth, height=\linewidth, keepaspectratio]{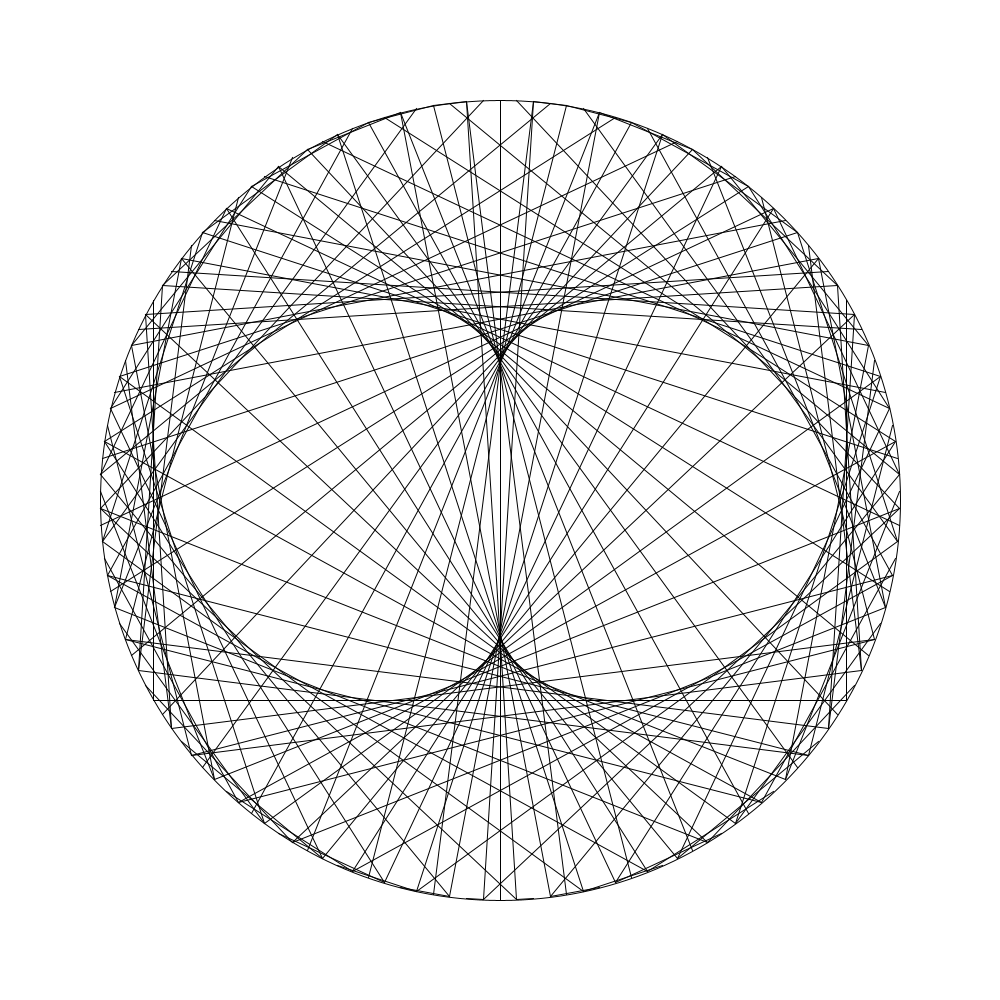} & \includegraphics[align=c, width=\linewidth, height=\linewidth, keepaspectratio]{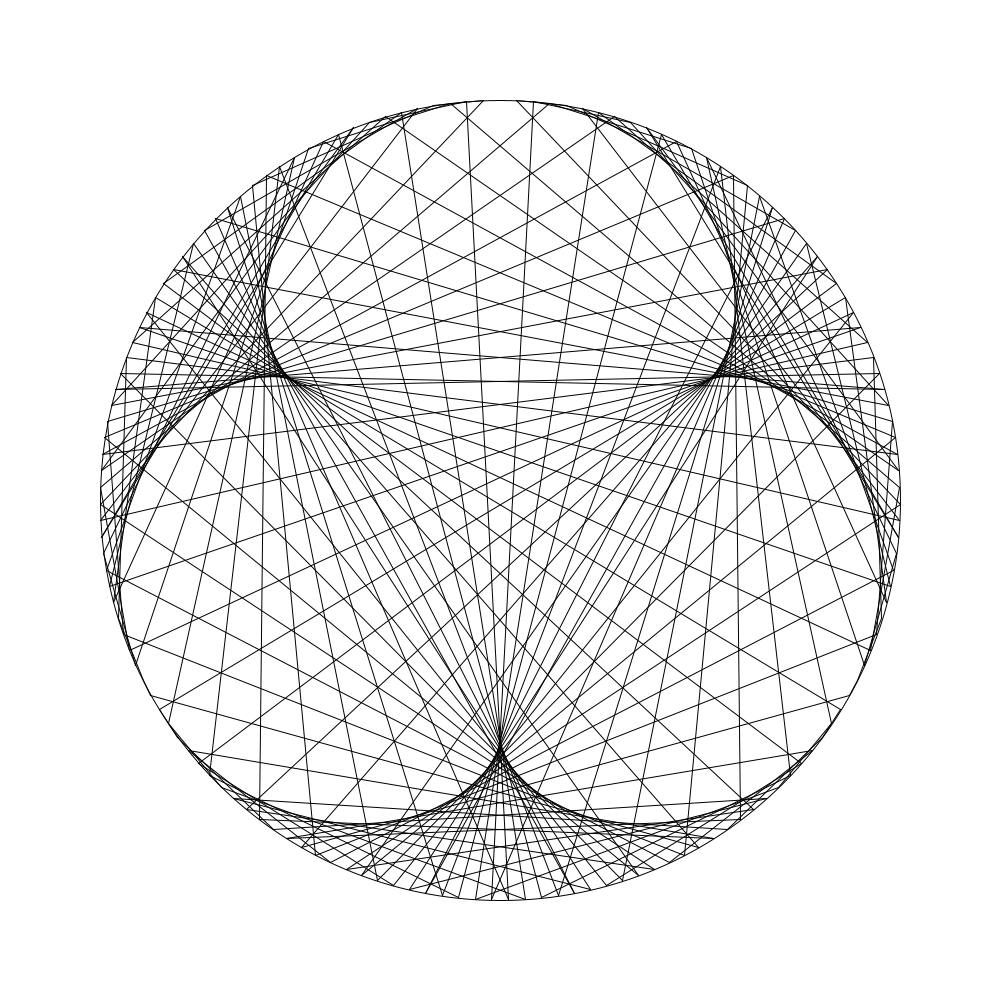} & 4 & & & & & \\
    5 &
   \includegraphics[align=c, width=\linewidth, height=\linewidth, keepaspectratio]{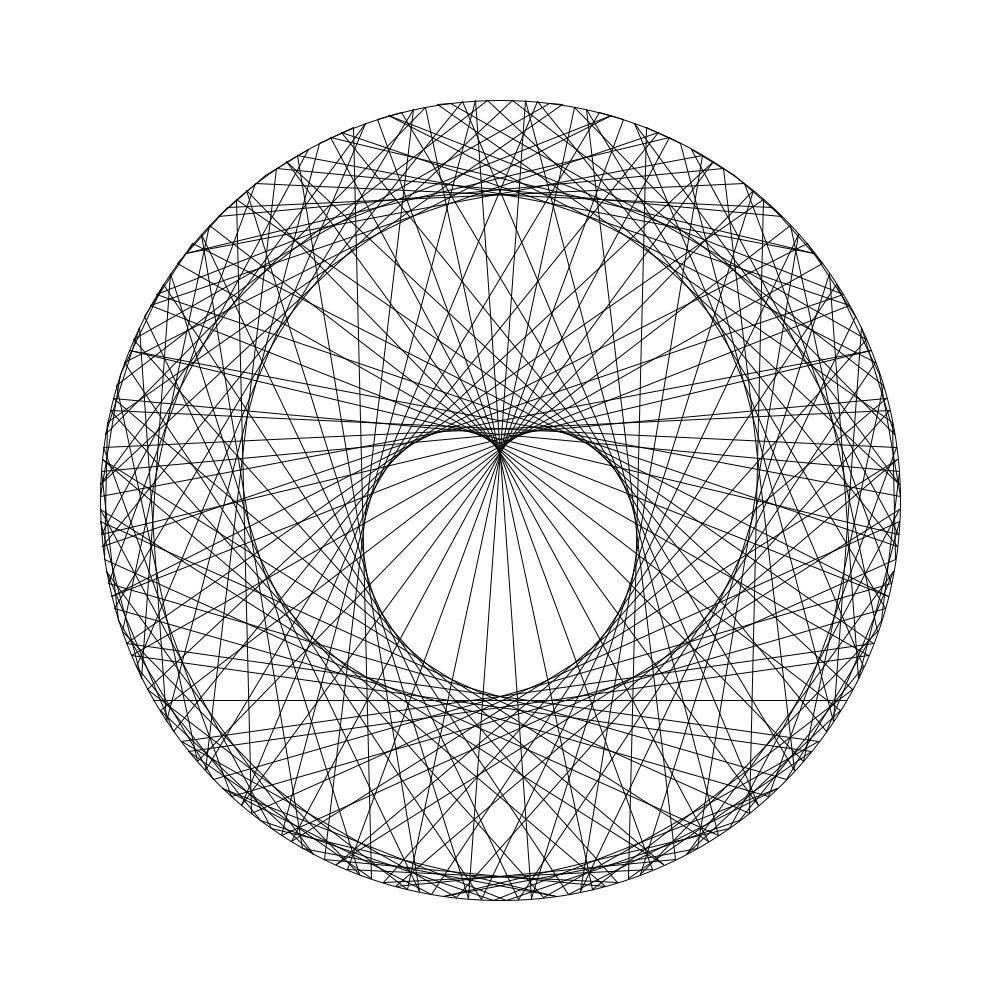} & \includegraphics[align=c, width=\linewidth, height=\linewidth, keepaspectratio]{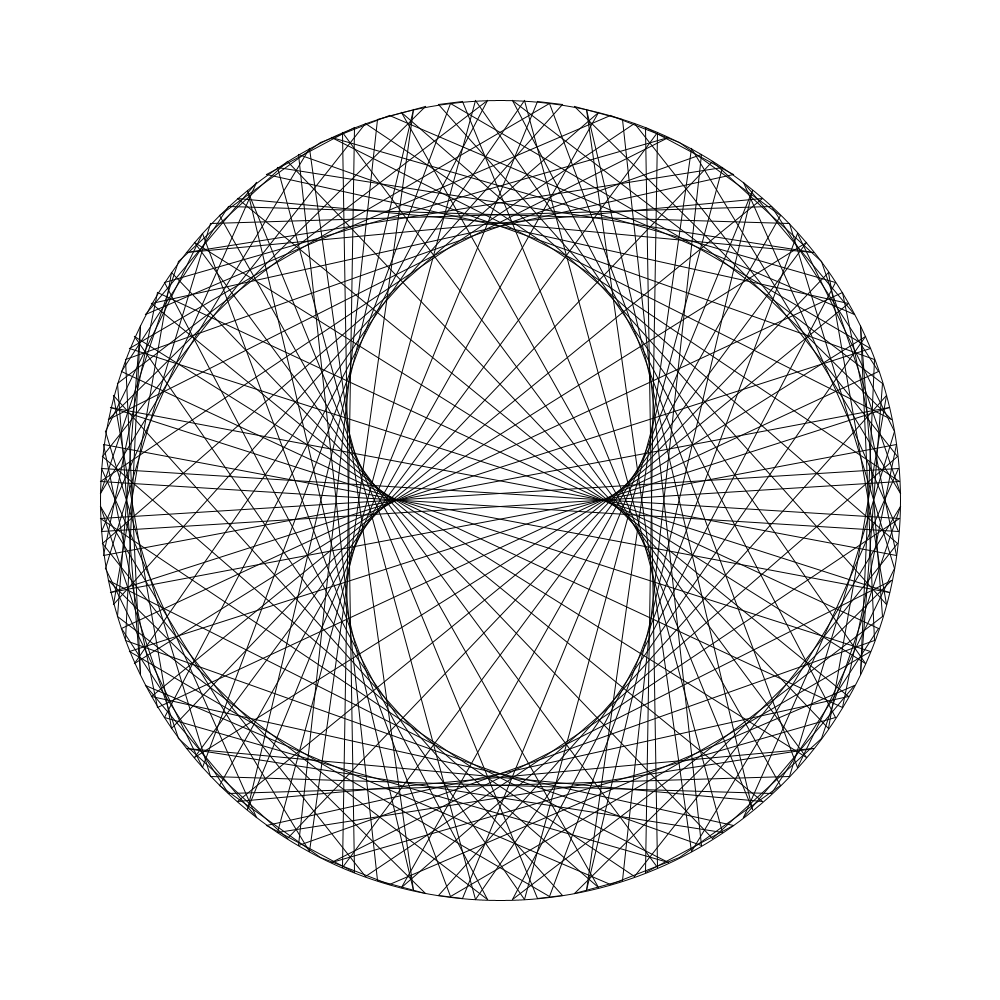}& \includegraphics[align=c, width=\linewidth, height=\linewidth, keepaspectratio]{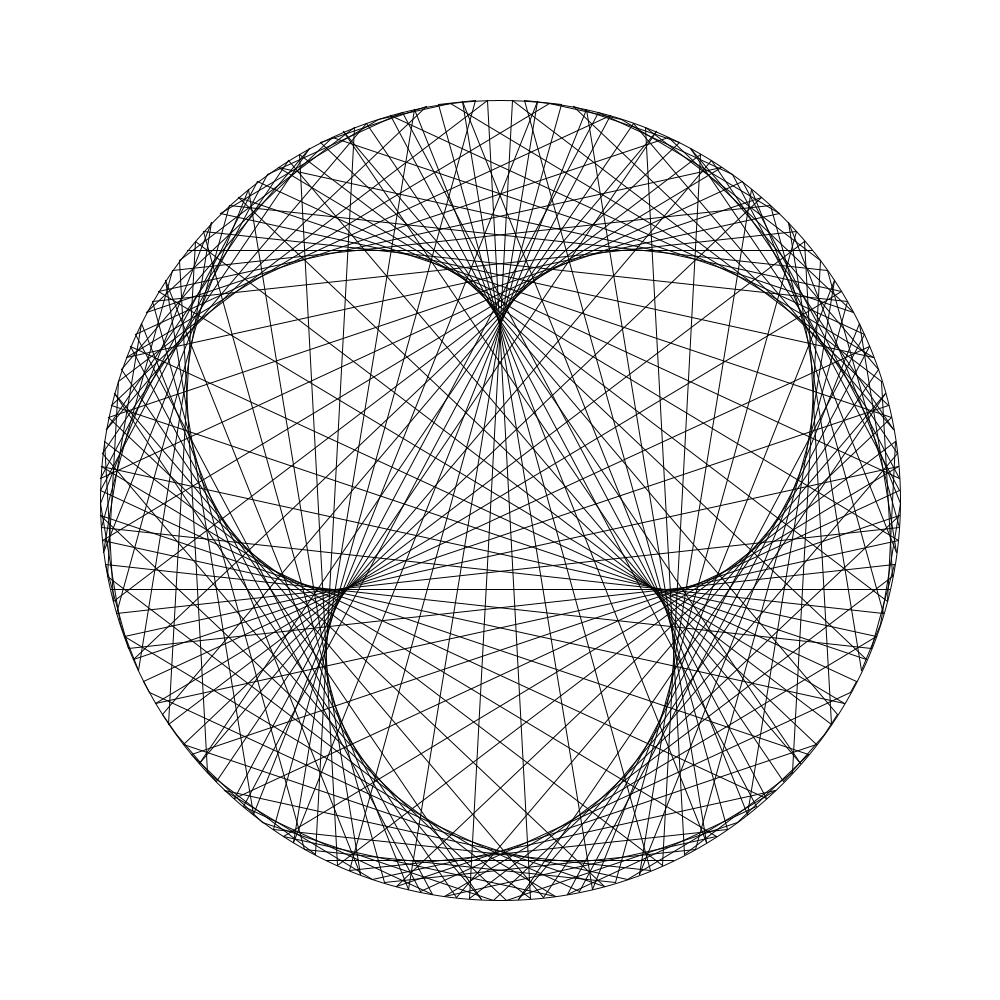}& 
   \includegraphics[align=c, width=\linewidth, height=\linewidth, keepaspectratio]{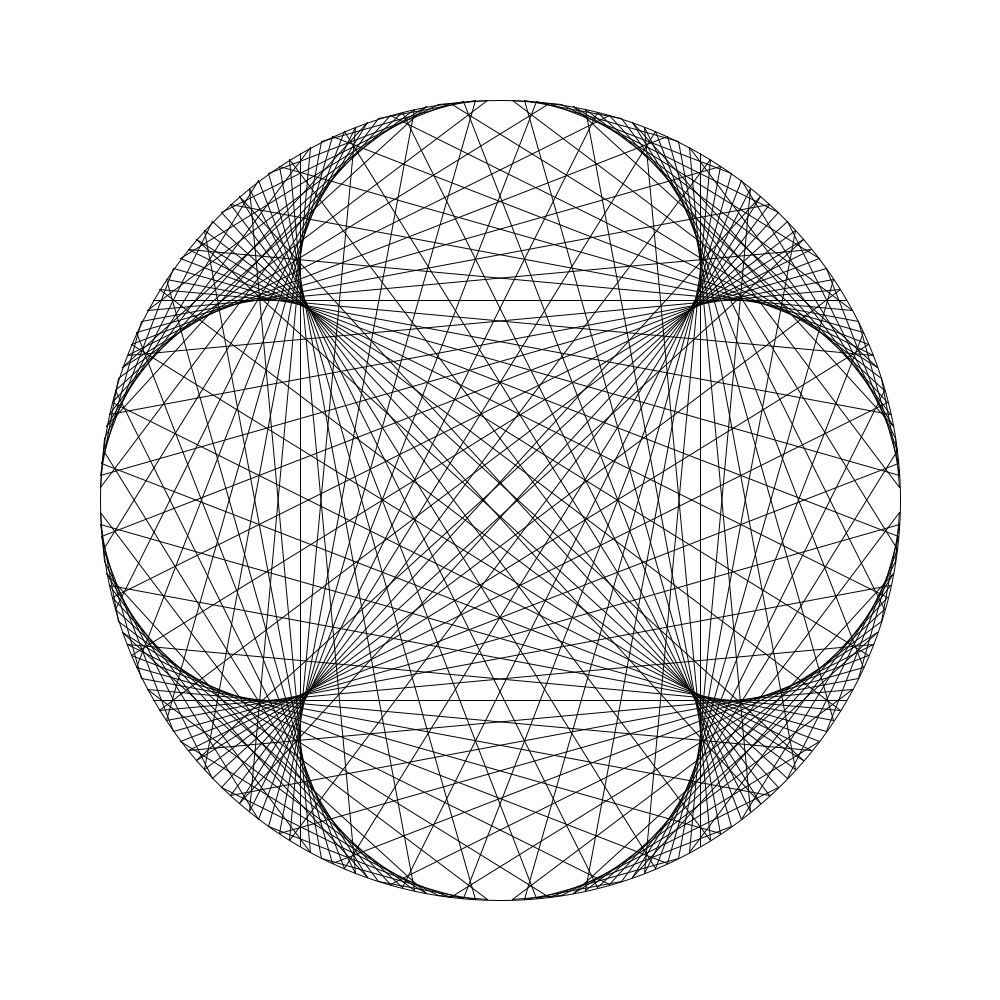}& 5 & & & & \\
   6 &
   \includegraphics[align=c, width=\linewidth, height=\linewidth, keepaspectratio]{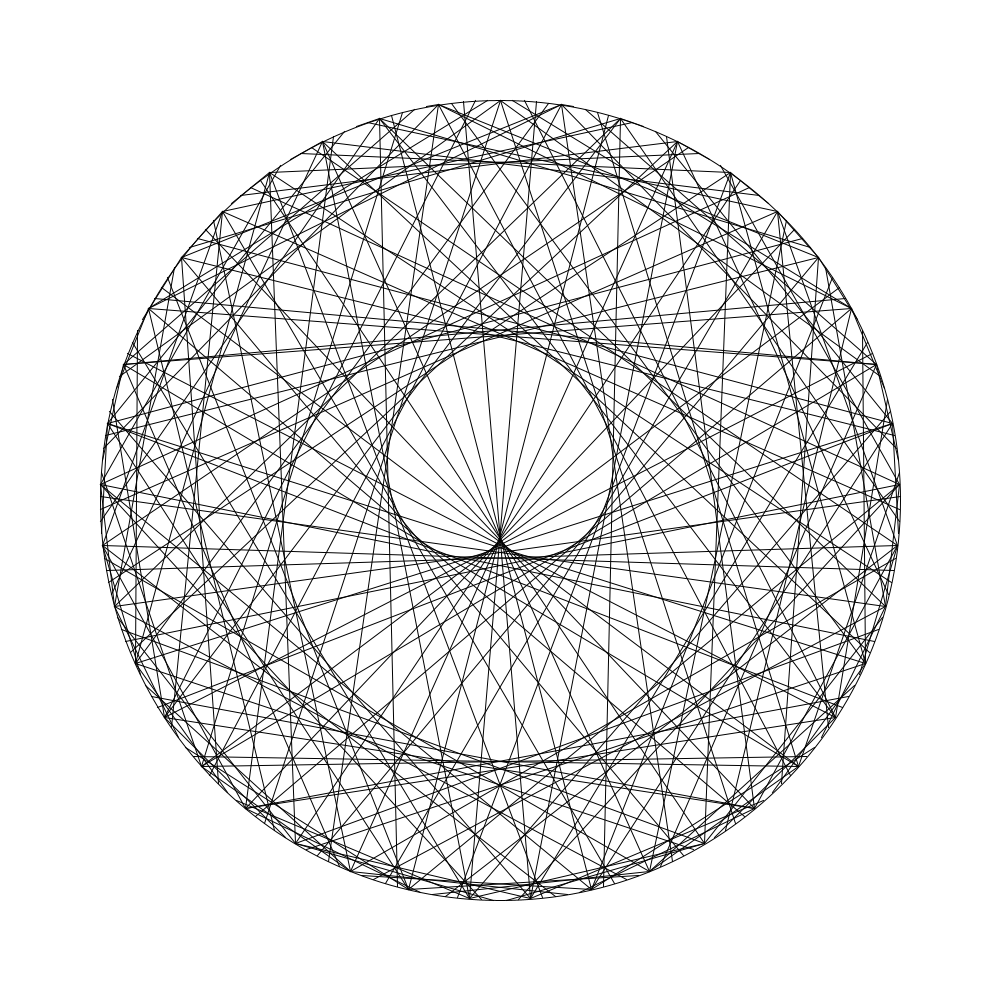} & \includegraphics[align=c, width=\linewidth, height=\linewidth, keepaspectratio]{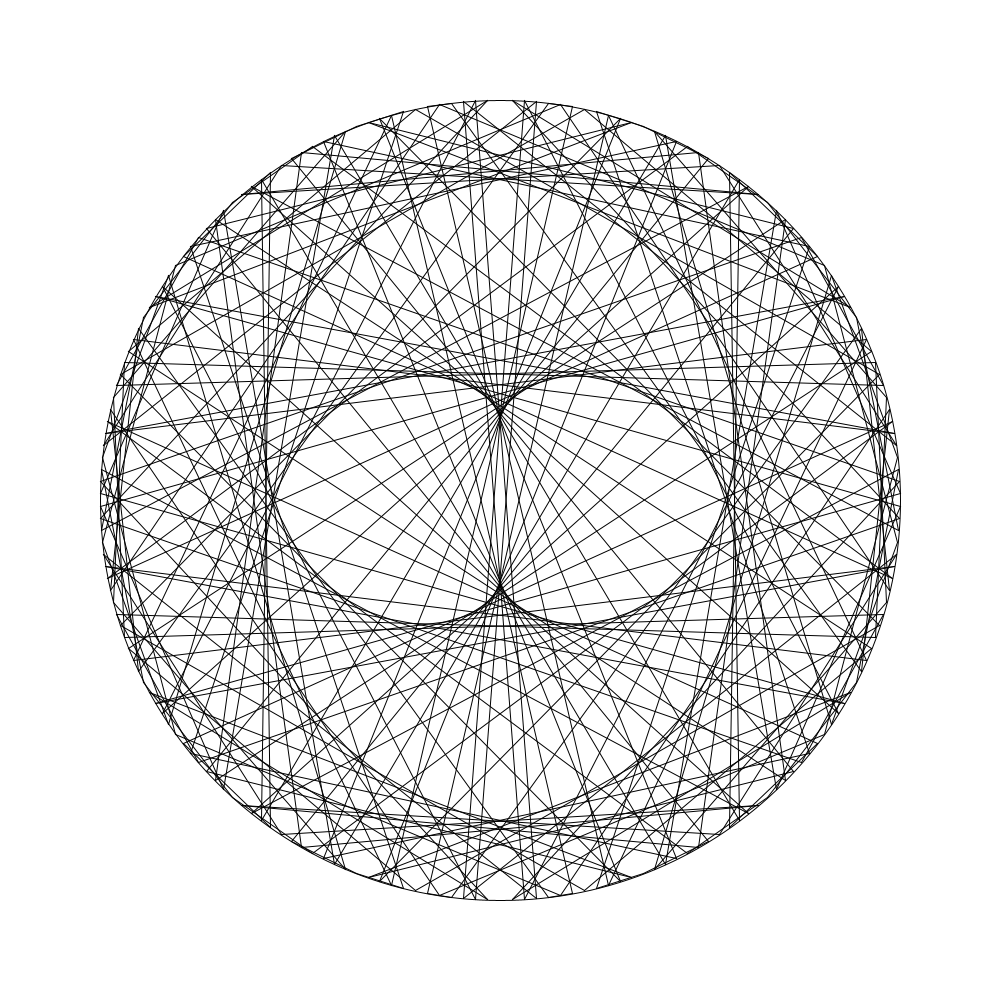}& \includegraphics[align=c, width=\linewidth, height=\linewidth, keepaspectratio]{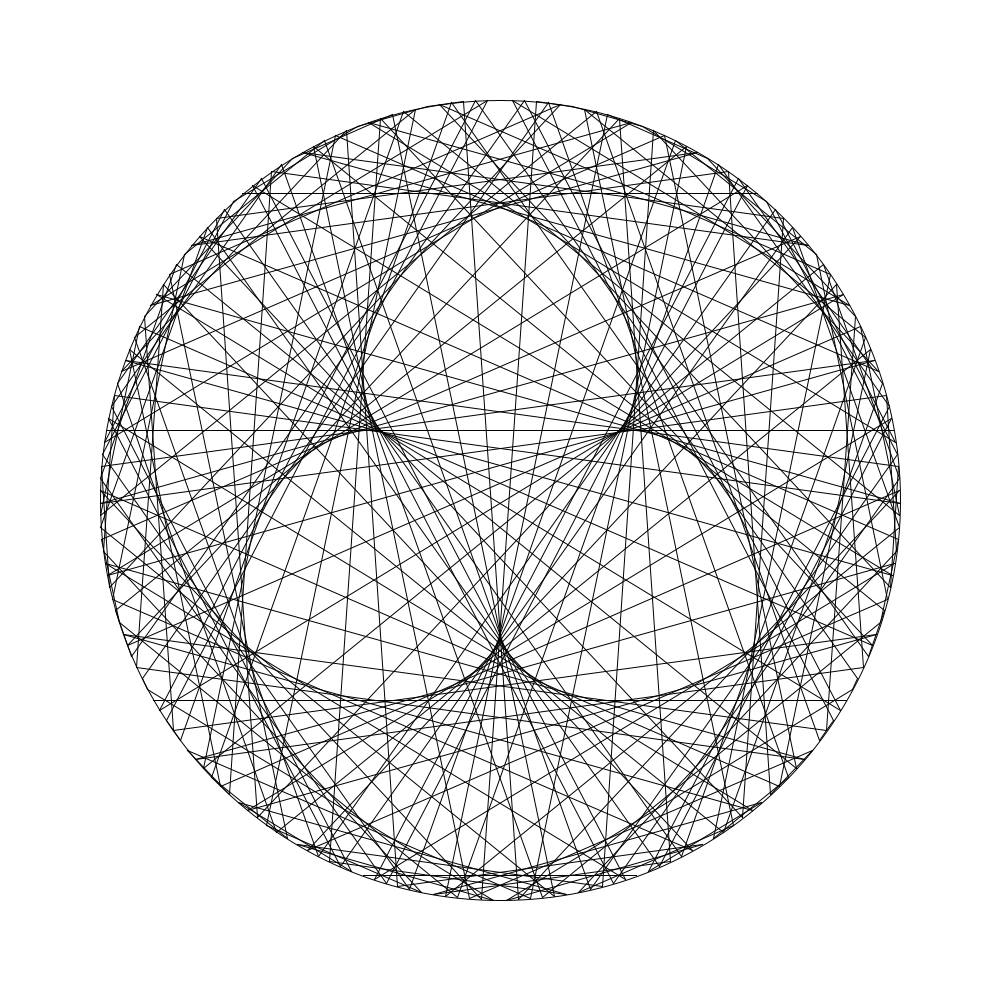}& 
   \includegraphics[align=c, width=\linewidth, height=\linewidth, keepaspectratio]{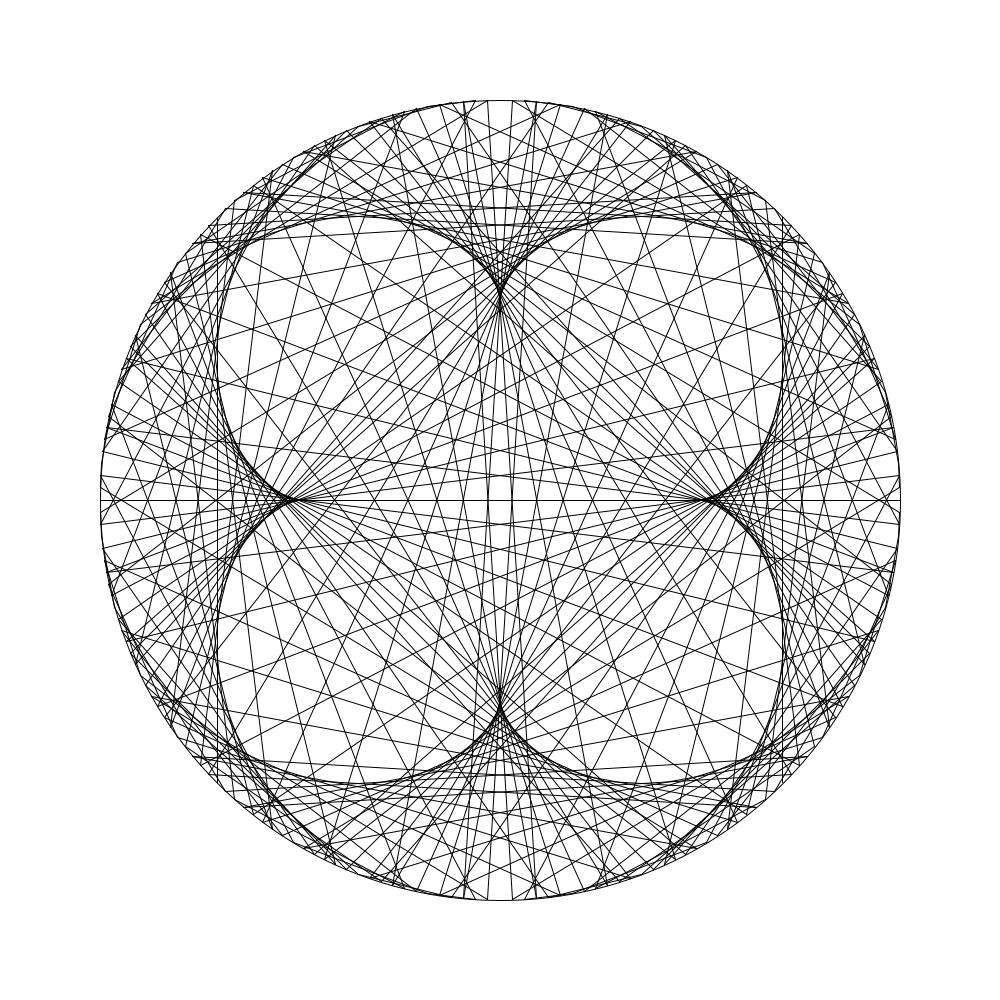}& 
   \includegraphics[align=c, width=\linewidth, height=\linewidth, keepaspectratio]{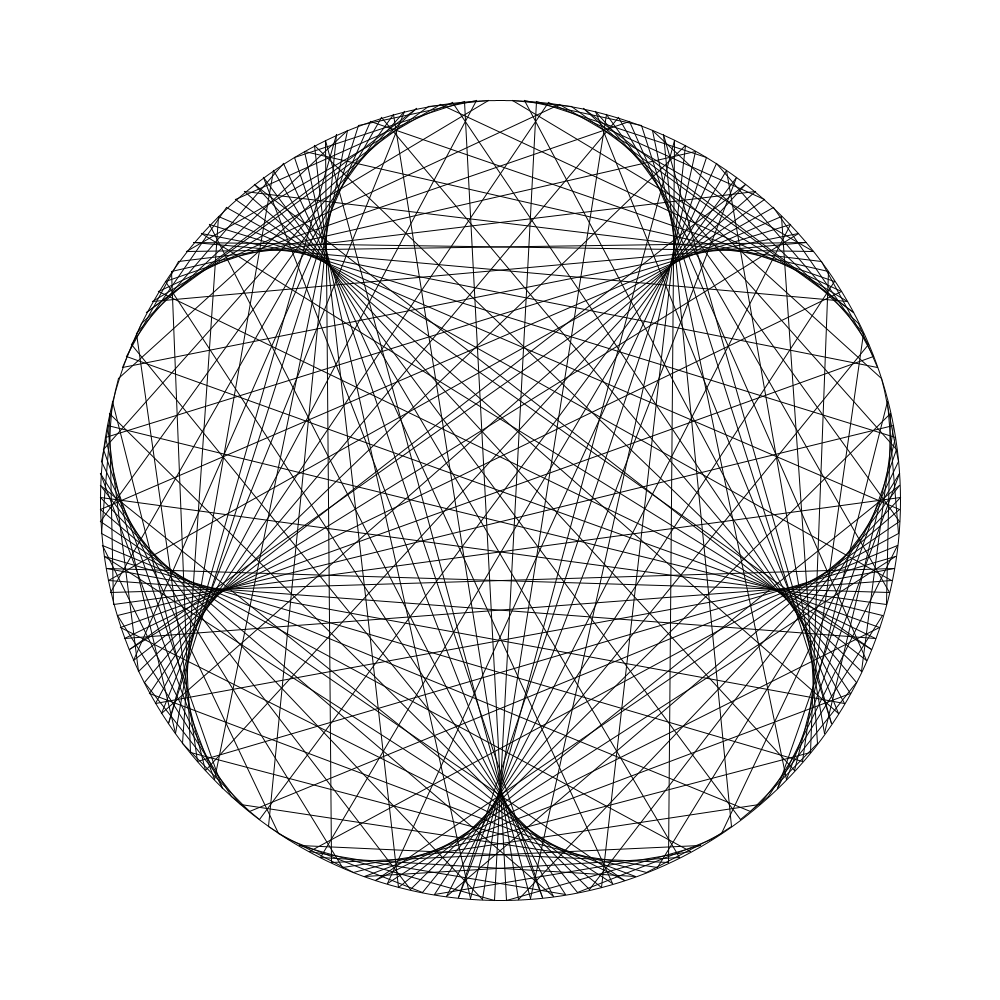}& 6 & & & \\
   7 &
   \includegraphics[align=c, width=\linewidth, height=\linewidth, keepaspectratio]{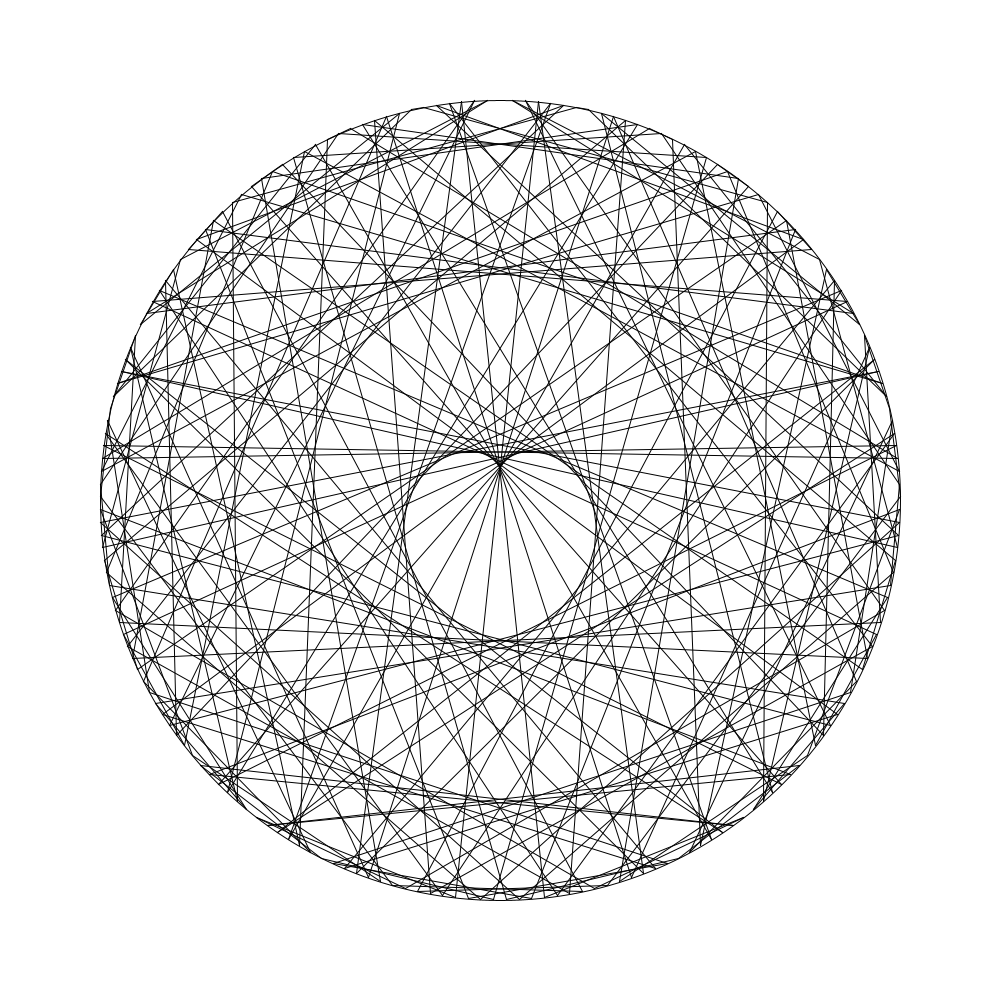} & \includegraphics[align=c, width=\linewidth, height=\linewidth, keepaspectratio]{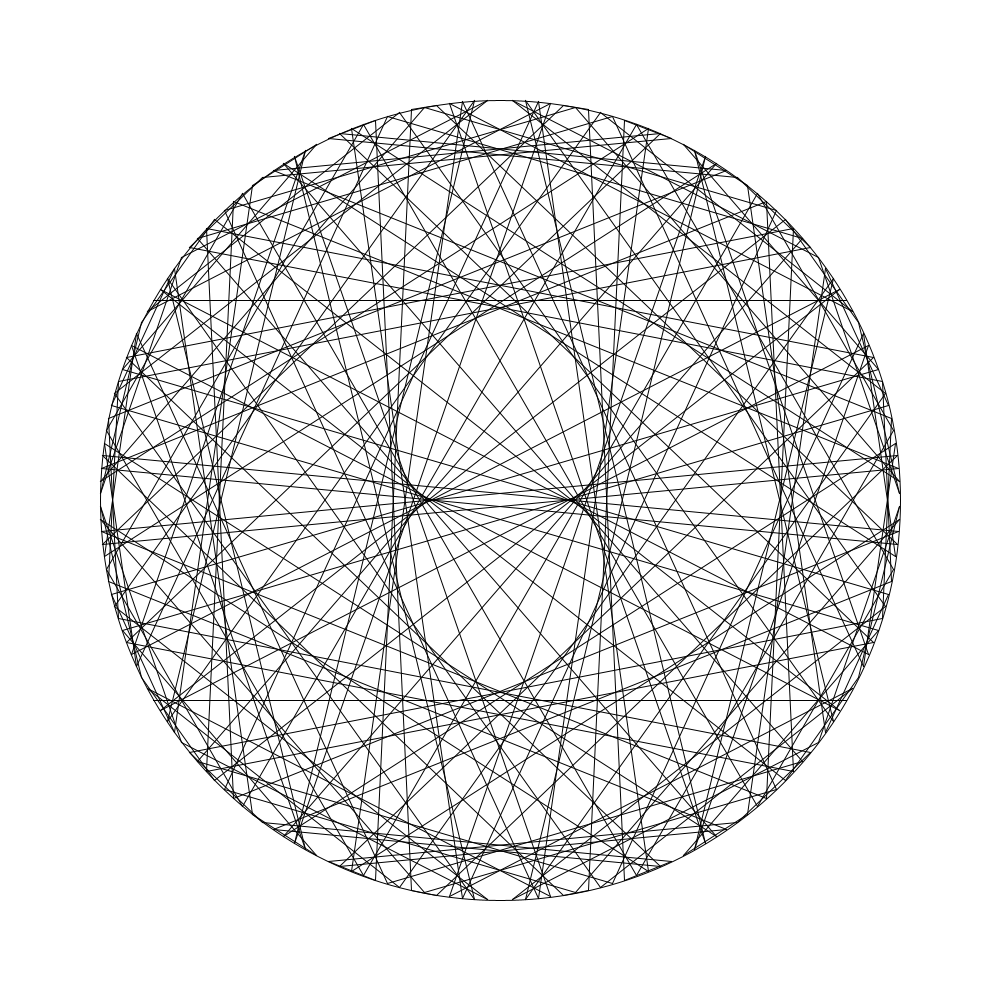}& \includegraphics[align=c, width=\linewidth, height=\linewidth, keepaspectratio]{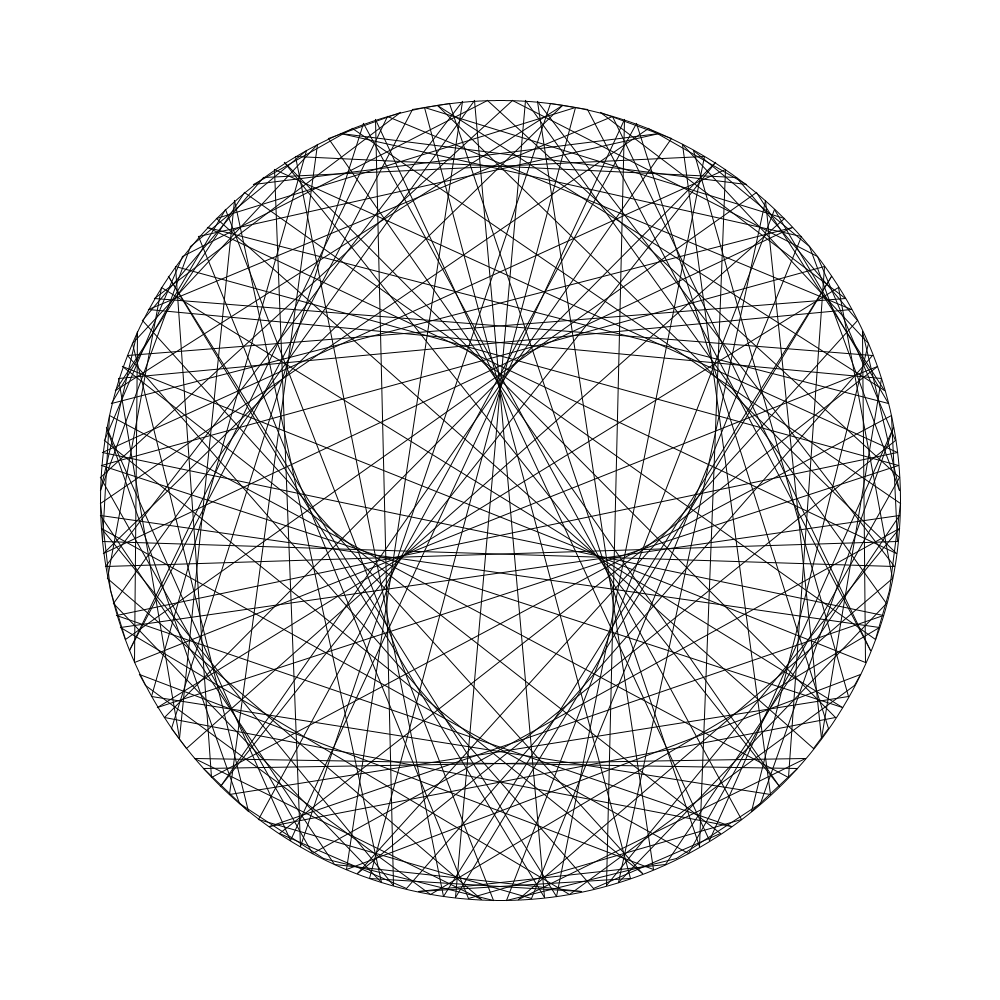}& 
   \includegraphics[align=c, width=\linewidth, height=\linewidth, keepaspectratio]{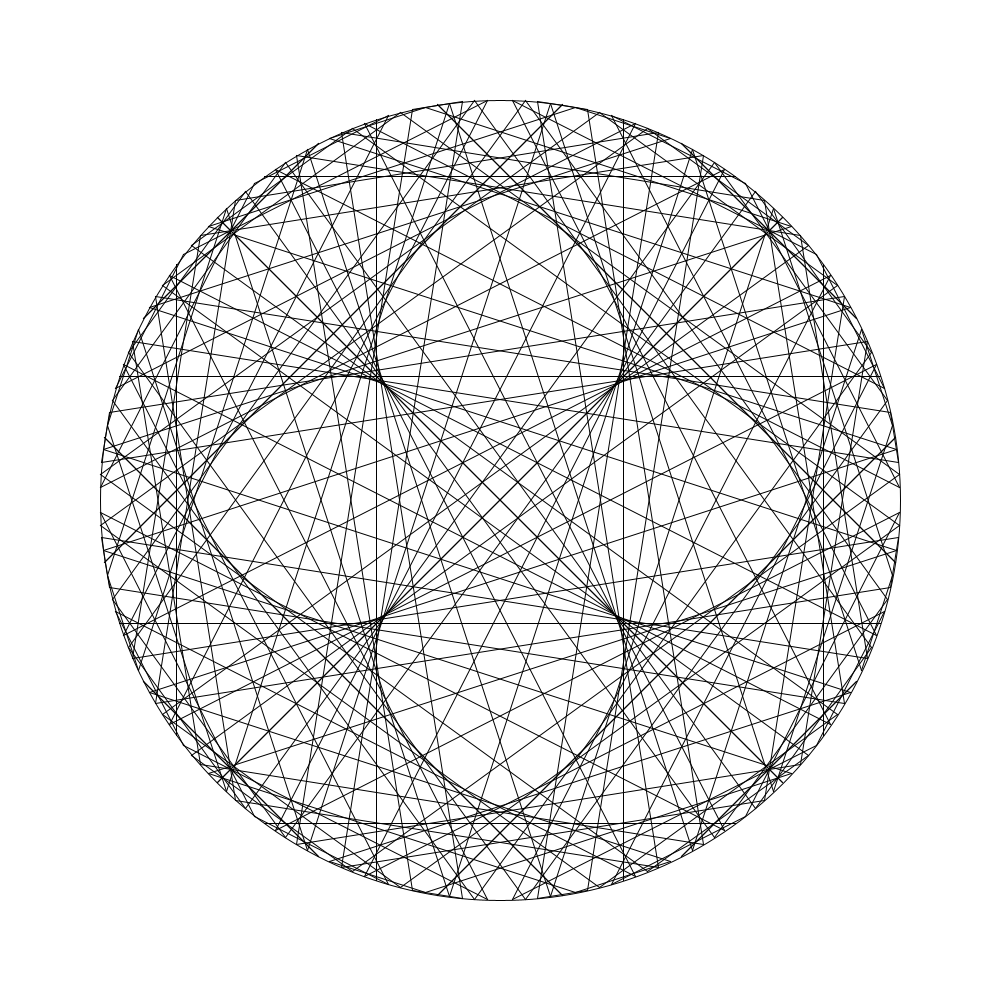}& 
   \includegraphics[align=c, width=\linewidth, height=\linewidth, keepaspectratio]{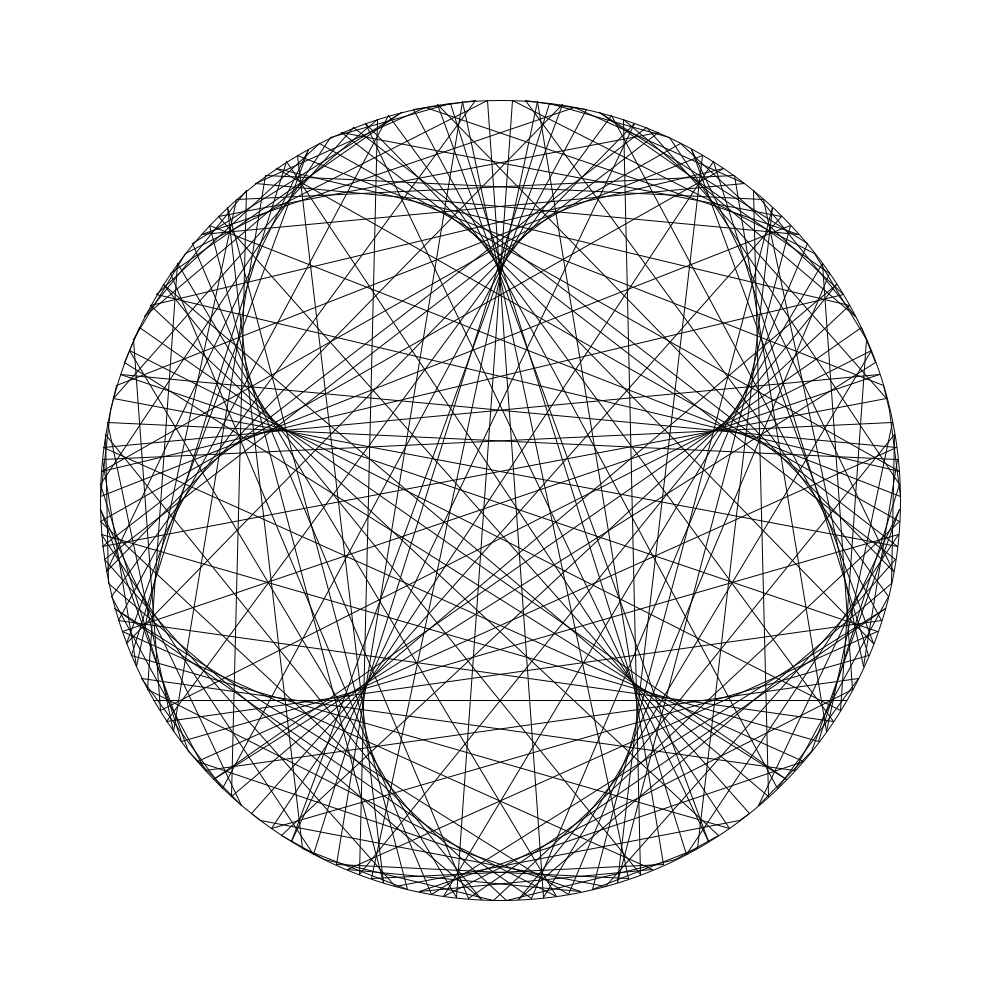}& 
   \includegraphics[align=c, width=\linewidth, height=\linewidth, keepaspectratio]{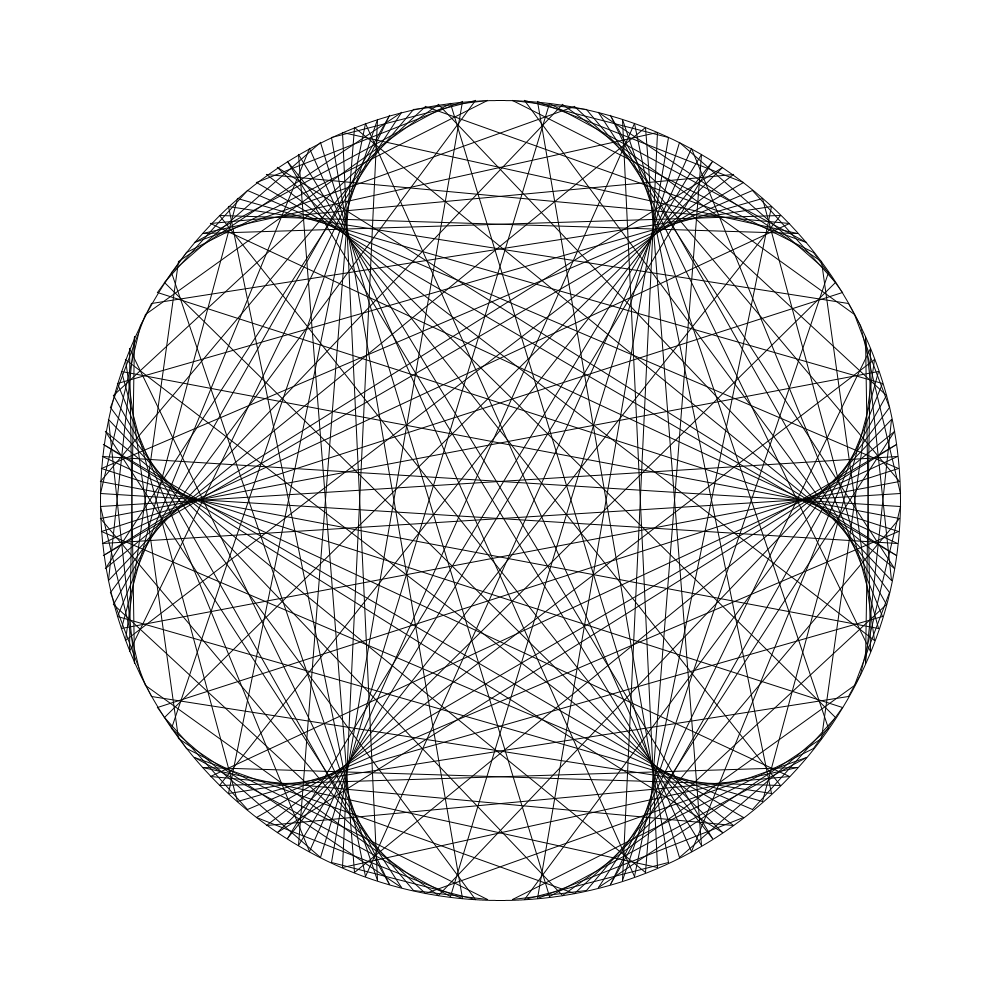}& 7 & & \\
    8 &
   \includegraphics[align=c, width=\linewidth, height=\linewidth, keepaspectratio]{"images/LargeTable/201-26".png} & \includegraphics[align=c, width=\linewidth, height=\linewidth, keepaspectratio]{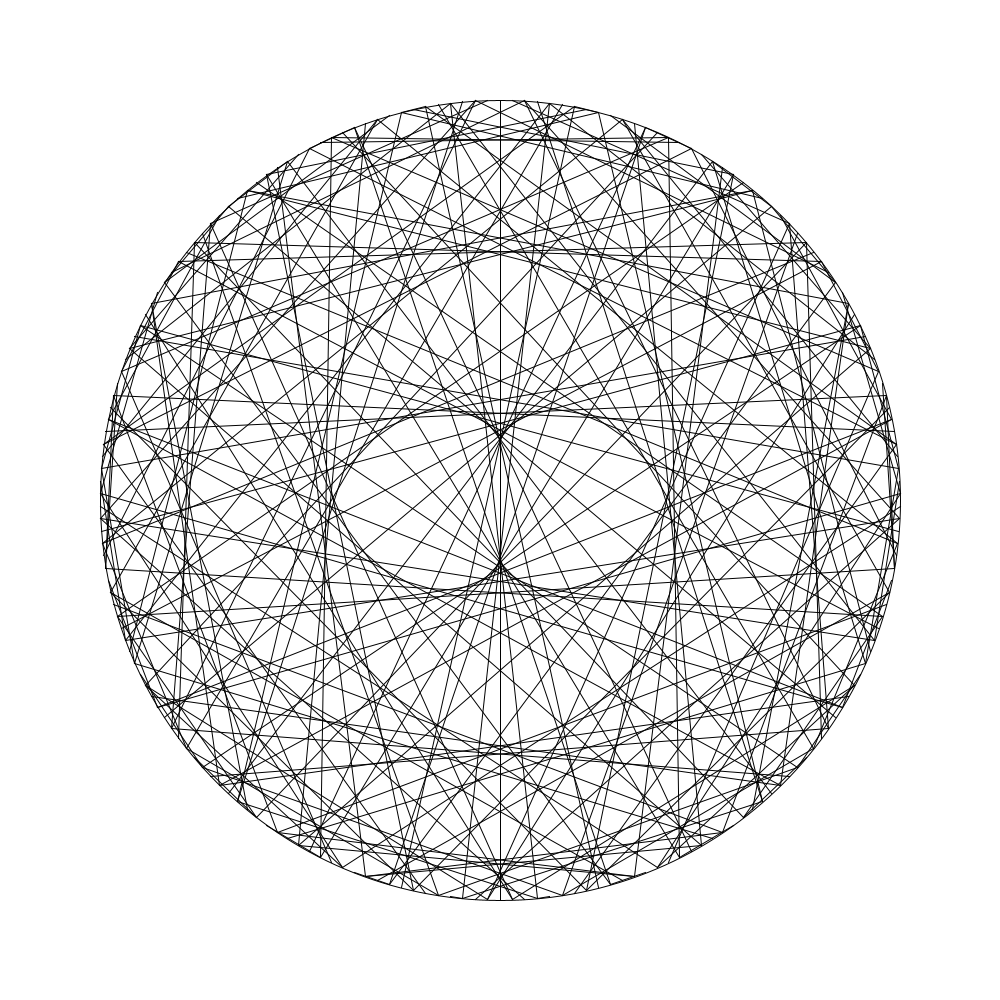}& \includegraphics[align=c, width=\linewidth, height=\linewidth, keepaspectratio]{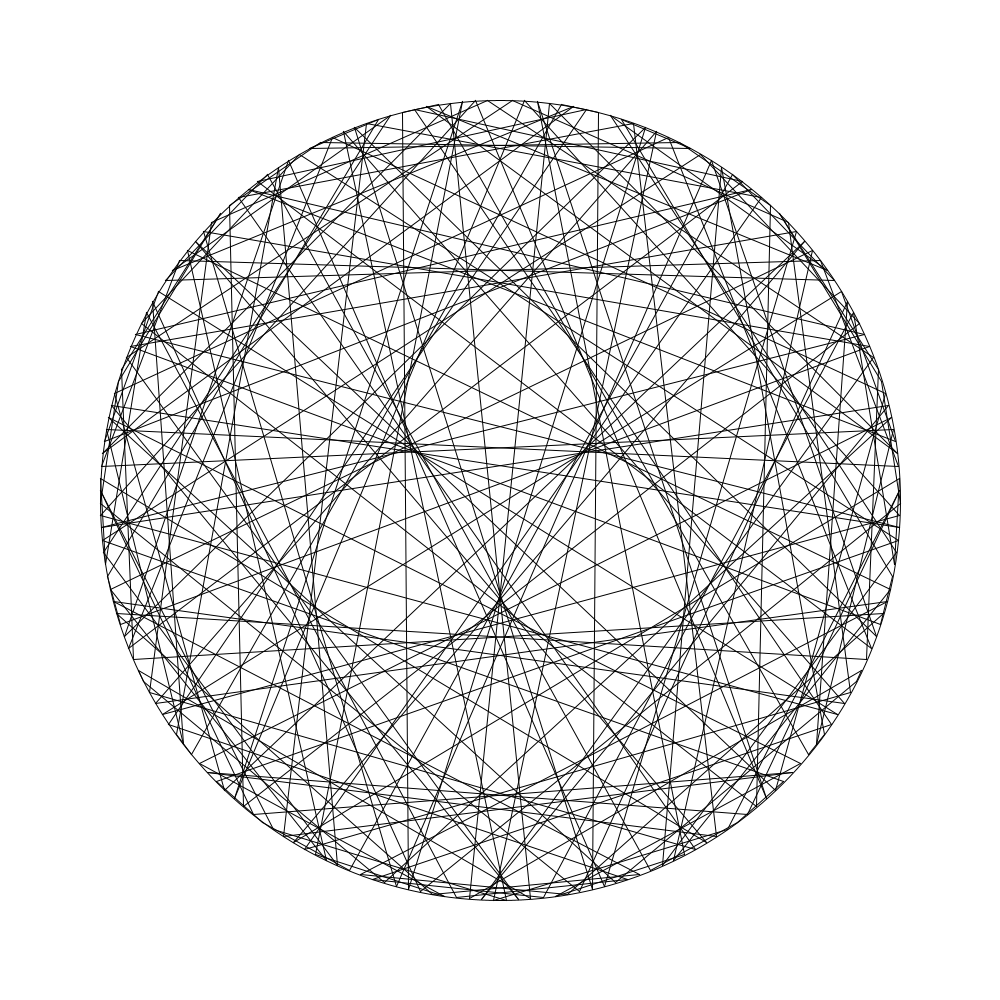}& 
   \includegraphics[align=c, width=\linewidth, height=\linewidth, keepaspectratio]{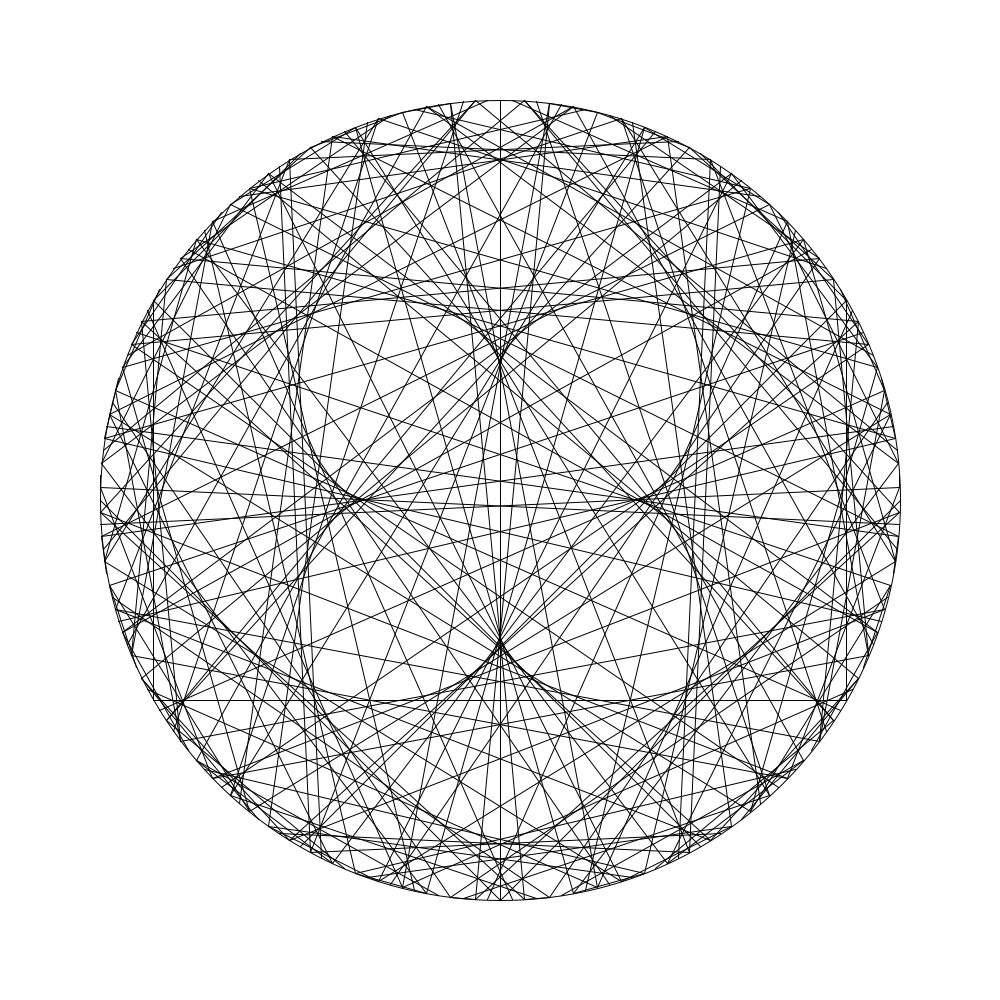}& 
   \includegraphics[align=c, width=\linewidth, height=\linewidth, keepaspectratio]{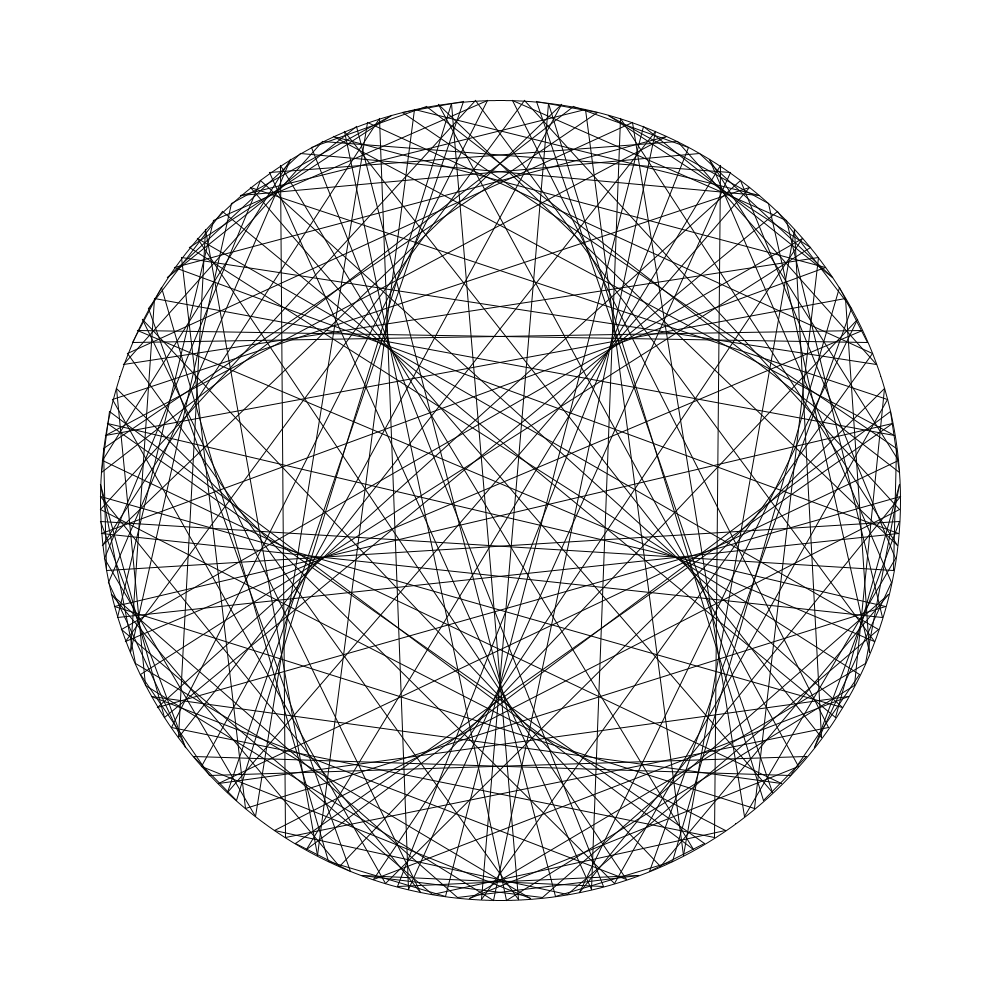}& 
   \includegraphics[align=c, width=\linewidth, height=\linewidth, keepaspectratio]{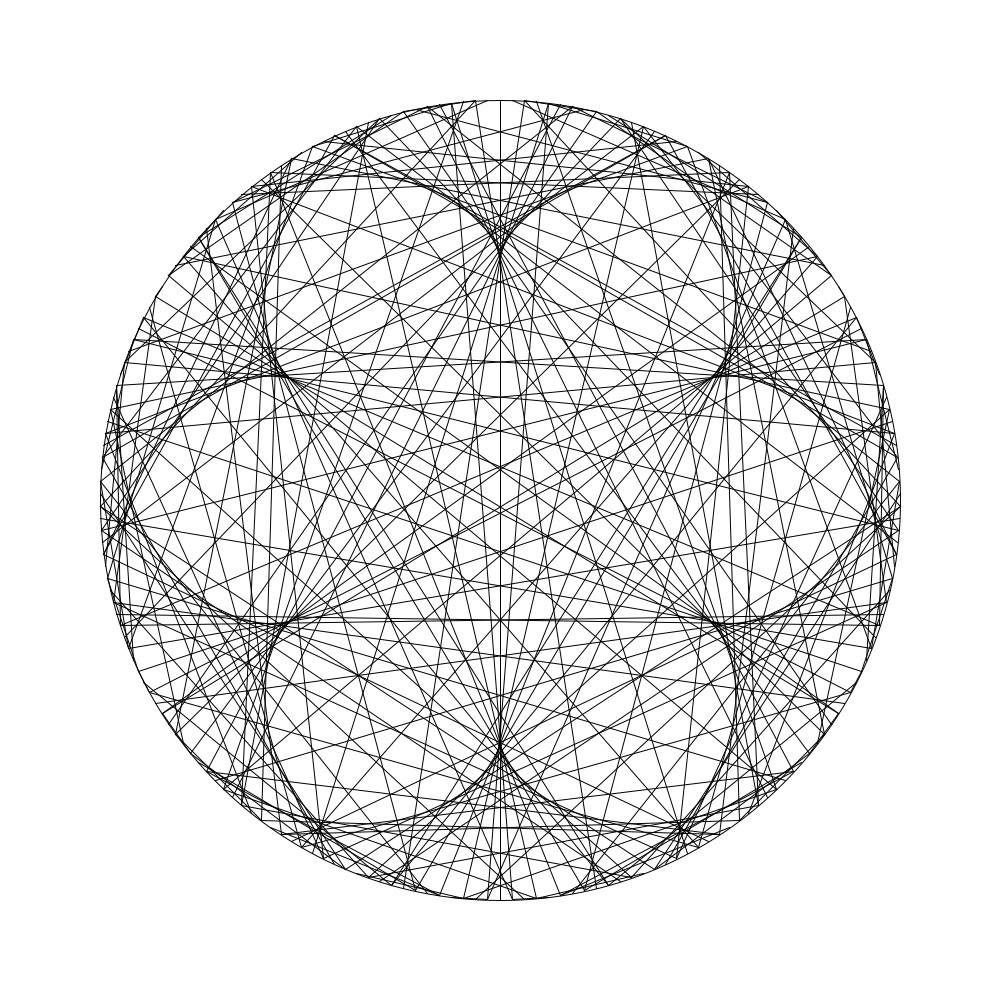}&
   \includegraphics[align=c, width=\linewidth, height=\linewidth, keepaspectratio]{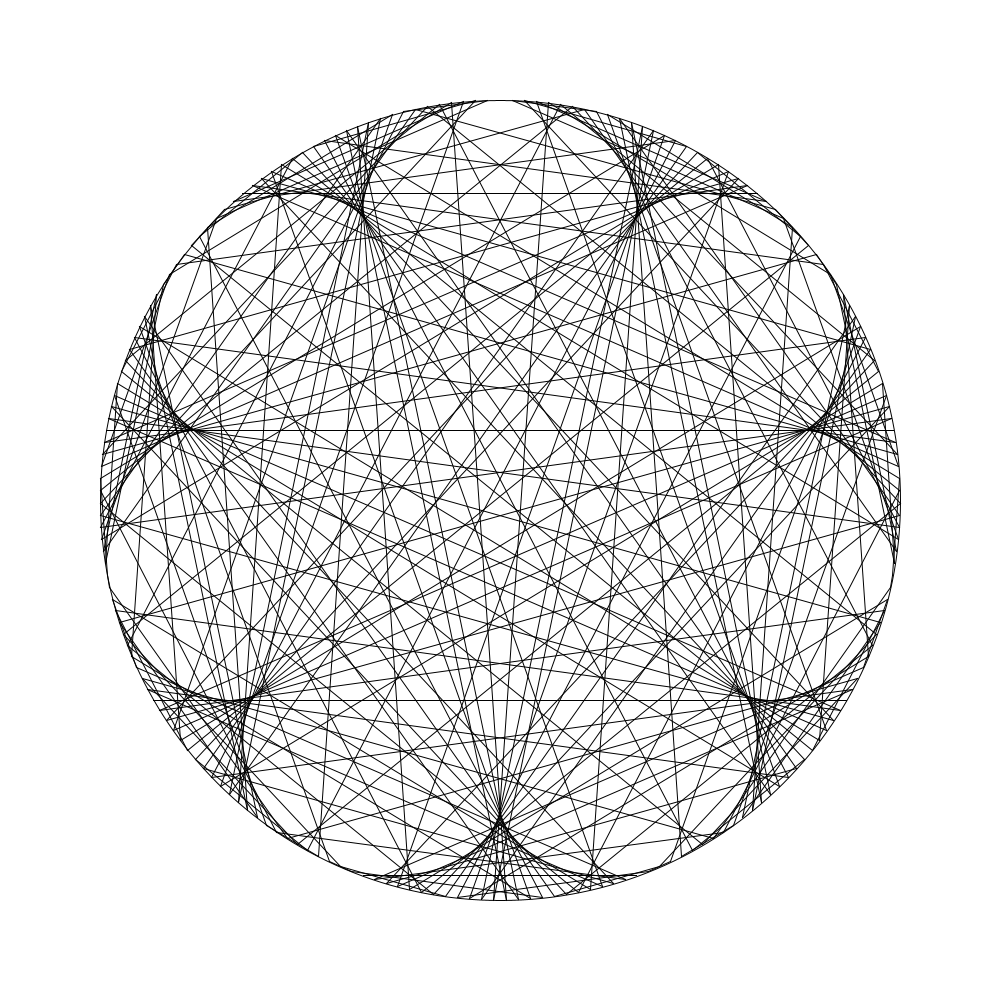}& 8 & \\
    9 &
   \includegraphics[align=c, width=\linewidth, height=\linewidth, keepaspectratio]{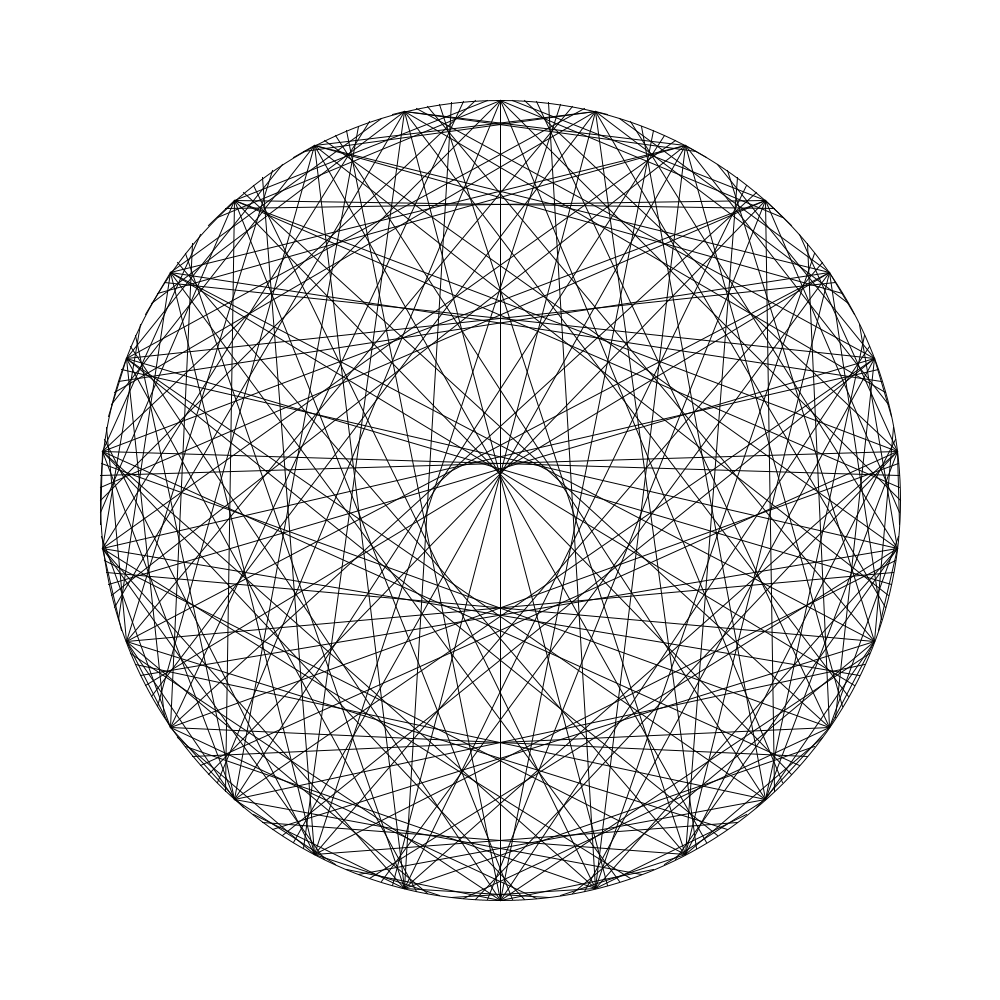} & \includegraphics[align=c, width=\linewidth, height=\linewidth, keepaspectratio]{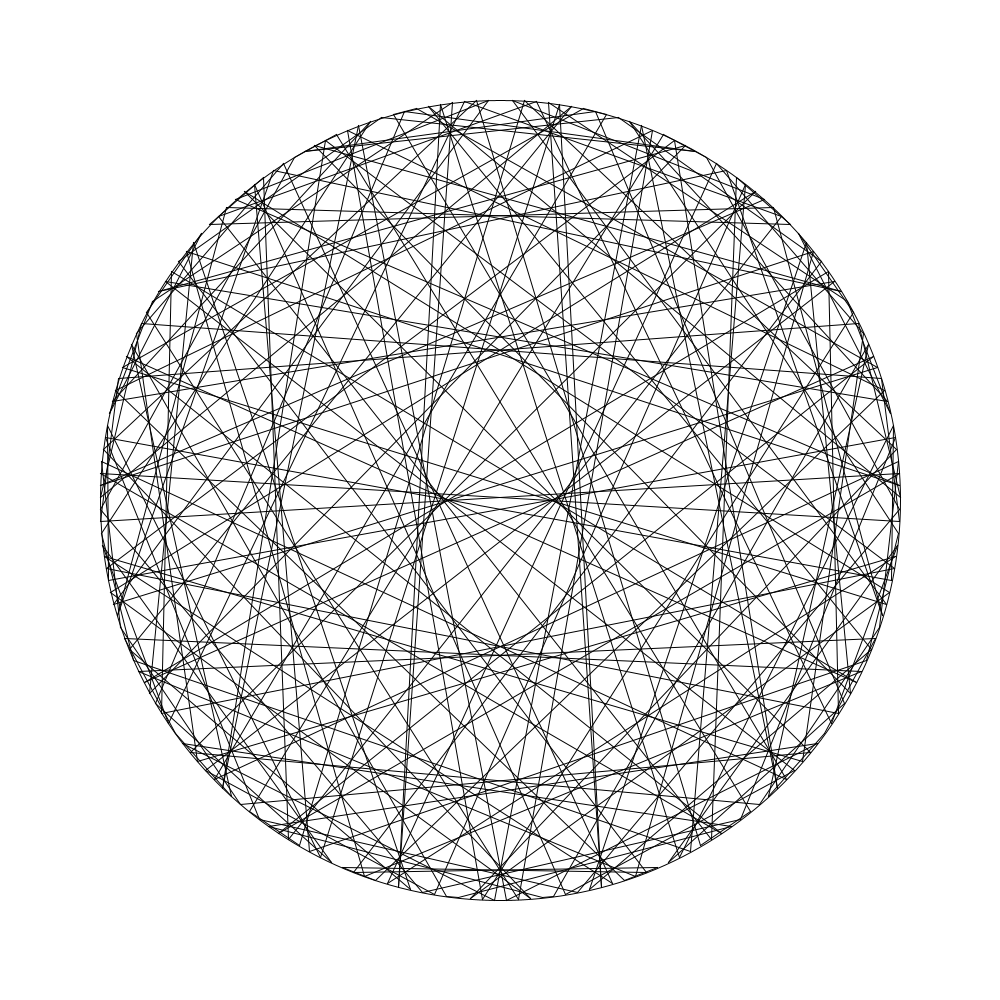}& \includegraphics[align=c, width=\linewidth, height=\linewidth, keepaspectratio]{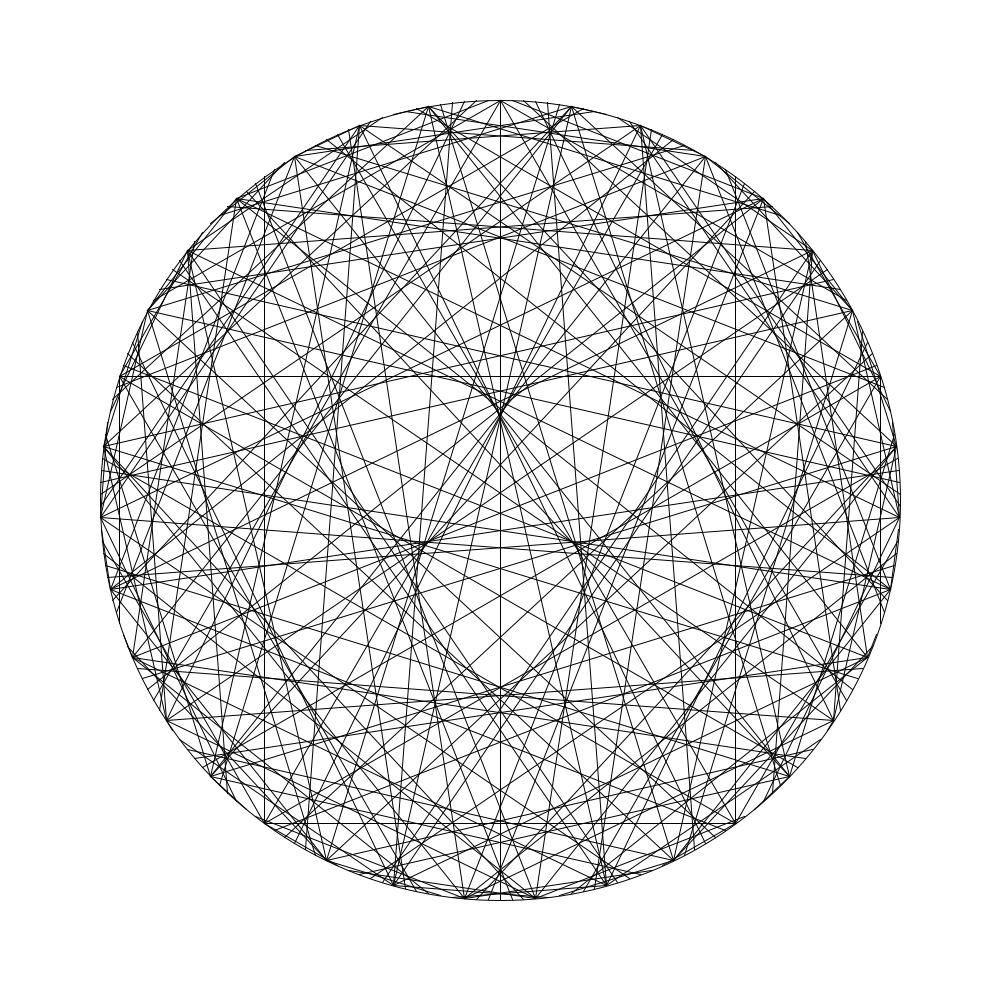}& 
   \includegraphics[align=c, width=\linewidth, height=\linewidth, keepaspectratio]{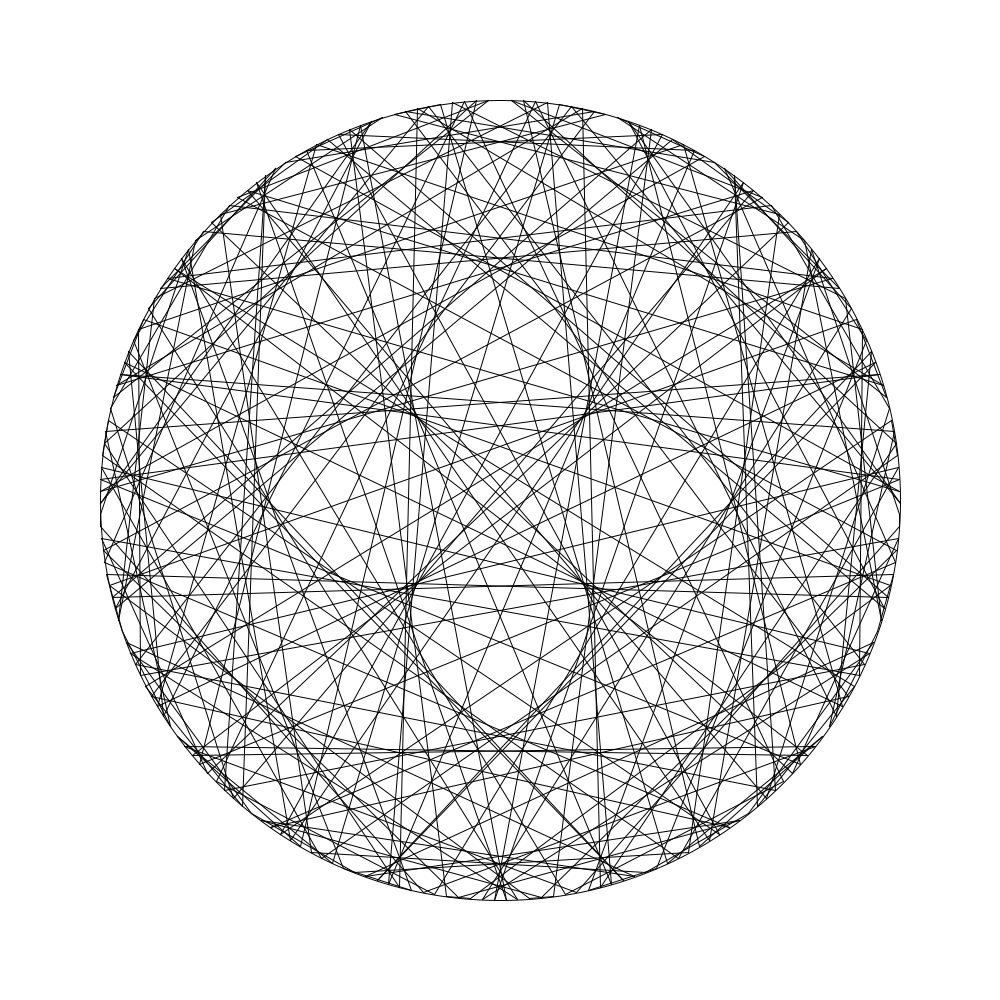}& 
   \includegraphics[align=c, width=\linewidth, height=\linewidth, keepaspectratio]{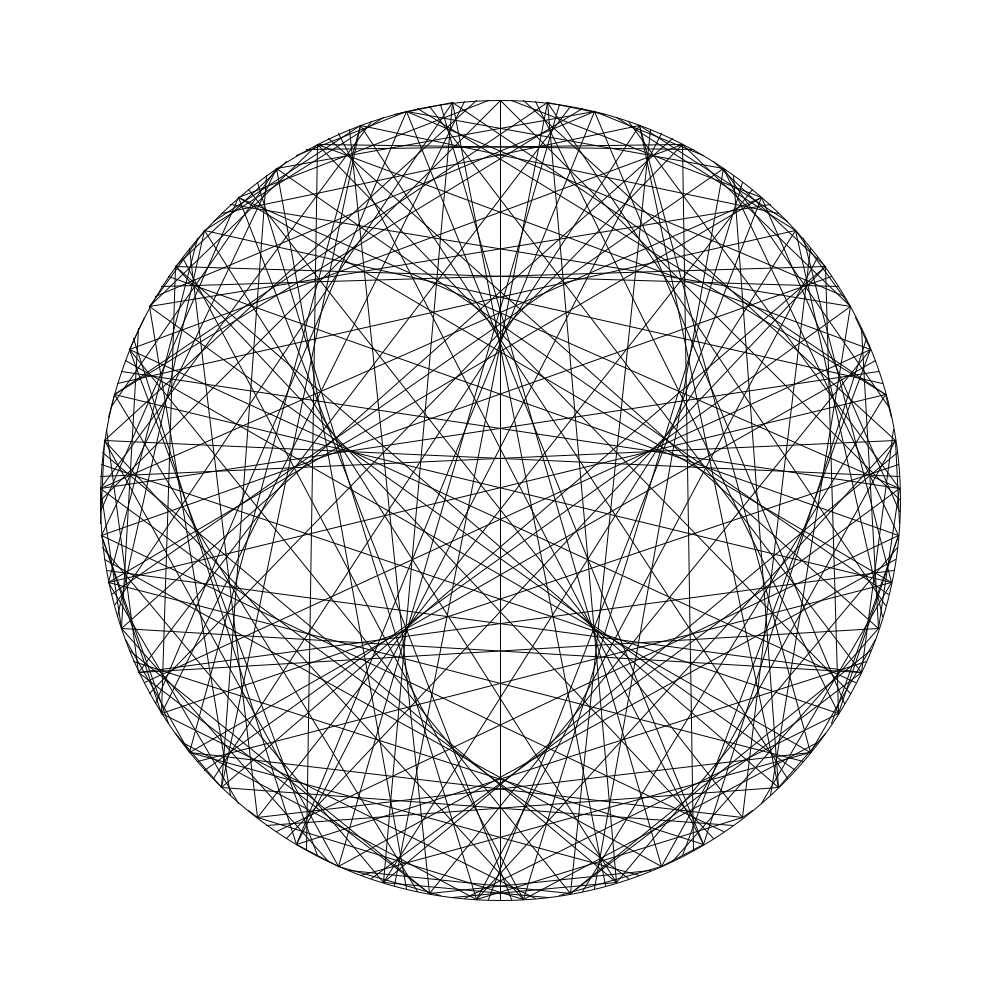}& 
   \includegraphics[align=c, width=\linewidth, height=\linewidth, keepaspectratio]{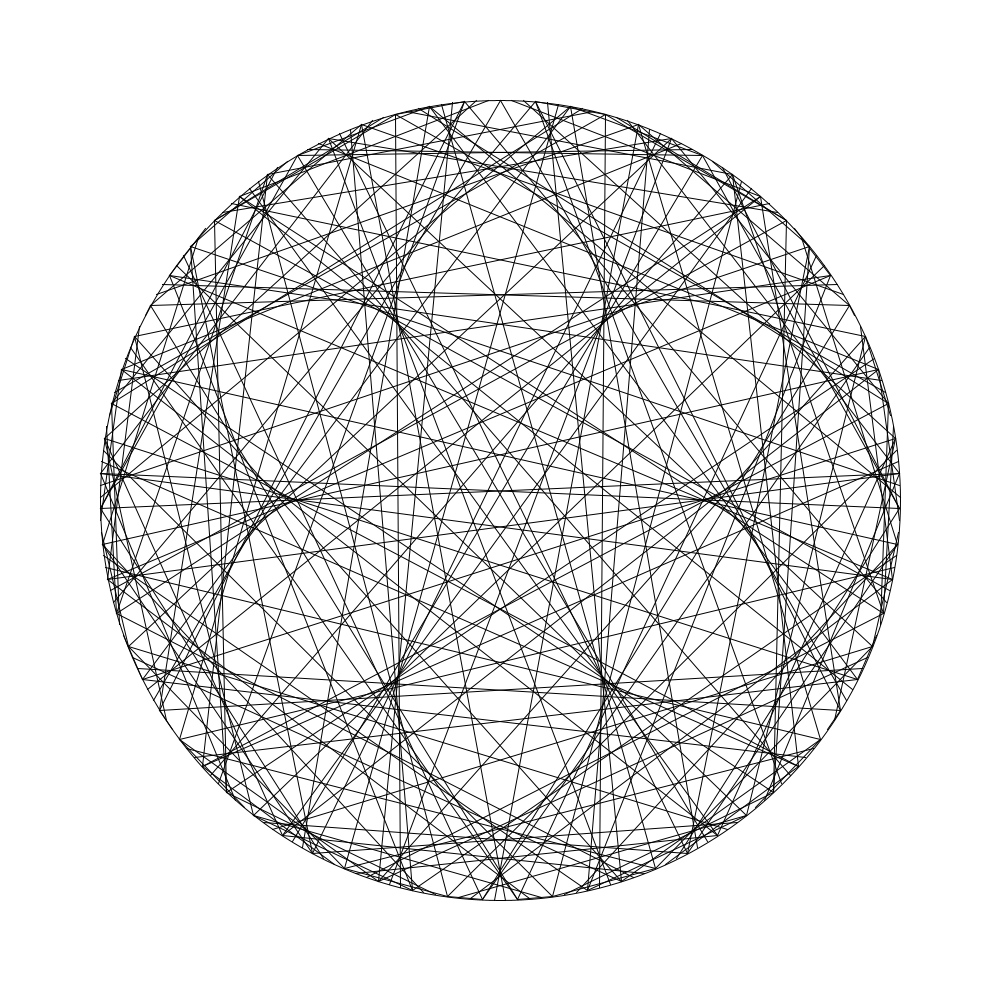}&
   \includegraphics[align=c, width=\linewidth, height=\linewidth, keepaspectratio]{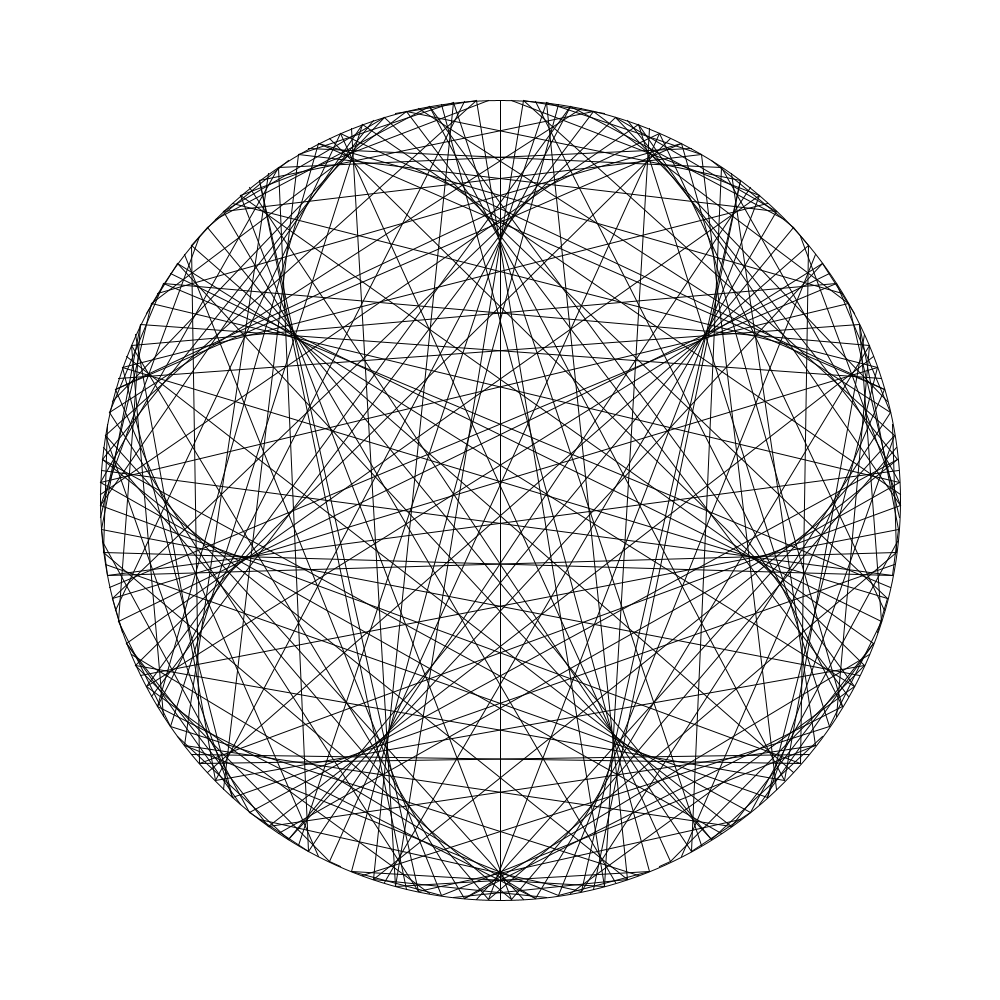}& 
    \includegraphics[align=c, width=\linewidth, height=\linewidth, keepaspectratio]{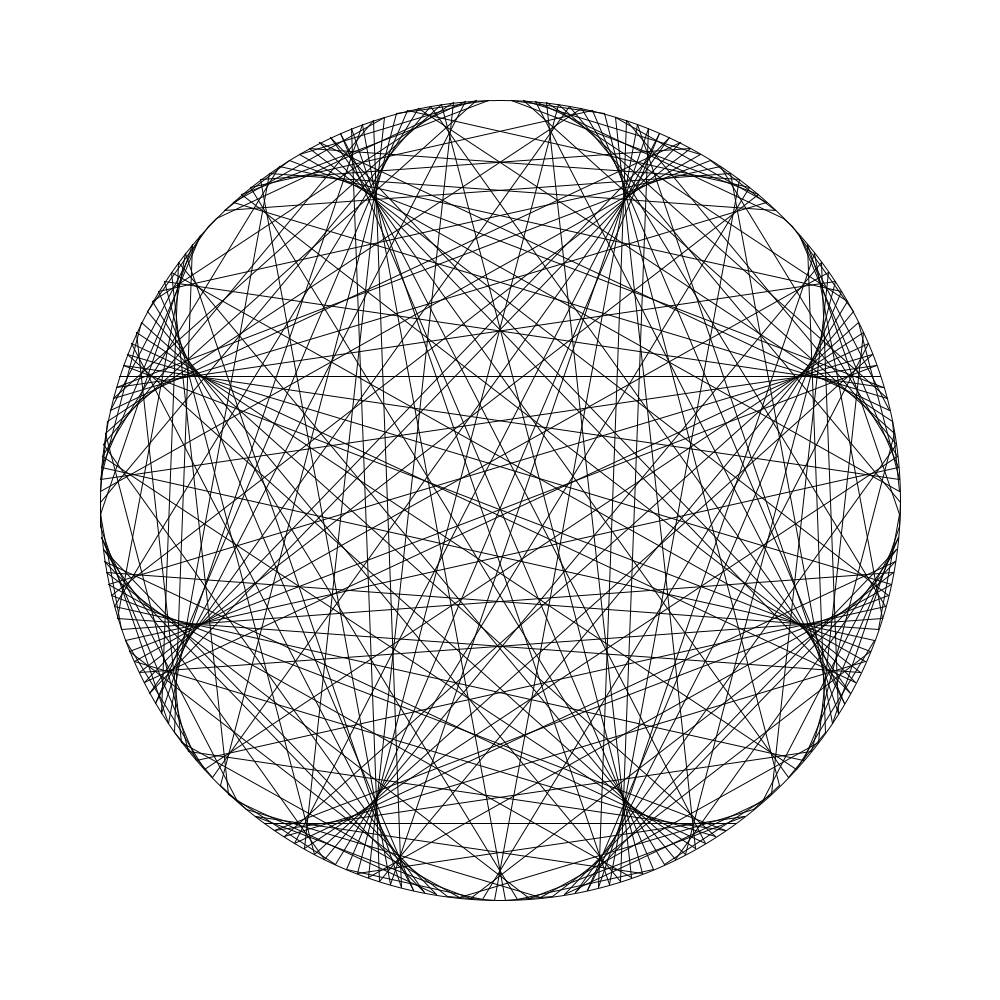}& \\
\end{tabularx}

\caption{A ``grid of graphs''! Each graph is of the form $\MMT(m, \lceil \frac{m}{b} \rceil)$ for some positive integers with $b < m$. The rows are indexed by the value for $b$ and the columns are indexed by $r$ where $m \equiv r \mod b$. We have taken $m$ to be approximately 200 to see the design clearly.}\label{fig:table_of_tables}
\end{figure}

\begin{figure}
     \centering
     \begin{subfigure}[c]{0.23\textwidth}
         \centering
         \includegraphics[width=\textwidth]{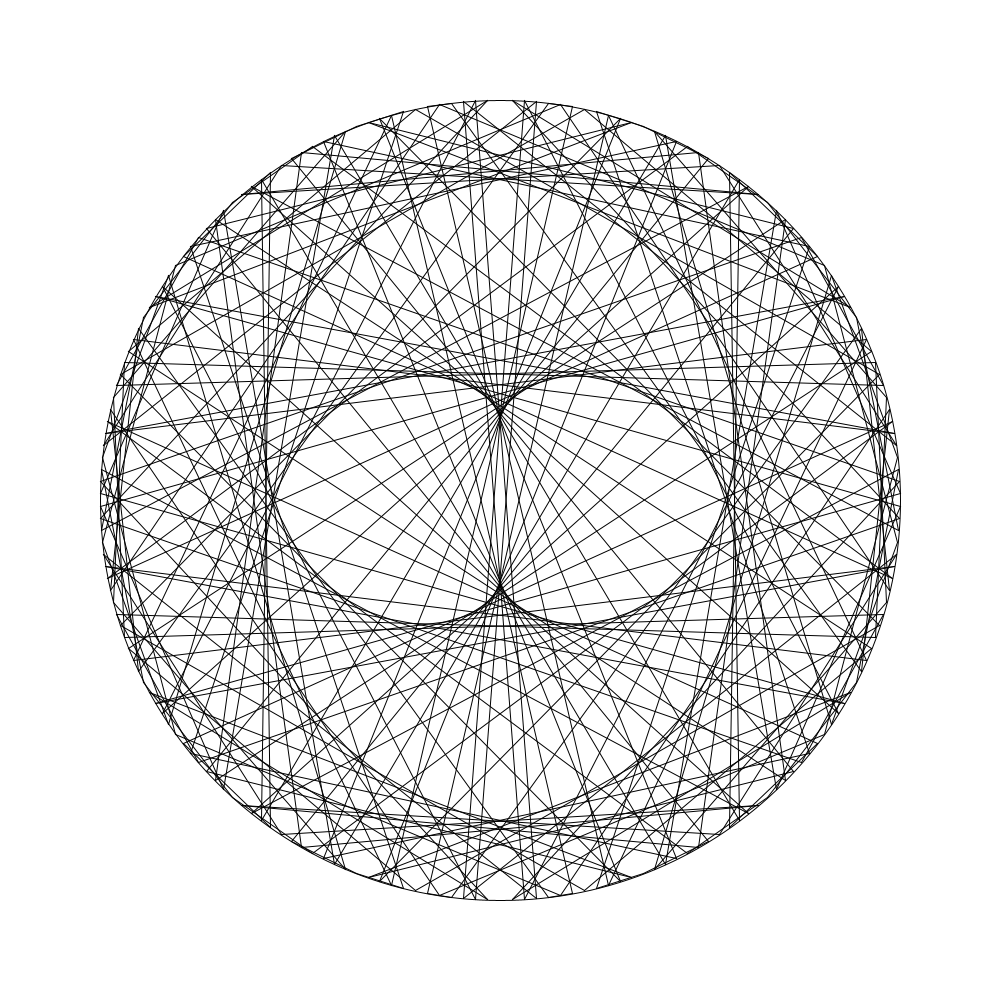}
         \caption{$\MMT(206,35)$}
         \label{fig:MMT-206-35}
     \end{subfigure}
     \hfill
     \begin{subfigure}[c]{0.23\textwidth}
         \centering
         \includegraphics[width=\textwidth]{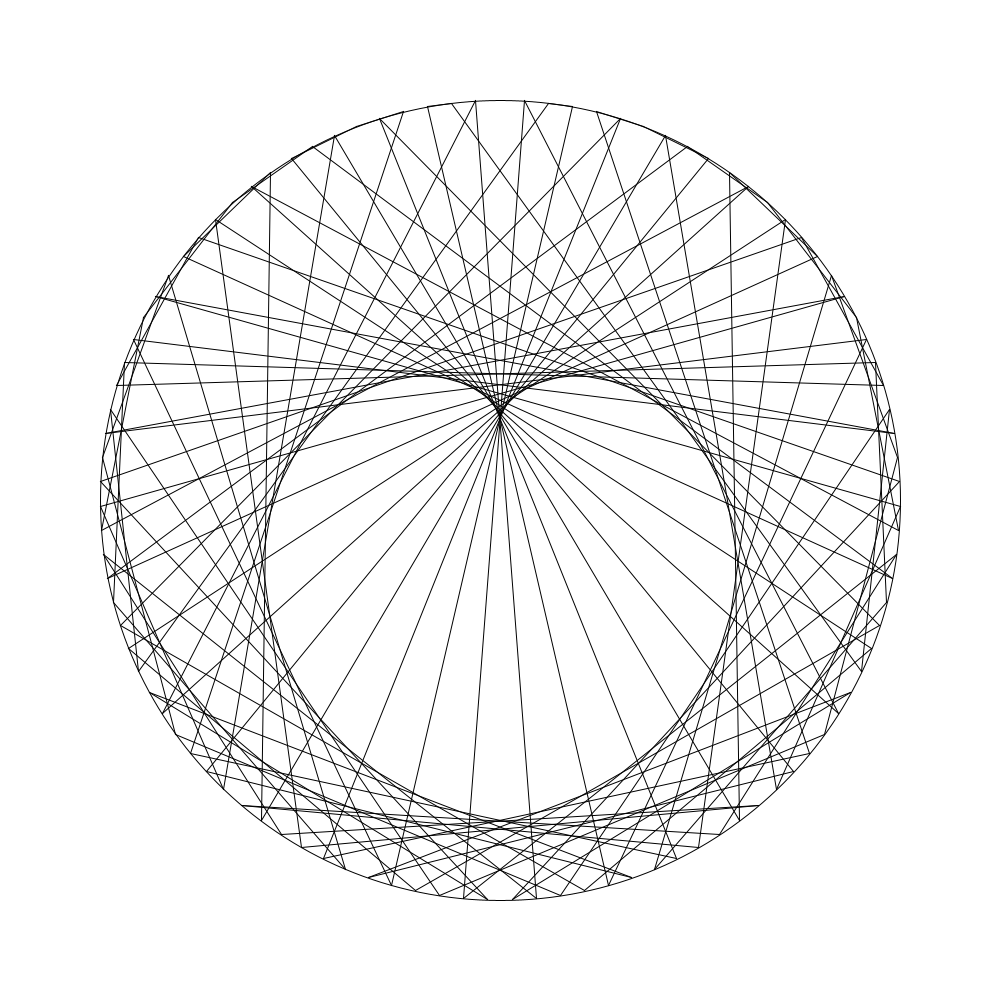}
         \caption{All chords with initial point even.}
         \label{fig:MMT-206-35-0mod2}
     \end{subfigure}
     \hfill
     \begin{subfigure}[c]{0.23\textwidth}
         \centering
         \includegraphics[width=\textwidth]{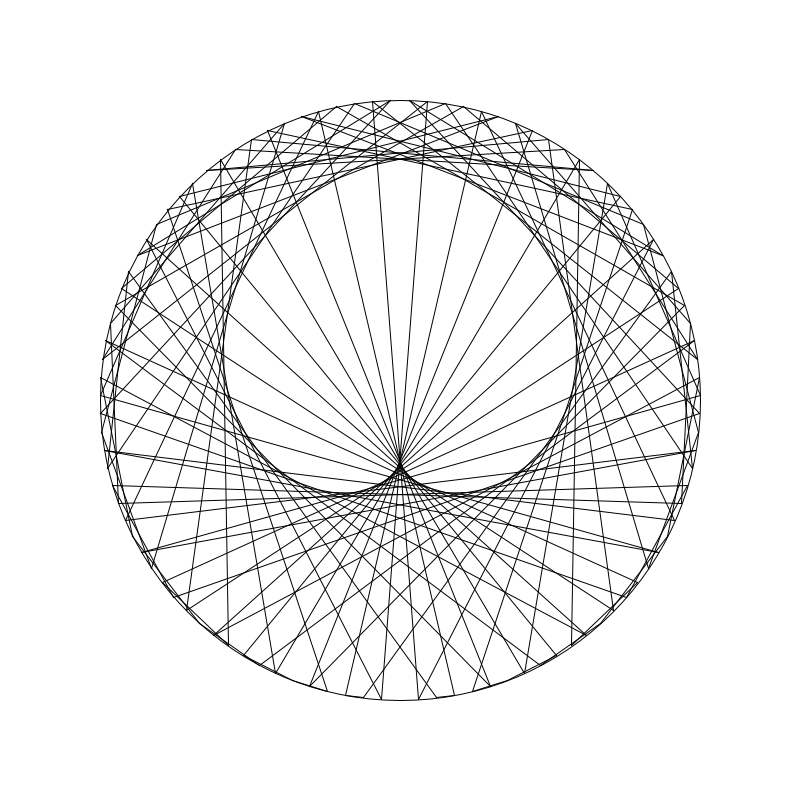}
         \caption{All chords with initial point odd.}
         \label{fig:MMT-206-35-1mod2}
     \end{subfigure}
    \hfill
    \begin{subfigure}[c]{0.26\textwidth}
         \centering
        \includegraphics[width=\textwidth]{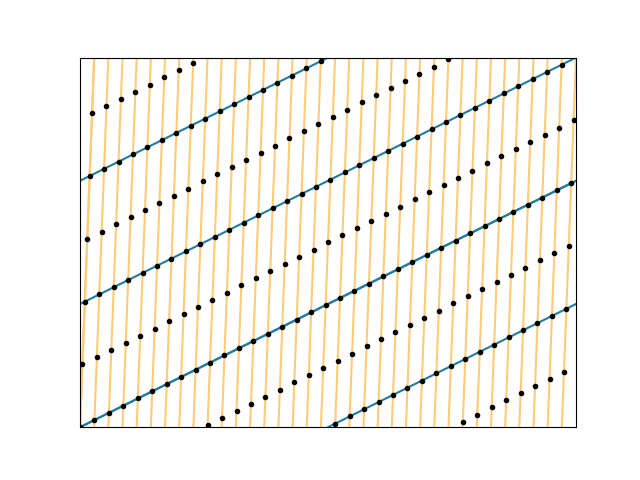}
         \caption{The corresponding planet dance and its discrete sampling.}
         \label{fig:knots-for-206-35}
     \end{subfigure}
    \caption{$\MMT(206, 35)$ ``reduces'' to two copies of $\MMT(103, 35)$, one rotated halfway around the circle. Figure~\ref{fig:knots-for-206-35} depicts the corresponding linear torus loops. The yellow lines represent $\moon{1}{35}$, with the black dots showing a sampling at rate $206$. The blue lines represent $\moon{3}{2}$.}
    \label{fig:reducing tables 6-3 ex}
\end{figure}

\begin{figure}
     \centering
     \begin{subfigure}[t]{0.23\textwidth}
         \centering
         \includegraphics[width=\textwidth]{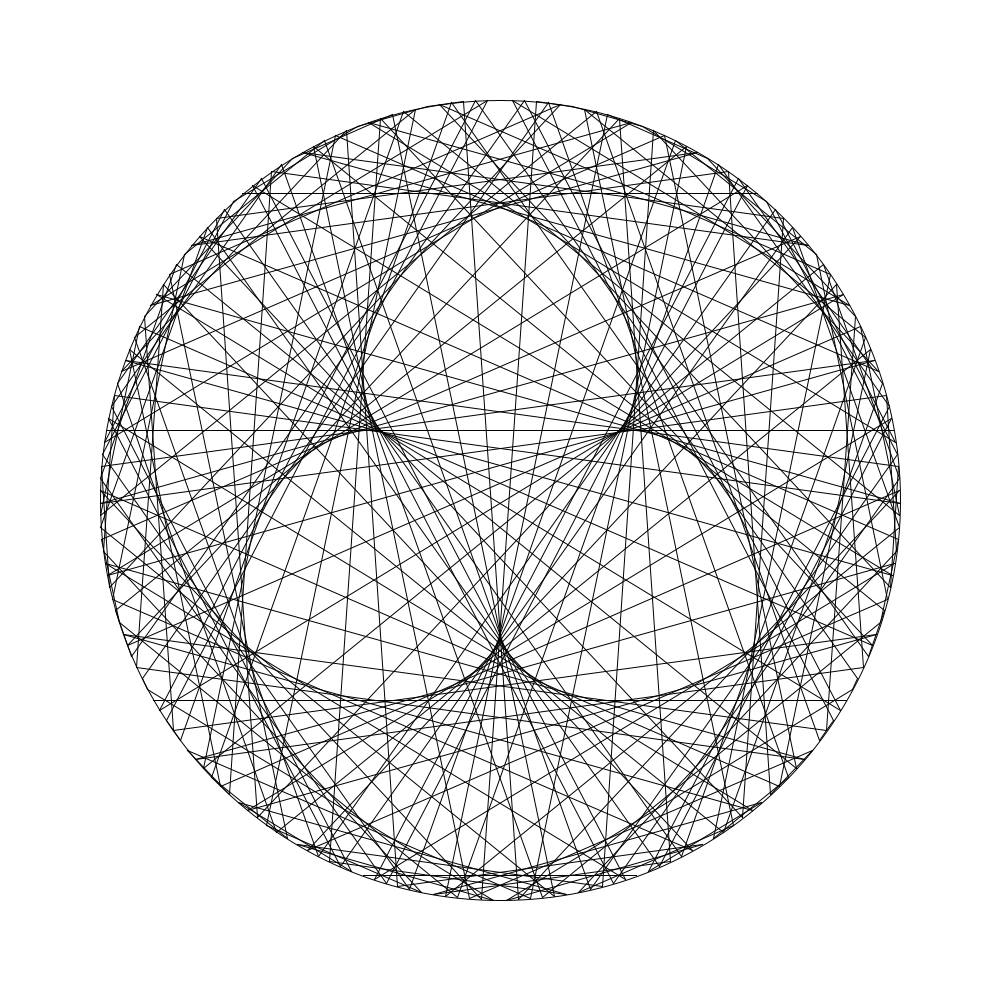}
         \caption{$\MMT(207,35)$}
         \label{fig:MMT-207-35}
     \end{subfigure}
     \begin{subfigure}[t]{0.3\textwidth}
         \centering
         \includegraphics[width=\textwidth]{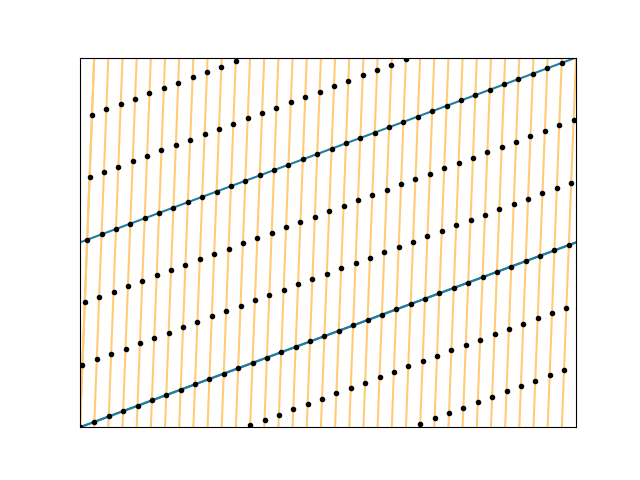}
         \caption{The corresponding planet dance and its discrete sampling.}
         \label{fig:knots-207-35}
     \end{subfigure}
     \hfill \\
     \begin{subfigure}[t]{0.23\textwidth}
         \centering
         \includegraphics[width=\textwidth]{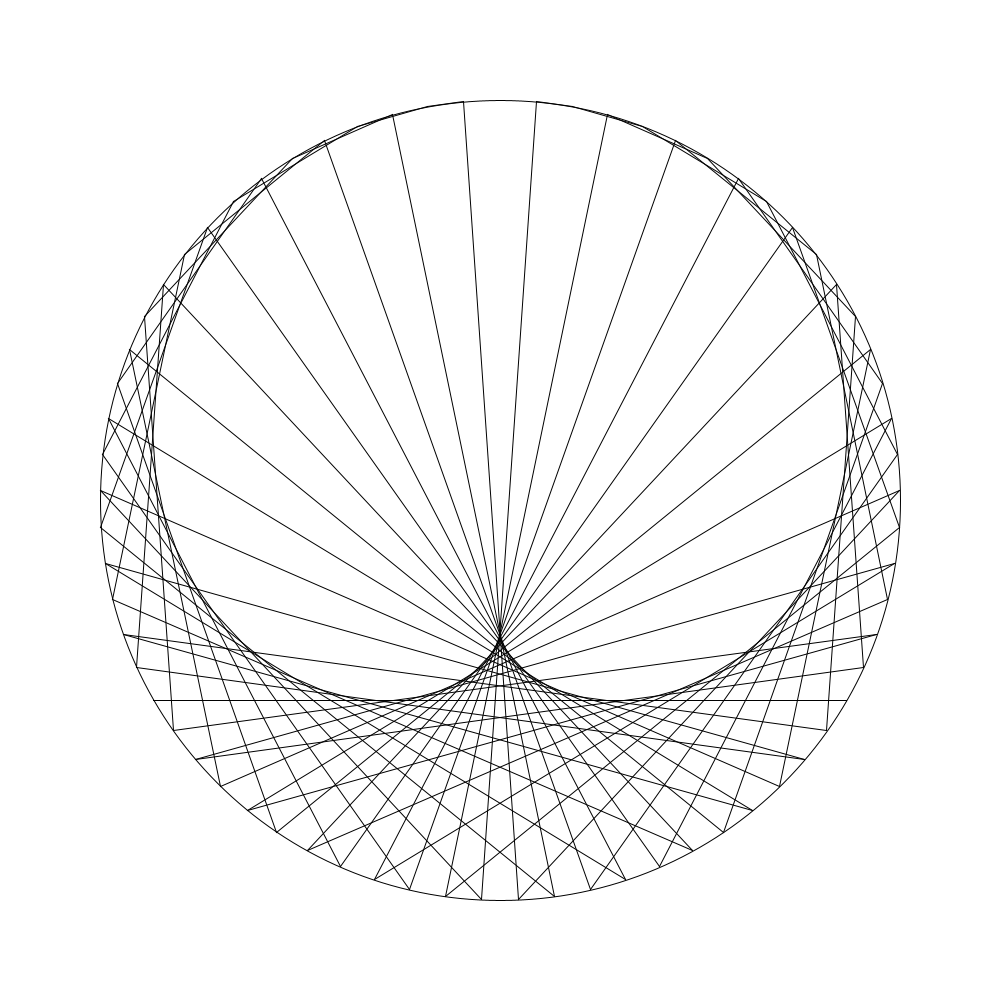}
         \caption{All chords with initial point $p \equiv 0 \mod 3$}
         \label{fig:MMT-207-35-0mod3}
     \end{subfigure}
     \begin{subfigure}[t]{0.23\textwidth}
         \centering
         \includegraphics[width=\textwidth]{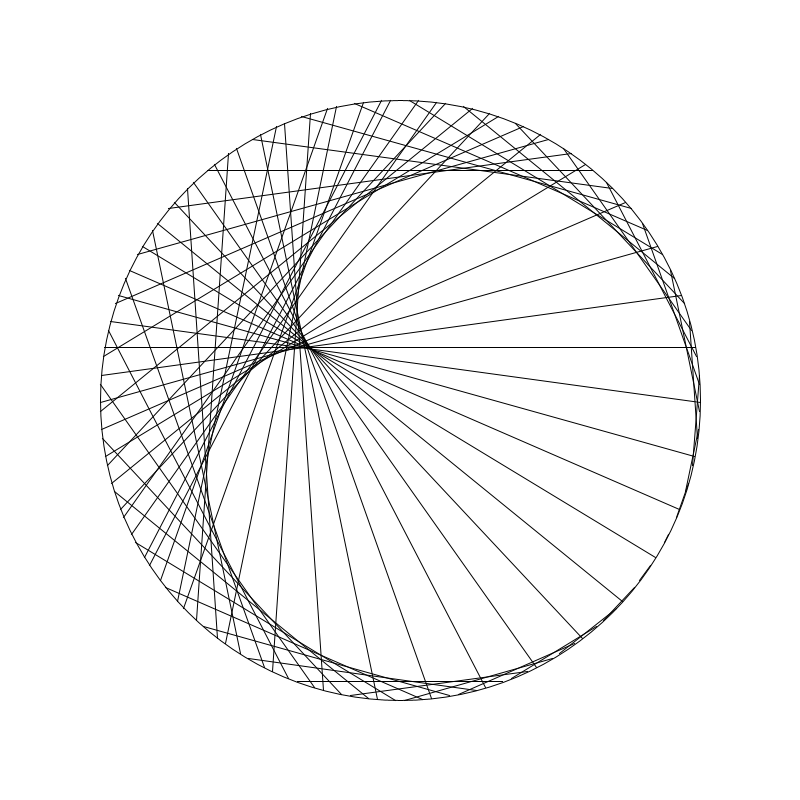}
         \caption{All chords with initial point $p \equiv 1 \mod 3$}
         \label{fig:MMT-207-35-1mod3}
     \end{subfigure}
      \begin{subfigure}[t]{0.23\textwidth}
         \centering
         \includegraphics[width=\textwidth]{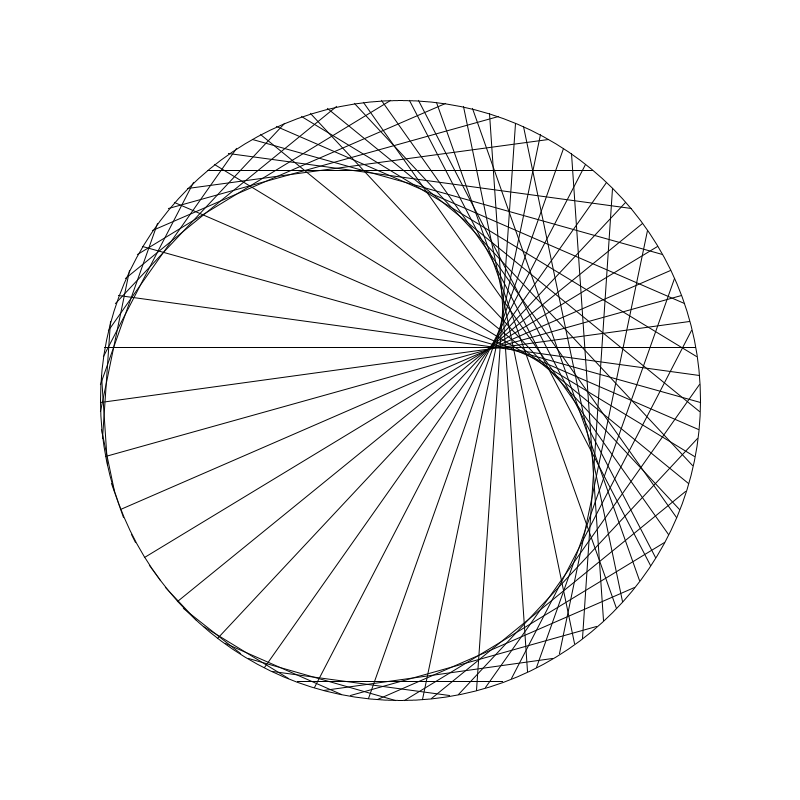}
         \caption{All chords with initial point $p \equiv 2 \mod 3$.}
         \label{fig:MMT-207-35-2mod3}
     \end{subfigure}
    \caption{$\MMT(207, 35)$ ``reduces'' to three copies of $\MMT(69, 35)$. Figure~\ref{fig:knots-207-35} depicts the corresponding linear torus loops. The yellow lines represent $\moon{1}{35}$, with the black dots showing a sampling at rate $207$. The blue lines represent $\moon{2}{1}$.}
    \label{fig:reducing tables 6-2 ex}
\end{figure}

I discovered these patterns before considering modular stitch graphs from the perspective of planet dances. But now, with this other perspective, the mysteries are unraveled. Let $m \equiv r \mod b$. Then we can write $a = \lceil \frac{m}{b} \rceil$ as 
\[a = \frac{m + (b - r)}{b}.\]
If we do some algebraic rearranging, then we have the following relationship.
\begin{align*}
    ba &= m + b - r \\
    ba - (b-r) &= m 
\end{align*}
With our knowledge about the intersection number of linear loops on the torus, we recognize that the above equation says that $\moon{b}{b - r}$ and $\moon{1}{a}$ will intersect $m$ times. Except, there is one missing link: this is only true if both planet dances are in reduced form, i.e. if $\gcd(b, b - r) = \gcd(b, r) = 1$. In this case, $\MMT(m,a)$ will be an $m$-sampling of $\moon{b}{b - r}$ by Proposition \ref{prop:aliasing-planet-dances}. We have already seen one example of this case: when $b = 3$ and $r = 1$. If we set $a = 34$, then $m = 3\cdot 34 - 2 = 100$. Following the discussion in Section \ref{sec:planet-dance-aliasing}, $\MMT(100, 34)$ is a $100$-sampling of $\moon{3}{2}$.

\begin{cor}[of Proposition \ref{prop:aliasing-planet-dances}]\label{cor:upper-odd-table-reduced}
    Let $m$ and $b$ be positive integers with $b < m$ such that $m \equiv r \mod{b}$ for $0 < r < b$. If $\gcd(r,b) = 1$, then  $\MMT(m, \lceil \frac{m}{b} \rceil)$ will be an $m$-sampling of $\moon{b}{b - r}$.
\end{cor}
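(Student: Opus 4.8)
The plan is to reduce the corollary directly to Proposition \ref{prop:aliasing-planet-dances} by identifying the correct planet dance and then checking, in order, that (i) the sampling rate matches $|\alpha a - \beta|$ and (ii) the planet dance is in reduced form. The natural choice is $\alpha = b$ and $\beta = b - r$, so the target is to show that $\MMT(m, a)$ with $a = \lceil \frac{m}{b} \rceil$ is an $m$-sampling of $\moon{b}{b-r}$.

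First I would rewrite the ceiling explicitly. Writing $m = qb + r$ with $0 < r < b$, we have $\frac{m}{b} = q + \frac{r}{b}$ with $0 < \frac{r}{b} < 1$, so $a = \lceil \frac{m}{b} \rceil = q + 1$. Clearing denominators gives
\[ a = \frac{qb + b}{b} = \frac{(m - r) + b}{b} = \frac{m + (b - r)}{b}, \]
hence $ba - (b - r) = m$. Since $m > 0$, this yields $|ba - (b - r)| = m$, which is exactly the relation $m = |\alpha a - \beta|$ required to invoke Proposition \ref{prop:aliasing-planet-dances} with $\alpha = b$, $\beta = b - r$, and multiplier $a$.

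Next I would verify the reduced-form hypothesis, which is the only place the assumption $\gcd(r, b) = 1$ is used. Applying the standard identity $\gcd(b, b - r) = \gcd(b, r)$, the hypothesis $\gcd(r, b) = 1$ immediately gives $\gcd(b, b - r) = 1$, so $\moon{b}{b-r}$ is in reduced form. With both hypotheses confirmed, Proposition \ref{prop:aliasing-planet-dances} concludes that $\MMT(m, a)$ is an $m$-sampling of $\moon{b}{b-r}$, which is the claim.

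The argument is essentially bookkeeping, so there is no deep obstacle; the content lies in matching hypotheses precisely. Two points demand care. The formula $\lceil \frac{m}{b} \rceil = \frac{m + (b - r)}{b}$ depends crucially on $0 < r < b$, so the excluded case $r = 0$ (that is, $b \mid m$) must be acknowledged as handled separately. More importantly, the reduced-form requirement is exactly what breaks when $\gcd(r, b) = d > 1$, the situation flagged for later analysis, so isolating this as the sole role of the coprimality assumption is the key structural observation. In effect, the essential content is recognizing that the ceiling construction is engineered precisely so that $\moon{b}{b-r}$ and $\moon{1}{a}$ intersect $m$ times on the torus, matching the intersection count of Lemma \ref{lem:torus-knot-intersection}.
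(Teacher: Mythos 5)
Your proof is correct and takes essentially the same route as the paper: both rewrite $\lceil \frac{m}{b} \rceil$ as $\frac{m + (b-r)}{b}$ to obtain the relation $ba - (b-r) = m$, verify the reduced-form hypothesis via $\gcd(b, b-r) = \gcd(b, r) = 1$, and then invoke Proposition \ref{prop:aliasing-planet-dances} with $\alpha = b$ and $\beta = b - r$. Even your closing observation---that the ceiling is engineered so $\moon{b}{b-r}$ and $\moon{1}{a}$ intersect $m$ times, and that coprimality is exactly what fails in the overlay case---mirrors the paper's own framing via Lemma \ref{lem:torus-knot-intersection} and its transition to Theorem \ref{thm:overlays}.
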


However, it may be the case that $\gcd(b, b-r) = d > 1$. Upon examination of the ``grid of graphs'', one might notice that many of the images look like overlays of other graphs. For example, when $r = 2$ and $b = 6$, the design looks like two copies of a graph with $r = 1$ and $b = 3$, but one copy is rotated halfway around the circle. When I first found this family of graphs, I noticed this visually and I decided to test my idea experimentally. Figure~\ref{fig:reducing tables 6-3 ex} shows a member of the $r = 2$, $b = 6$ family ($\MMT(206, 35)$) along with two subsets of this modular stitch graph. The first shows only the chords originating from even points (Figure \ref{fig:MMT-206-35-0mod2}) and the second shows only the chords originating from odd points (Figure \ref{fig:MMT-206-35-1mod2}). Another example of this overlay phenomenon is shown in Figure \ref{fig:reducing tables 6-2 ex}.

Every example that I tested gave the same result. When $\gcd(b,b-r) = d$, the modular stitch graph  $\MMT(m, \lceil \frac{m}{b} \rceil)$ looked like $d$ copies of $\MMT(\frac{m}{d}, \lceil \frac{m}{b} \rceil)$, rotated symmetrically around the circle. I managed to devise a number theoretic proof of this fact but it was not very illuminating. With our topological perspective, we can see what is going on more clearly. When we draw the corresponding linear torus loops for the examples in Figures \ref{fig:reducing tables 6-3 ex} and \ref{fig:reducing tables 6-2 ex}, we see that the sampling points trace out copies of $\moon{b}{b-r}$ which are shifted up. Although we do not get these ``phantom copies'' of linear loops from the planet dance construction, they do appear by sampling.

The following theorem addresses a general case of overlaying tables, which we will then apply to our specific example.

\begin{thm}[Overlaying planet dances]\label{thm:overlays}
    Let $\moon{\alpha}{\beta}$ be a planet dance in reduced form with $\alpha \neq 0$ and let $a \in \Z$ be any integer. For $m = |\alpha a - \beta|$ we have $\sample{m}{\alpha, \beta} = \sample{m}{1, a}$ by Proposition \ref{prop:aliasing-planet-dances}. Then, for any positive integer $d$, $\sample{dm}{1, a}$ consists of $d$ disjoint sets of chords $\Sc_0, \dots, \Sc_{d-1}$. For each $k$, $|\Sc_k| = m$ and $\Sc_k$ is contained in a copy of $\moon{\alpha}{\beta}$ rotated by $\frac{k}{d(\alpha - \beta)}$ around the circle.
\end{thm}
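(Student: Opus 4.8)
The plan is to carry out the whole argument on the flat torus $\T^2 = \R^2/\Z^2$, where (as in Section~\ref{sec:moon-systems-as-paths}) the chord of $\moon{1}{a}$ at parameter $t$ is the point $(t, at)\bmod 1$, and rotating every chord by an angle $\theta$ around the circle is exactly the diagonal translation $(x,y)\mapsto(x+\theta,y+\theta)$. The central device is the transverse coordinate of the foliation of $\T^2$ by lines of direction $(\alpha,\beta)$: for a point $(x,y)$ set $w(x,y) = \alpha y - \beta x$. Since $\gcd(\alpha,\beta)=1$, increasing $x$ or $y$ by an integer changes $w$ by $-\beta$ or $\alpha$, and these generate $\Z$, so $w$ is a well-defined element of $\R/\Z$. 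Two points share a common line of direction $(\alpha,\beta)$ precisely when they share the same value of $w$, and the level set $\{w\equiv c\}$ is exactly the copy of $\moon{\alpha}{\beta}$ obtained by the diagonal rotation through $\theta = c/(\alpha-\beta)$ (this uses $\alpha\neq\beta$, which holds for any nondegenerate reduced dance). Because $\theta\mapsto(\alpha-\beta)\theta$ covers every level set, ``line of direction $(\alpha,\beta)$'' and ``diagonal rotation of $\moon{\alpha}{\beta}$'' name the same family.

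With this set up, the computation is immediate. Write the $dm$ chords of $\sample{dm}{1,a}$ as the distinct points $P_k = \bigl(\tfrac{k}{dm},\tfrac{ak}{dm}\bigr)$ for $0\le k < dm$ (distinct because their first coordinates differ). Evaluating the invariant,
\[ w(P_k) = \alpha\cdot\frac{ak}{dm} - \beta\cdot\frac{k}{dm} = \frac{k(\alpha a - \beta)}{dm} \equiv \pm\frac{k}{d} \pmod 1, \]
where the last step uses $|\alpha a - \beta| = m$ and the sign is that of $\alpha a - \beta$. Hence $w(P_k)$ depends only on $k \bmod d$, and $w(P_k)\equiv w(P_{k'})$ if and only if $d \mid (k-k')$.

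I would then define $\Sc_r = \{P_k : 0\le k < dm,\ k\equiv r \pmod d\}$ for $r=0,\dots,d-1$. Each residue class contains exactly $m$ of the indices, so $|\Sc_r| = m$; the $\Sc_r$ are pairwise disjoint and exhaust $\sample{dm}{1,a}$. Every point of $\Sc_r$ has $w\equiv \pm r/d$, so $\Sc_r$ is contained in the single line $\{w\equiv \pm r/d\}$, which is $\moon{\alpha}{\beta}$ rotated by $\theta_r = \dfrac{\pm r}{d(\alpha-\beta)}$. Matching the index to the residue under the convention $\alpha a - \beta = m$ gives the stated rotation $\tfrac{k}{d(\alpha-\beta)}$, and the case $\alpha a - \beta = -m$ follows by the identical argument with signs reversed.

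The modular-arithmetic bookkeeping is routine; the one place that needs genuine care is the foundational claim about $w$ — that it is a well-defined $\R/\Z$-valued invariant whose level sets are exactly the diagonal rotations of $\moon{\alpha}{\beta}$. This is precisely where $\gcd(\alpha,\beta)=1$ and $\alpha\neq\beta$ enter, and it is what turns the geometric picture of ``phantom copies'' in Figures~\ref{fig:reducing tables 6-3 ex} and~\ref{fig:reducing tables 6-2 ex} into the clean statement that the finer sampling splits according to $k\bmod d$. A harmless extra check, not needed for the ``contained in'' conclusion but worth recording, is that each $\Sc_r$ is in fact a full $m$-sampling of its rotated copy, so the $d$ pieces really are congruent rotated stitch graphs rather than arbitrary subsets.
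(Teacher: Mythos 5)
Your proof is correct, but it takes a genuinely different route from the paper's. Both arguments use the same decomposition: your residue classes $\Sc_r = \{P_k : k \equiv r \bmod d\}$ are exactly the paper's cosets of the subgroup $\sample{m}{1,a}$ inside the group $\sample{dm}{1,a}$. Where you diverge is in showing that each class lies on a rotated copy of $\moon{\alpha}{\beta}$. The paper argues geometrically in three steps: each coset is a translate of $\Sc_0$, so it lies on a line of the same slope $\frac{\beta}{\alpha}$; the $y$-intercept $\frac{k}{d\alpha}$ is then computed via the Triangle Proportionality Theorem; and finally the intercept is converted into a rotation amount by intersecting the line with the diagonal $\moon{1}{1}$, solving $\alpha t = \beta t + \frac{k}{d\alpha}$ to get $x = y = \frac{k}{d(\alpha-\beta)}$. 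Your transverse invariant $w(x,y) = \alpha y - \beta x \in \R/\Z$ collapses all three steps into the one-line evaluation $w(P_k) \equiv \pm\frac{k}{d} \pmod 1$, and it buys two things the paper's version lacks: explicit bookkeeping of the sign of $\alpha a - \beta$ (the paper's triangle picture tacitly assumes the positive case), and a clean criterion --- level sets of $w$ --- for when two sample points lie on a common rotated copy, with $\gcd(\alpha,\beta)=1$ entering exactly where it must (a level set is a \emph{single} closed line precisely because any integer value of $\alpha q - \beta p$ is attained). Two small calibrations: well-definedness of $w$ mod $1$ needs only $\alpha,\beta \in \Z$, not the gcd condition, which is instead what makes each level set connected; and your parenthetical that $\alpha \neq \beta$ ``holds for any nondegenerate reduced dance'' is not quite right, since $\moon{1}{1}$ is in reduced form --- but the theorem's rotation formula $\frac{k}{d(\alpha-\beta)}$ already presupposes $\alpha \neq \beta$, and the paper's own proof divides by $\alpha - \beta$ just the same, so this is a shared implicit hypothesis rather than a gap in your argument. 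Your closing observation that each $\Sc_r$ is a full $m$-sampling of its rotated copy is a genuine (if mild) strengthening that the paper does not record.
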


Figure \ref{fig:overlaying} encapsulates the statement of Theorem \ref{thm:overlays} and its proof. The red points are the initial sample points, those in $\sample{m}{1,a}$. When we increase the sampling by a factor of 3 and plot $\sample{3m}{1, a}$, we get the additional points in the picture. All the red points lie on the red line, representing $\moon{2}{1}$. The additional points in the picture can be divided into two sets, those that lie on the blue line and those on the yellow line. These lines are copies of the red line shifted on the vertical axis of the square, and this corresponds to a rotation of the planet dance around the circle. 

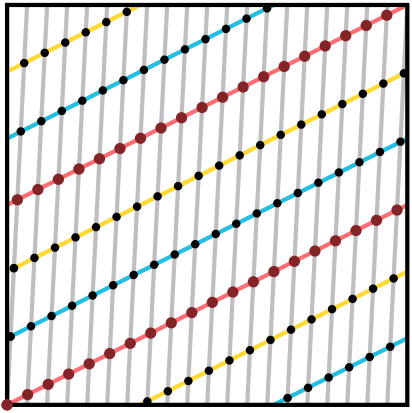
\begin{figure}[H]
    \centering
    \resizebox{0.4\textwidth}{!}{
    \begin{tikzpicture}[line width=0.3pt]  
    \foreach \i in {0,...,19} {
        \pgfmathsetmacro\xstart{\i/20}
        \pgfmathsetmacro\xend{(\i+1)/20}
        \draw[lightgray] (\xstart,0) -- (\xend,1);
    }
    
    \draw[red] (0,0) -- (1,0.5);
    \draw[red] (0,0.5) -- (1,1);

    \draw[blue] (0, 0.16667) -- (1,0.666);
    \draw[blue] (0,0.666) -- (0.666,1);
    \draw[blue] (0.666,0) -- (1,0.16667);

    \draw[yellow] (0, 0.333) -- (1,0.8333);
    \draw[yellow] (0,0.8333) -- (0.333, 1);
    \draw[yellow] (0.333, 0) -- (1,0.333);

    \draw[black, line width=0.3pt] (0,0) rectangle (1,1);

     \foreach \i in {0,...,39} {
        \pgfmathsetmacro\x{mod(\i/39,1)}
        \pgfmathsetmacro\y{mod((20*\i)/39,1)}
        \fill[darkred] (\x,\y) circle (0.4pt);
    }

    \foreach \i in {0,...,39} {
        \pgfmathsetmacro\x{mod(\i/39 + (1/117),1)}
        \pgfmathsetmacro\y{mod(((20*\i)/39) + (20/117),1)}
        \fill[black] (\x,\y) circle (0.3pt);
    }

    \foreach \i in {0,...,39} {
        \pgfmathsetmacro\x{mod(\i/39 + (2/117),1)}
        \pgfmathsetmacro\y{mod(((20*\i)/39) + (40/117),1)}
        \fill[black] (\x,\y) circle (0.3pt);
    }

\end{tikzpicture}}
    \caption{A figure summarizing Theorem \ref{thm:overlays}. Each set of colored lines represent a copy of $\moon{2}{1}$ rotated around the circle.}
    \label{fig:overlaying}
\end{figure}

\begin{proof}[proof of Theorem \ref{thm:overlays}]
    Recall that $\moon{\alpha}{\beta}$ is given by a line in $\R^2/ \Z^2$ with the following parametric equations.
    \[x = \alpha t, \hspace{10pt} y = \beta t, \hspace{10pt} \text{or} \hspace{10pt} y = \tfrac{\beta}{\alpha}x \hspace{10pt} \text{since } \alpha \neq 0\]
    From the discussion in Section \ref{sec:moon-systems-as-paths}, we know that $\sample{dm}{1,a}$ is an additive group generated by $(\tfrac{1}{dm}, \tfrac{a}{dm})$. We can realize $\sample{m}{1, a}$ as a subgroup of $\sample{dm}{1, a}$ generated by the point $(\frac{d}{dm}, \frac{ad}{dm})~=~(\frac{1}{m}, \frac{a}{m})$. Let $\Sc_0$ be this copy of $\sample{m}{1, a}$. The cosets of this subgroup are given by
    \[\Sc_k = \left\{ \left(\frac{j}{m}, \frac{a j}{m}\right) + \left(\frac{k}{dm}, \frac{ak}{dm}\right) \: \bigg| \: 0 \leq j \leq m -1 \right\}\]
    for $1 \leq k \leq d-1$. We know that $\Sc_0$ lies on $\moon{\alpha}{\beta}$ as given. We then claim that $\Sc_k$ lies on the line given by 
    \begin{equation}\label{eq:line-for-Qk}
        x = \alpha t, \hspace{10pt} y = \beta t + \frac{k}{d\alpha}.
    \end{equation}
    Notice that all the points in $\Sc_k$ are points in $\Sc_0$ translated by the vector $\left(\frac{k}{dm}, \frac{ak}{dm}\right)$ by definition. In particular, this vector is the same for all points in $\Sc_0$. Then, since all the points in $\Sc_0$ are colinear, and translating a line leaves the slope invariant, we conclude that $\Sc_k$ lies on a line of the form
    \[y = \frac{\beta}{\alpha}x + C_k\]
    where $C_k$ is a constant. We then use the Triangle Proportionality theorem to find that the $y$-intercept of the line is $\frac{k}{d\alpha}$ as explained in the caption of Figure \ref{fig:triangle-thm}. 

    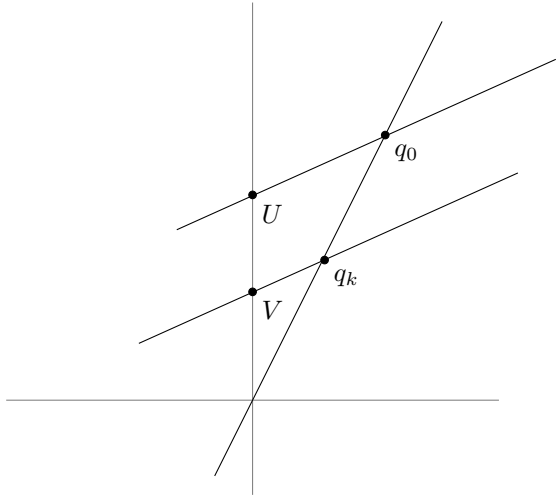
\begin{figure}[H]
        \centering
        \resizebox{0.5\textwidth}{!}{
        \begin{circuitikz}
        \tikzstyle{every node}=[font=\normalsize]
        \draw [color={gray}, short] (6.25,9.75) -- (6.25,3.25);
        \draw [color={gray}, short] (3,4.5) -- (9.5,4.5);
        \draw [short] (5.75,3.5) -- (8.75,9.5);
        \draw [ fill={black} ] (7.2,6.35) circle (0.05cm) node[anchor=north west] {\normalsize $q_k$} ;
        \draw [ fill={black} ] (8,8) circle (0.05cm) node[anchor=north west] {\normalsize $q_0$} ;
        \draw [short] (5.25,6.75) -- (10.25,9);
        \draw [short] (4.75,5.25) -- (9.75,7.5);
        \draw [ fill={black} ] (6.25,7.21) circle (0.05cm) node[anchor=north west] {\normalsize $U$} ;
        \draw [ fill={black} ] (6.25,5.93) circle (0.05cm) node[anchor=north west] {\normalsize $V$} ;
        \end{circuitikz}
        }
        \caption{Let $q_k = (\frac{k}{dm}, \frac{ak}{dm})$ and let $q_0 = (\frac{1}{m}, \frac{a}{m})$. The point $q_{k}$ lies between $(0,0)$ and $q_0$ on the line $y = ax$. The Triangle Proportionality Theorem says that $\frac{\norm{U - V}}{\norm{V}} = \frac{\norm{q_{0} - q_{k}}}{\norm{q_k}}$. Since $U = (0, \frac{1}{\alpha})$, we find that $V = (0, \frac{k}{d\alpha}$).}
    \label{fig:triangle-thm}
\end{figure}
    The last step is to show why the line given by (\ref{eq:line-for-Qk}) is a copy of $\moon{\alpha}{\beta}$ rotated by $\frac{k}{d(\alpha - \beta)}$. We notice that every planet dance $\moon{\alpha}{\beta}$ starts at $t = 0$ with both planets in the same position. This is exactly an intersection with the line $\moon{1}{1}$. But since (\ref{eq:line-for-Qk}) has a non-zero $y$-intercept, the corresponding planets start at different positions at $t = 0$. So the rotation amount of the corresponding planet dance is determined by the point of intersection between the line (\ref{eq:line-for-Qk}) with $\moon{1}{1}$. We solve for such a $t$.
    \begin{align*}
        \alpha t &= \beta t + \frac{k}{d\alpha} \\
        (\alpha - \beta)t &= \frac{k}{d\alpha} \\
        t &= \frac{k}{d\alpha(\alpha - \beta)}
    \end{align*}
    And then we find $x = y = \frac{k}{d(\alpha - \beta)}$ at this value for $t$.
\end{proof}

Now we apply this general theory of overlays to extend Corollary \ref{cor:upper-odd-table-reduced} to cases where $\gcd(b,r) > 1$. We give an explicit description for modular stitch graphs of the form $\MMT(m, \lceil \frac{m}{b} \rceil)$ and $\MMT(m, \lfloor \frac{m}{b} \rfloor)$. 

\begin{prop}\label{prop:odd-tables-non-reduced}
    Let $m$ and $b$ be positive integers such that $b < m$ and $b \nmid m$. Let $m \equiv r \mod b$ for $0 < r < b$ and let $d = \gcd(b,r)$. Then $\MMT(m, \lceil \frac{m}{b} \rceil)$ and $\MMT(m, \lfloor \frac{m}{b}\rfloor)$ are each composed of $d$ sets $\Sc_0, \dots \Sc_{d-1}$ each consisting of $\frac{m}{d}$ chords.
    \begin{itemize}
        \item For $\MMT(m, \lceil \frac{m}{b} \rceil)$, each $\Sc_k$ lies on a copy of $\moon{\frac{b}{d}}{\frac{b-r}{d}}$ rotated by $\frac{k}{r}$ around the circle.
        \item For $\MMT(m, \lfloor \frac{m}{b} \rfloor)$, each $\Sc_k$ lies on a copy of $\moon{\frac{b}{d}}{\frac{-r}{d}}$ rotated by $\frac{k}{r}$ around the circle.
    \end{itemize}
\end{prop}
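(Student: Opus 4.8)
The plan is to read Proposition \ref{prop:odd-tables-non-reduced} off from Theorem \ref{thm:overlays} by feeding in the correct reduced planet dance in each case. Write $m = qb + r$ with $0 < r < b$, so that $\lceil \frac{m}{b}\rceil = q+1$ and $\lfloor \frac{m}{b}\rfloor = q$. A quick rearrangement gives the two integral aliasing relations
\[
b\left\lceil \tfrac{m}{b}\right\rceil - (b-r) = m \qquad\text{and}\qquad b\left\lfloor \tfrac{m}{b}\right\rfloor + r = m,
\]
which, in the notation $m = |\alpha a - \beta|$ of Proposition \ref{prop:aliasing-planet-dances}, say that $\moon{1}{a}$ aliases $\moon{b}{b-r}$ (ceiling, $\beta = b-r$) and $\moon{b}{-r}$ (floor, $\beta = -r$).

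Since $\gcd(b, b-r) = \gcd(b, r) = d$, these dances are not reduced when $d > 1$; their reduced forms are $\moon{b/d}{(b-r)/d}$ and $\moon{b/d}{-r/d}$, and $d \mid m$ because $d$ divides both $b$ and $r$. For the ceiling case I would set $\alpha = b/d$ and $\beta = (b-r)/d$; then $\gcd(\alpha,\beta)=1$, $\alpha \neq 0$, and $|\alpha a - \beta| = m/d$. The floor case is identical with $\beta = -r/d$. Because $d \cdot (m/d) = m$ and $\MMT(m,a) = \sample{m}{1,a}$ by Lemma \ref{lem:fundamental-correspondence}, this is precisely the setup of Theorem \ref{thm:overlays} with its ``$m$'' taken to be $m/d$ and multiplicity $d$.

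Applying Theorem \ref{thm:overlays} then hands us the decomposition $\MMT(m,a) = \sample{m}{1,a} = \Sc_0 \sqcup \cdots \sqcup \Sc_{d-1}$ with $|\Sc_k| = m/d$, and tells us that $\Sc_k$ lies on the reduced dance rotated by $\frac{k}{d(\alpha - \beta)}$. In the ceiling case $\alpha - \beta = \frac{b-(b-r)}{d} = \frac{r}{d}$, so $d(\alpha-\beta) = r$ and the rotation is $\frac{k}{r}$, exactly as stated. The remaining bookkeeping---$\gcd(b,b-r)=\gcd(b,r)$, $d \mid m$, the coset count, and the identification of $\Sc_k$ with ``initial point $\equiv k \bmod d$'' matching Figures \ref{fig:reducing tables 6-3 ex} and \ref{fig:reducing tables 6-2 ex}---is routine.

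I expect the floor-case rotation to be the one delicate point. There $\alpha - \beta = \frac{b-(-r)}{d} = \frac{b+r}{d}$, so the formula of Theorem \ref{thm:overlays} yields rotation $\frac{k}{b+r}$ rather than $\frac{k}{r}$. Indeed, for $\MMT(206,34)$ (here $b=6$, $r=2$, $d=2$) one can check directly that the odd coset sits on $\moon{3}{-1}$ rotated by $\frac18 = \frac{1}{b+r}$ and not by $\frac12 = \frac{1}{r}$. So the final step of the proof would be to recompute the floor rotation carefully and reconcile it with the statement---either reporting it as $\frac{k}{b+r}$ or isolating a normalization of ``rotation around the circle'' under which it reads $\frac{k}{r}$.
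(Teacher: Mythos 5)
Your argument is the paper's own proof: the paper also writes $a\bigl(\frac{b}{d}\bigr) - \bigl(\frac{b-r}{d}\bigr) = \frac{m}{d}$, invokes Proposition \ref{prop:aliasing-planet-dances} to get $\sample{m/d}{1,a} = \sample{m/d}{\frac{b}{d}, \frac{b-r}{d}}$, and then applies Theorem \ref{thm:overlays} with multiplicity $d$, exactly as you do. For the ceiling case your bookkeeping matches the paper's line for line, including $d(\alpha - \beta) = r$.

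The ``delicate point'' you flag at the end is real, and the paper does not resolve it: its proof treats only the ceiling case and dismisses the floor case with ``follows similarly,'' so the rotation is never recomputed there. Your computation is right. With $\alpha = \frac{b}{d}$, $\beta = -\frac{r}{d}$ the formula of Theorem \ref{thm:overlays} gives $\frac{k}{d(\alpha - \beta)} = \frac{k}{b+r}$, and your check on $\MMT(206,34)$ (coset on $\moon{3}{-1}$ rotated by $\frac{1}{8}$, not $\frac{1}{2}$) is correct. Nor can a normalization of ``rotation'' rescue the stated $\frac{k}{r}$: the chord family $\moon{\alpha}{\beta}$ is invariant exactly under rotations by multiples of $\frac{1}{|\alpha - \beta|} = \frac{d}{b+r}$, and
\[
\frac{k}{r} - \frac{k}{b+r} = \frac{kb}{r(b+r)}
\]
is a multiple of $\frac{d}{b+r}$ only when $dr' \mid kb'$ (writing $b = db'$, $r = dr'$), which fails already for $k=1$ in your example ($\frac12 - \frac18 = \frac38$ is not a multiple of $\frac14$). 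So the floor bullet of the proposition as printed is in error and should read ``rotated by $\frac{k}{b+r}$''; your proposal, carried out as you describe, is the corrected proof.
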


\begin{proof}
    We prove this result for a graph of the form $\MMT(m, \lceil \frac{m}{b} \rceil)$. The proof for $\MMT(m, \lfloor \frac{m}{b} \rfloor)$ follows similarly.

    Note that $d = \gcd(r, b) = \gcd(m, b)$ so we can write $m = dm'$ for $m' \in \Z^{>0}$. Then we rewrite $a = \lceil \frac{m}{b} \rceil$.
    \[a = \frac{dm' + (b - r)}{b} = \frac{m' + \frac{b-r}{d}}{\frac{b}{d}}\]
    So we can write the relation
    \[a\left(\frac{b}{d}\right) - \left(\frac{b-r}{d}\right) = m'\]
    where all terms are integers. Next, we apply Corollary \ref{cor:upper-odd-table-reduced} and the preceding discussion to acquire $\sample{m'}{1, a} = \sample{m'}{\frac{b}{d}, \frac{b-r}{d}}$. Then we apply Theorem \ref{thm:overlays} to get the desired result for $\sample{dm'}{1, a} = \sample{m}{1, a} = \MMT(m, a)$.
\end{proof}

\section{Wrapping up and moving forward}
We will take a moment to recall where we have been and fit the pieces together. In Section 1, we introduced modular stitch graphs and asked the Big Question:
\begin{quote}
    \centering \emph{Given values for $m$ and $a$, what design will $\MMT(m, a)$ create?}
\end{quote}
In Section 2 we defined planet dances and we saw that modular stitch graphs are discrete samplings of these objects. We established a fundamental correspondence in Lemma \ref{lem:fundamental-correspondence} which gives a planet dance for each modular stitch graph. However, sometimes this correspondence doesn't capture the visual design of the modular stitch graph. We solved this conundrum with planet dance intersection in Section 3. We saw that if two planet dances intersect $m$ times, they will do so at regular intervals and an $m$-sampling of each will look identical. At this point, we have reframed our Big Question. 
\begin{quote}
    \centering \textit{A modular stitch graph is most naturally sampled from which planet dance?}
\end{quote}
By drawing planet dances on a flat torus, we find the answer. After graphing the chords of $\MMT(m,a)$ as points in the torus, we look for the closest sample point to $(0,0)$ given by $(\frac{\alpha}{m}, \frac{a\alpha}{m})$ with coordinates taken modulo 1. Then we connect $(0,0)$ to this point with the shortest path on the square torus and extend the line. This line will have a rational slope given by $\frac{\beta}{\alpha}$ for some $\beta \in \Z$ so it will form a closed loop and it will pass through all the points $(\frac{\alpha k}{m}, \frac{a(\alpha k)}{m})$ by construction. As we saw in Section \ref{sec:planet-dance-aliasing}, if $\gcd(\alpha, m) = 1$, then these will be all the points of $\MMT(m,a)$. However, if $\gcd(\alpha, m) = d > 1$, then it will pass through exactly $\frac{m}{d}$ of them. Letting $m = dm'$, they will be the points of the form $(\frac{dk}{m}, \frac{a (dk)}{m}) = (\frac{k}{m'}, \frac{ak}{m'})$. Now we have reached the discussion from Section \ref{sec:table-of-tables} in Theorem \ref{thm:overlays}. The $m'$-sampling $\sample{m'}{1, a}$ lies on the line $\moon{\alpha}{\beta}$ in $\T^2$. So the original modular stitch graph $\MMT(m,a) = \sample{dm'}{1,a}$ is composed of $d$ sets of chords which each lie on a copy of $\moon{\alpha}{\beta}$ rotated around the circle. With this, we have a complete answer to our Big Question.

All this mathematical theory may obscure the simplicity of the answer. At the end of the day, we have only to connect the dots. Given a modular stitch graph $\MMT(m,a)$, we can simply graph the set of chords as points in the square torus. And then, the most natural way to connect the dots will produce the planet dance that most closely visually represents the design of $\MMT(m,a)$.

To close, let's look back with fresh eyes at the menagerie of modular stitch graphs from Figure \ref{fig:examples-of-MMTs}. Figures 17-24 each show a modular stitch graph from Figure \ref{fig:examples-of-MMTs}, together with its sampled planet dance. Now it's up to you to connect the dots!

\begin{figure}[H]
    \centering
    \includegraphics[scale=0.4]{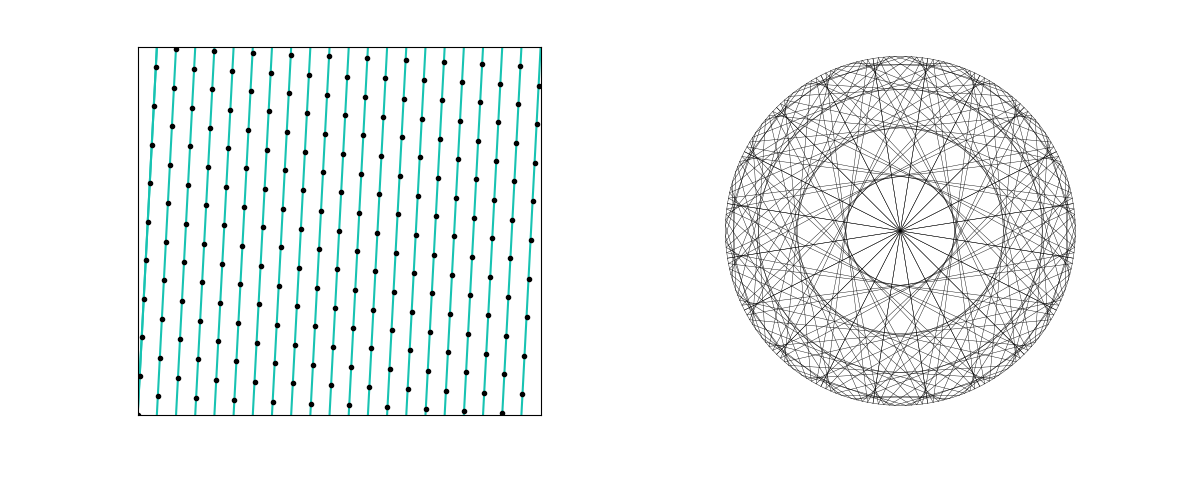}
    \caption{The planet dance $\moon{1}{21}$ sampled at rate 200 along with $\MMT(200, 21)$}
\end{figure}

\begin{figure}[H]
    \centering
    \includegraphics[scale=0.4]{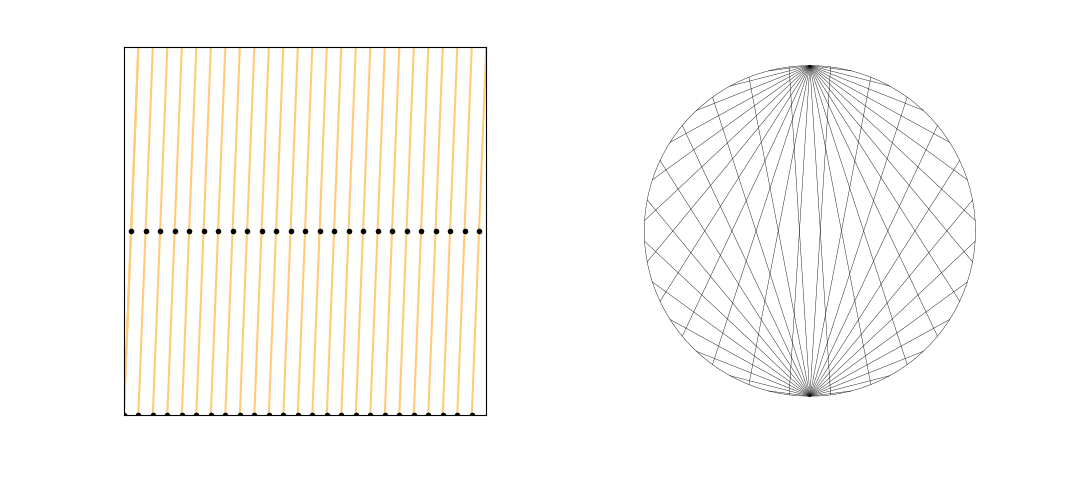}
    \caption{The planet dance $\moon{1}{25}$ sampled at rate 50 along with $\MMT(50, 25)$}
\end{figure}

\begin{figure}[H]
    \centering
    \includegraphics[scale=0.4]{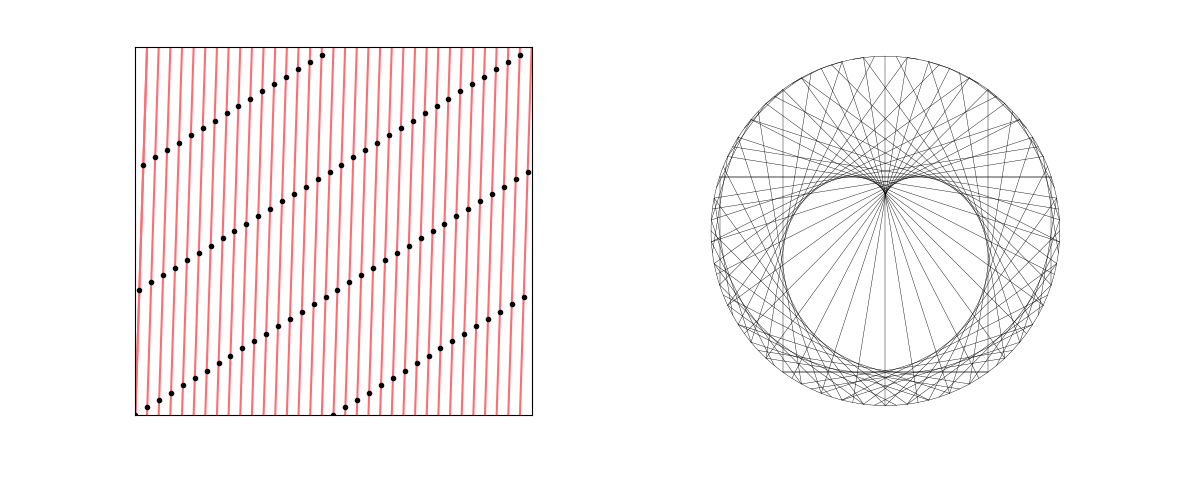}
    \caption{The planet dance $\moon{1}{34}$ sampled at rate 100 along with $\MMT(100, 34)$}
\end{figure}

\begin{figure}[H]
    \centering
    \includegraphics[scale=0.4]{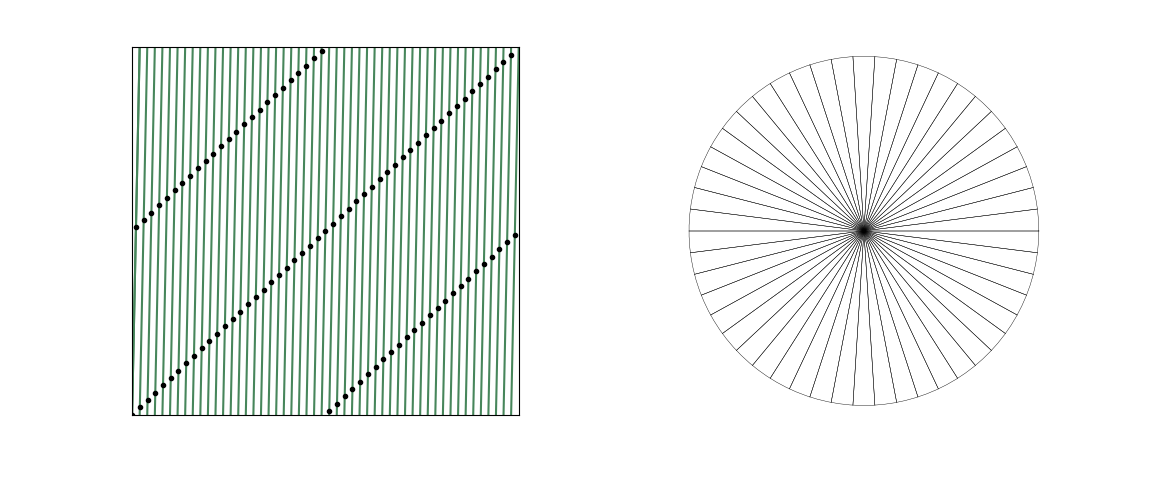}
    \caption{The planet dance $\moon{1}{51}$ sampled at rate 100 along with $\MMT(100, 51)$}
\end{figure}

\begin{figure}[H]
    \centering
    \includegraphics[scale=0.4]{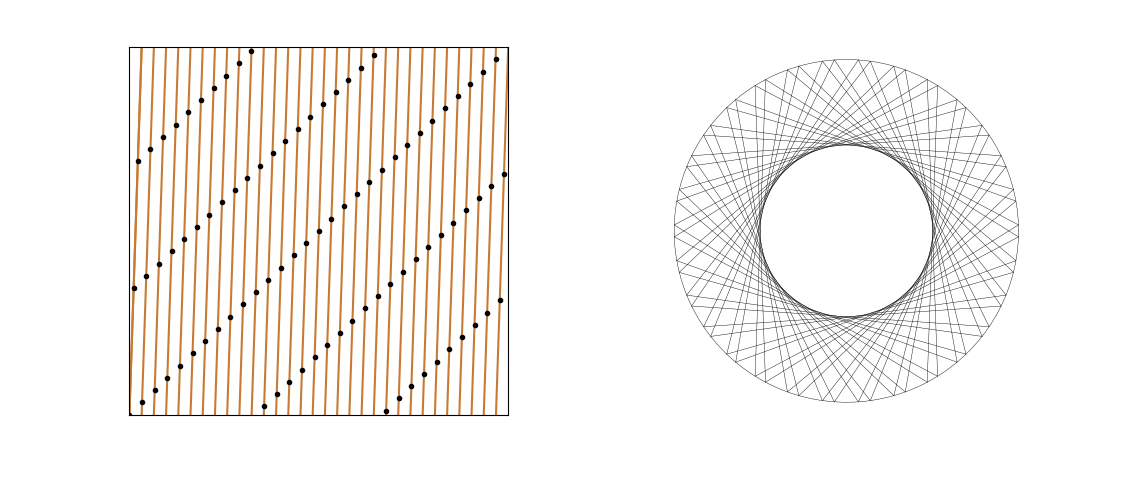}
    \caption{The planet dance $\moon{1}{31}$ sampled at rate 90 along with $\MMT(90, 31)$}
\end{figure}

\begin{figure}[H]
    \centering
    \includegraphics[scale=0.4]{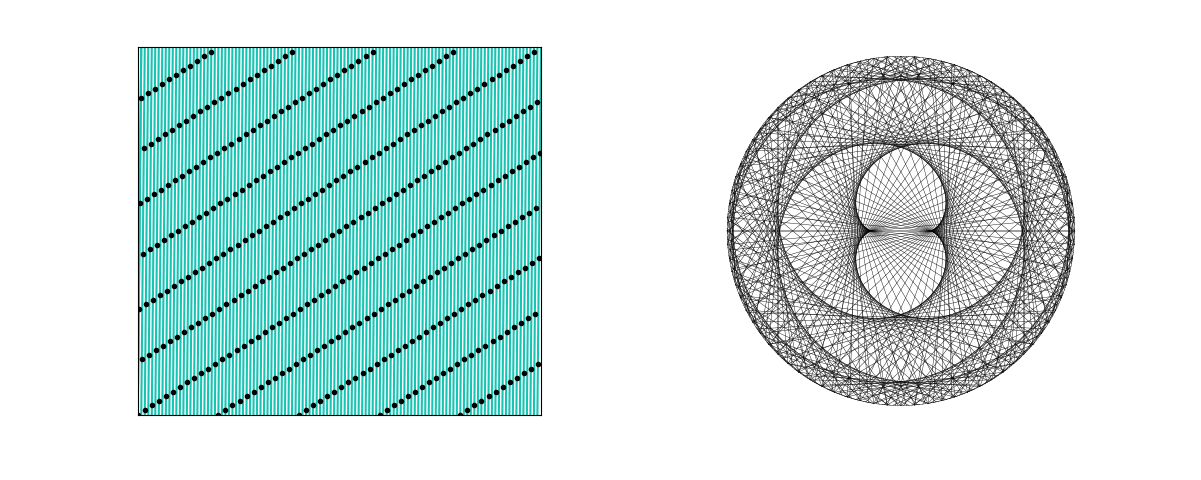}
    \caption{The planet dance $\moon{1}{115}$ sampled at rate 400 along with $\MMT(400, 115)$}
\end{figure}

\begin{figure}[H]
    \centering
    \includegraphics[scale=0.4]{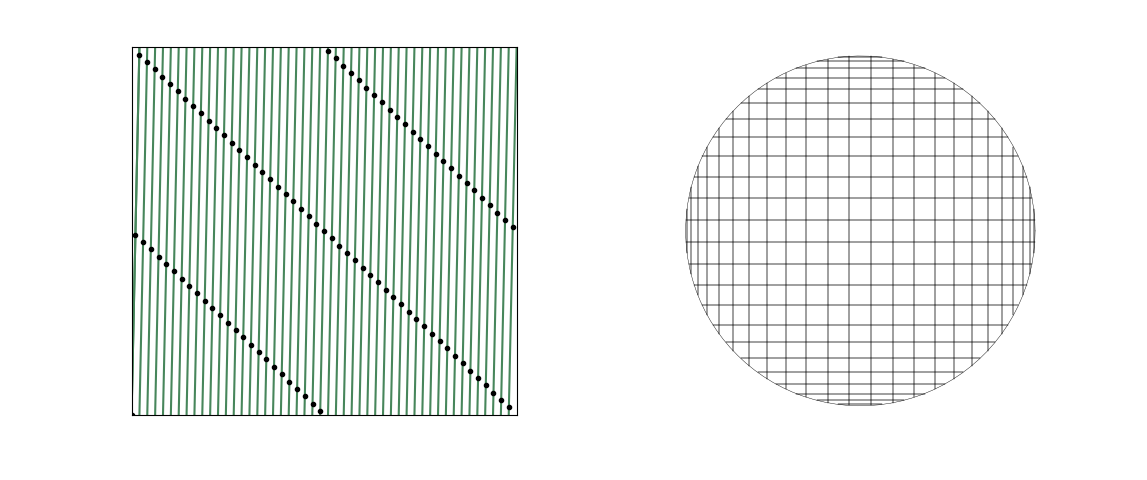}
    \caption{The planet dance $\moon{1}{49}$ sampled at rate 100 along with $\MMT(100, 49)$}
\end{figure}

\begin{figure}[H]
    \centering
    \includegraphics[scale=0.4]{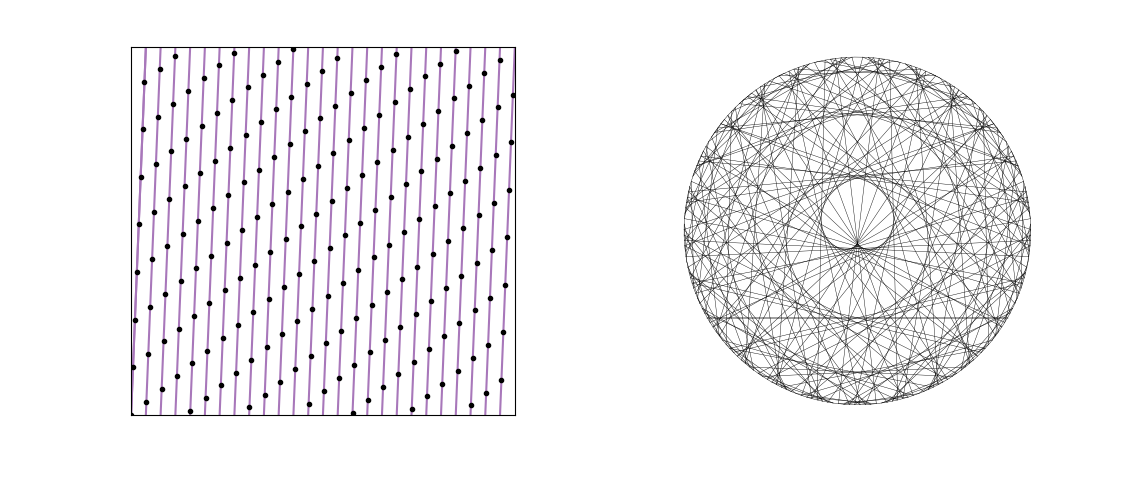}
    \caption{The planet dance $\moon{1}{26}$ sampled at rate 201 along with $\MMT(201, 26)$}
\end{figure}

\section*{Acknowledgments}
This article evolved through many helpful conversations with friends. First, I thank Jeffrey Wack for a pivotal discussion which provided the key insight. I also thank Jayadev Athreya, professor of mathematics, University of Washington, for his support through the writing process. Lastly, I thank Elliot Kienzle who provided invaluable feedback, unwavering encouragement, and boundless excitement for these pretty pictures. I would also like to thank the reviewers for their comments and revisions which much improved the quality of this article. 

I dedicate this article to my father, Alan Herr. He was the first person to whom I showed the ``grid of graphs''. Dad, you will always be my inspiration.
\pagebreak

\nocite{*}
\bibliography{bib}
\bibliographystyle{plain}

\end{document}